\documentclass[12pt, reqno]{amsart}
\usepackage{times,amsmath,amssymb,amsthm,amscd, mathrsfs, color}
\usepackage[all]{xy}

\newtheorem{thm}{Theorem}[section]
\newtheorem{cor}[thm]{Corollary}
\newtheorem{lem}[thm]{Lemma}
\newtheorem{pro}[thm]{Proposition}

\theoremstyle{definition}

\numberwithin{equation}{section}

\newcommand{\R}{{\mathbb R}}

\newcommand{\D}{{\mathbb D}}

\begin{document}

\title[]
{Compact differences of composition operators on large weighted Bergman spaces}

\thanks{The author was supported by the NRF (No. 2018R1D1A1B07046890) of Korea}
\subjclass[2010]{}
\keywords{exponential type weight, composition operator, compact difference, radius function, Riemannian distance}

\author[I. Park]{Inyoung Park}
\address{Department of Mathematics Korea University, Seoul, 02841, Republic of Korea}
\email{iypark26@gmail.com}

\begin{abstract}
While there have been extensive studies regarding the theory of composition operators in standard Bergman spaces, there have not been many results pertaining to large Bergman spaces due to a lack of useful tools. In this paper, we give the characterizations of the compact differences of composition operators in Bergman spaces with the exponential type weight using a newly defined Riemannian distance. Furthermore, we give a sufficient condition for the question when two composition operators lie in the same component.
\end{abstract}

\maketitle

\section{Introduction}
Let $\phi$ be a holomorphic self-map of the unit disk $\D$. The composition operator $C_\phi:H(\D)\rightarrow H(\D)$ is defined by $C_\phi f:=f\circ\phi$, where $H(\D)$ is the space of holomorphic functions on $\D$. For the integrable radial function $\omega$, let $L^p(\omega dA)$ be the space of all measurable functions $f$ on $\D$ such that

\begin{align*}
\|f\|^p_{p}:=\int_\D |f(z)|^p\omega(z)dA(z)<\infty,\quad0<p<\infty,
\end{align*}

where $dA(z)$ is the normalized area measure on $\D$. We denote $A^p(\omega)=L^p(\omega dA)\cap H(\D)$ and we use the notation $A^p_\alpha(\D)$ when $\omega(z)=(1-|z|)^\alpha$, $\alpha>-1$. Throughout this paper, we consider radial weights of the form $\omega(r)=e^{-\varphi(r)}$ where $\varphi(r)=-A(r)\log(1-r)$ with $A(r)$ is non-decreasing and $A(r)\rightarrow\infty$ as $r\rightarrow1^-$ and $A(0)\neq0$. We let $\varphi\in\mathcal{C}^2$,
\begin{align}\label{taudef}
(\Delta\varphi(z))^{-\frac{1}{2}}\asymp\tau(z),
\end{align}
and we assume that $\tau(r)$ and $-\tau'(r)$ decrease to $0$ near $1$. Furthermore, we use the peak functions given by \cite{BDK} under the additional following conditions to obtain a necessary condition of our main theorems:
\begin{align}\label{addcondit}
\lim_{r\rightarrow1^-}\tau'(r)\log\frac{1}{\tau(r)}=0\quad\text{or}\quad\lim_{r\rightarrow1^-}\frac{\tau(r)}{(1-r)^{A}}=\infty
\end{align}
for $A>0$. Now, we say that the weight $\omega$ belongs to the class $\mathcal{W}$ if it satisfies all the conditions above. Concerning the standard weight $\varphi(z)=-\alpha\log(1-|z|)$, $\alpha>-1$, we can easily check that $\tau'(r)\nrightarrow0$ as $r\rightarrow1^-$, thus the weight class $\mathcal{W}$ does not contain standard weights and is composed of regular fast weights. A typical weight example in the class $\mathcal{W}$ is $\omega(z)=e^{-\frac{1}{1-|z|}}$, and the reader can refer to \cite{PP} for other examples belonging to $\mathcal{W}$.\\
\indent It is well known that all composition operators are bounded in standard Bergman spaces $A^p_\alpha(\D)$ owing to the Littlewood subordination principle. The compactness characterization of $C_\phi$ on $A^p_\alpha(\D)$ is also well known by the non-existence of angular derivatives of the inducing function $\phi$ of \cite{MS}. On the other hand, Kriete and MacCluer showed that not every composition operator is bounded in Bergman space with $\omega\in\mathcal{W}$ in \cite{KM}. They showed that if
\begin{align}\label{suffibddwithoutU}
\limsup_{|z|\rightarrow1}\frac{\omega(z)}{\omega(\phi(z))}<\infty,
\end{align}
then $C_\phi$ is bounded by $A^2(\omega)$. Furthermore, \cite{KM} gives a characterization with respect to the angular derivative (see Section 2.4 for a definition) as follows: $\phi$ induces an unbounded composition operator $C_\phi$ on $A^2(\omega)$ if there is a boundary point, $\zeta$, such that $|\phi'(\zeta)|<1$. Recently, we have shown that (\ref{suffibddwithoutU}) is an equivalent condition for the boundedness of $C_\phi$ on $A^p(\omega)$, where $0<p<\infty$ in \cite{P} when $\omega$ belongs to the class $\mathcal{W}$. Thus, the boundedness of $C_\phi$ on $A^2(\omega)$ implies the boundedness of $C_\phi$ on $A^p(\omega)$ for the full range of $p$. In the following theorem, they gave equivalent conditions for the compactness of composition operators on $A^2(\omega)$.
\begin{thm}\cite{KM}\label{knownresult2}
Let $\omega$ belong to the class $\mathcal{W}$. The following conditions are equivalent:
\begin{itemize}\label{angulcompact}
\item[(1)]$C_\phi$ is compact on $A^2(\omega)$.
\item[(2)]$\lim_{|z|\rightarrow1}\frac{\omega(z)}{\omega(\phi(z))}=0$.
\item[(3)]$|\phi'(\zeta)|>1$ for all $\zeta$ in $\partial\D$.
\end{itemize}
\end{thm}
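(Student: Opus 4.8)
The plan is to decouple the operator-theoretic equivalence $(1)\Leftrightarrow(2)$ from the function-theoretic equivalence $(2)\Leftrightarrow(3)$. The whole argument rests on two structural facts available for $\omega\in\mathcal{W}$: the reproducing kernel norm estimate
\[
\|K_w\|^2=K_w(w)\asymp\frac{1}{\omega(w)\tau(w)^2},
\]
and the subharmonic pointwise estimate
\[
|f(w)|^2\,\omega(w)\,\tau(w)^2\lesssim\int_{D(w,\delta\tau(w))}|f|^2\,\omega\,dA,
\]
in which the radius function $\tau$ from (\ref{taudef}) plays the role that $1-|z|$ plays in the standard weighted theory.

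For $(1)\Rightarrow(2)$ I would test compactness against the normalized reproducing kernels $k_w=K_w/\|K_w\|$, which tend to $0$ weakly as $|w|\to1$. Since $C_\phi^\ast K_w=K_{\phi(w)}$, compactness forces $\|C_\phi^\ast k_w\|\to0$, and the kernel estimate gives
\[
\|C_\phi^\ast k_w\|^2=\frac{K_{\phi(w)}(\phi(w))}{K_w(w)}\asymp\frac{\omega(w)}{\omega(\phi(w))}\cdot\frac{\tau(w)^2}{\tau(\phi(w))^2}.
\]
It then remains to absorb the factor $\tau(w)^2/\tau(\phi(w))^2$, which I expect to handle using the boundedness of $C_\phi$ (guaranteed by (\ref{suffibddwithoutU})) together with the monotonicity of $\tau$ and $-\tau'$ in $\mathcal{W}$, showing that this slowly varying factor cannot offset the exponential size of $\omega(w)/\omega(\phi(w))$, whence $(2)$. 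For the converse $(2)\Rightarrow(1)$ I would write $\|C_\phi f\|^2=\int_\D|f|^2\,d\mu$, where $\mu$ is the pushforward of $\omega\,dA$ under $\phi$, so that compactness is equivalent to $\mu$ being a vanishing Carleson measure for $A^2(\omega)$; concretely, taking a bounded sequence $f_n\to0$ uniformly on compacta, I would split $\int_\D=\int_{|z|\le r_0}+\int_{|z|>r_0}$, kill the first piece by uniform convergence since $\phi$ maps $\overline{D(0,r_0)}$ into a compact subset of $\D$, and dominate the second piece by applying the pointwise estimate at $\phi(z)$ and exploiting the smallness of $\omega(z)/\omega(\phi(z))$ supplied by $(2)$.

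For $(2)\Leftrightarrow(3)$ I would write $\omega(z)/\omega(\phi(z))=e^{\varphi(\phi(z))-\varphi(z)}$, so that $(2)$ is equivalent to $\varphi(z)-\varphi(\phi(z))\to+\infty$ as $|z|\to1$. The Julia--Carath\'eodory theorem identifies $\liminf_{z\to\zeta}\frac{1-|\phi(z)|}{1-|z|}=|\phi'(\zeta)|\ge1$ at each boundary point, and I would expand $\varphi(z)-\varphi(\phi(z))$ through $\varphi(r)=-A(r)\log(1-r)$ in terms of this ratio. The difference diverges to $+\infty$ exactly when the ratio stays bounded away from $1$ or is unbounded, that is, when $|\phi'(\zeta)|>1$ for every $\zeta$; in the borderline case $|\phi'(\zeta)|=1$ the leading terms cancel and $(2)$ fails, which is what forces the strict inequality in $(3)$. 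The peak functions of \cite{BDK}, available under (\ref{addcondit}), would be used to turn this borderline heuristic into a genuine non-compactness test sequence.

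The main obstacle is the passage $(2)\Leftrightarrow(3)$: one must convert the precise rate of approach $\frac{1-|\phi(z)|}{1-|z|}\to d$ into sharp asymptotics for $\varphi(z)-\varphi(\phi(z))$ through the growth function $A(r)$ and the radius function $\tau$, and the conditions (\ref{addcondit}) are exactly what make the requisite peak-function sequences available so that the case $|\phi'(\zeta)|=1$ can be excluded rigorously. A secondary, purely technical difficulty is the elimination of the spurious factor $\tau(w)^2/\tau(\phi(w))^2$ in the kernel computation of $(1)\Rightarrow(2)$.
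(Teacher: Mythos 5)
The paper itself contains no proof of this theorem: it is quoted from Kriete--MacCluer \cite{KM}, and only fragments are re-derived elsewhere in the text (Lemma \ref{easypart} with $U\equiv1$ gives $(2)\Rightarrow(1)$; Lemma \ref{d>1} gives the contrapositive of $(2)\Rightarrow(3)$; the remaining implication is delegated to \cite{P}). Your architecture --- normalized kernels for $(1)\Rightarrow(2)$, vanishing Carleson measures for $(2)\Rightarrow(1)$, and Julia--Carath\'eodory asymptotics of $\varphi(z)-\varphi(\phi(z))$ for $(2)\Leftrightarrow(3)$ --- is the standard route and is sound in outline.

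Two steps need more than you have indicated. First, in $(1)\Rightarrow(2)$ the factor $\tau(w)^2/\tau(\phi(w))^2$ is not absorbed by ``slow variation'' of $\tau$ alone; the actual mechanism is that $r\mapsto\omega(r)/\tau(r)$ is eventually decreasing (this is the paper's Lemma \ref{delight}), which gives $\tau(z)/\tau(\phi(z))\geq\min\bigl(1,\omega(z)/\omega(\phi(z))\bigr)$ near the boundary; hence a subsequence with $\omega(z_n)/\omega(\phi(z_n))\geq c$ keeps the whole kernel quotient bounded below by $c\min(1,c)^2$. Second, in $(2)\Rightarrow(1)$, dominating the outer integral ``by the pointwise estimate at $\phi(z)$'' leaves the non-integrable factor $\tau(\phi(z))^{-2}$, and pointwise smallness of $\omega(z)/\omega(\phi(z))$ does not pay for it; you must instead verify the vanishing Carleson condition box by box, using $\omega/\omega\circ\phi\leq\epsilon^{1/2}\left(\omega/\omega\circ\phi\right)^{1/2}$ together with the boundedness of $C_\phi$ on $A^2(\omega^{1/2})$ supplied by (\ref{suffibddwithoutU}), exactly as in Lemma \ref{easypart}. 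Finally, the peak functions of \cite{BDK} and the hypotheses (\ref{addcondit}) are not needed anywhere in this theorem: the borderline case $|\phi'(\zeta)|=1$ is excluded purely function-theoretically, since $\beta_\phi(\zeta)\leq1$ forces $|\phi(r\zeta)|\geq r$ and hence $\omega(r\zeta)/\omega(\phi(r\zeta))\geq1$ along the radius (Lemma \ref{d>1}); conversely, writing $1-|\phi(z)|=\lambda(1-|z|)$ one has $\varphi(|z|)-\varphi(|\phi(z)|)=\bigl(A(|\phi(z)|)-A(|z|)\bigr)\log(1-|z|)+A(|\phi(z)|)\log\lambda\geq A(|\phi(z)|)\log\lambda$, which diverges whenever $\liminf\lambda>1$, giving $(3)\Rightarrow(2)$ without any test functions.
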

 The purpose of this paper is to study when the difference of two composition operators is compact on $A^p(\omega)$, $0<p<\infty$. This problem was originally raised from the question of the component in the space of composition operators on $H^2$ under the operator norm topology in \cite{SS}. \cite{SS} suggested the following conjecture; The set of all composition operators that differ from a compact operator forms a component of the bounded composition operator space $\mathcal{C}(H^2)$. Although this turned out to be false, the study of compact differences has continued regarding various holomorphic function spaces since it can give a partial answer to their question. Afterwards, \cite{MOZ} showed that the compactness of $C_\phi-C_\psi$ acting on $H^\infty$ is characterized in terms of the pseudo-hyperbolic distance between $\phi$ and $\psi$, i.e., $\rho(\phi,\psi)=\left|\frac{\phi-\psi}{1-\phi\overline{\psi}}\right|$. Using the pseudo-hyperbolic metric, Moorhouse characterized the compact differences of composition operators acting on $A^2_\alpha(\D)$ by the angular derivative cancellation property of \cite{M} as follows; $C_\phi-C_\psi$ is compact on $A^2_\alpha(\D)$ if and only if

\begin{align}\label{standardcondi}
\lim_{|z|\rightarrow1}\rho(\phi(z),\psi(z))\left(\frac{1-|z|^2}{1-|\phi(z)|^2}+\frac{1-|z|^2}{1-|\psi(z)|^2}\right)=0.
\end{align}

In this paper, we show that the angular derivative cancellation property also occurs in the case of large Bergman spaces. More precisely, even though either $|\phi'(\zeta)|=1$ or $|\psi'(\zeta)|=1$, $C_\phi-C_\psi$ can be compact on $A^p(\omega)$ when the distance between their images of $\phi$ and $\psi$ are sufficiently close to the boundary point $\zeta$. In \cite{M}, we found that the pseudo-hyperbolic distance plays a key role in Bergman spaces with weights of the form $\omega(z)=(1-|z|)^\alpha$, $\alpha>-1$, but it is not applicable when using weights of the form $\omega(z)=e^{-\varphi(z)}$. Instead, we use the Riemannian distance induced by the metric $\frac{1}{\tau(z)^2}(dx^2+dy^2)$: For $z,w\in\D$,

\begin{align*}
d_\tau(z,w):=\inf_\gamma\int^1_0\frac{|\gamma'(t)|}{\tau(\gamma(t))}dt,
\end{align*}
where the infimum is taken over all piecewise smooth curves $\gamma$ connecting $z$ and $w$. Moreover, to assure the boundedness of composition operators in $\mathcal{C}(A^p(\omega))$, we need another distance induced by the metric $\varphi'(|z|)^2(dx^2+dy^2)$: For $z,w\in\D$,

\begin{align}\label{verphi}
d_\varphi(z,w):=\inf_\gamma\int^1_0\varphi'(|\gamma'(t)|)|\gamma'(t)|dt.
\end{align}
We will study for details in Proposition \ref{impot}. From (\ref{lowerbd}) of Lemma \ref{key-1}, we see that the distance $d_\varphi$ is also complete since we have the following relation:
\begin{align*}
d_\varphi(z,w)\gtrsim d_\tau(z,w)\gtrsim d_h(z,w)
\end{align*}
where $d_h(z,w)$ is the hyperbolic distance associated with the Bergman metric.
Now, we state our first main result:
\begin{thm}\label{m-thm1}
Let $\omega$ belong to the class $\mathcal{W}$ and $C_\phi$, $C_\psi$ be bounded on $A^p(\omega)$ for some $p\in(0,\infty)$. If there is $R>0$ such that $d_\varphi(\phi(z),\psi(z))<R$ for all $z\in\D$ and
\begin{align}\label{equiv}
\lim_{|z|\rightarrow1^-}\rho_\tau(\phi(z),\psi(z))\left(\frac{\omega(z)}{\omega(\phi(z))}+\frac{\omega(z)}{\omega(\psi(z))}\right)=0,
\end{align}
where $\rho_\tau(z,w)=1-e^{-d_\tau(z,w)}$, then $C_\phi-C_\psi$ is compact on $A^p(\omega)$. Conversely, if $C_\phi-C_\psi$ is compact on $A^p(\omega)$ then (\ref{equiv}) holds.
\end{thm}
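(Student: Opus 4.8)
The plan is to use the standard sequential criterion for compactness on $A^p(\omega)$: the difference $C_\phi-C_\psi$ is compact if and only if $\|(C_\phi-C_\psi)f_n\|_p\to 0$ whenever $\{f_n\}$ is bounded in $A^p(\omega)$ and $f_n\to 0$ uniformly on compact subsets of $\D$. For the sufficiency direction I would fix such a sequence with $\|f_n\|_p\le 1$ and estimate
\[
\|(C_\phi-C_\psi)f_n\|_p^p=\int_\D|f_n(\phi(z))-f_n(\psi(z))|^p\,\omega(z)\,dA(z),
\]
splitting $\D$ into a compact core $\{|z|\le r_0\}$, on which uniform convergence forces the contribution to $0$, and a boundary annulus $\{|z|>r_0\}$, on which I would invoke (\ref{equiv}).

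On the annulus I would split according to the size of $d_\tau(\phi(z),\psi(z))$. When this distance is small, $\rho_\tau\asymp d_\tau$, and integrating along a $\tau$-geodesic $\gamma_z$ from $\phi(z)$ to $\psi(z)$ gives, by the definition of $d_\tau$,
\[
|f_n(\phi(z))-f_n(\psi(z))|\le\int_0^1|f_n'(\gamma_z(t))|\,\tau(\gamma_z(t))\,\frac{|\gamma_z'(t)|}{\tau(\gamma_z(t))}\,dt\le d_\tau(\phi(z),\psi(z))\sup_{\zeta\in\gamma_z}|f_n'(\zeta)|\tau(\zeta).
\]
The hypothesis $d_\varphi(\phi(z),\psi(z))<R$, combined with the chain $d_\varphi\gtrsim d_\tau$ from the excerpt, pins $\tau$ and $\omega$ to within a fixed factor along $\gamma_z$; the weighted Cauchy estimate $|f_n'(\zeta)|^p\tau(\zeta)^p\omega(\zeta)\lesssim\tau(\zeta)^{-2}\int_{D_\tau(\zeta)}|f_n|^p\omega$, which reflects $\Delta\varphi\asymp\tau^{-2}$, then turns the right-hand side into $\rho_\tau(\phi(z),\psi(z))^p$ times a local average of $|f_n|^p\omega$ near $\phi(z)$, carrying the factor $\omega(z)/\omega(\phi(z))$. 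When $d_\tau$ is large, $\rho_\tau$ is bounded below, so I would instead use $|f_n(\phi)-f_n(\psi)|^p\le 2^p(|f_n(\phi)|^p+|f_n(\psi)|^p)$ together with the pointwise bound $|f_n(a)|^p\omega(a)\lesssim\tau(a)^{-2}\int_{D_\tau(a)}|f_n|^p\omega$; the ratios $\omega(z)/\omega(\phi(z))$ and $\omega(z)/\omega(\psi(z))$, which (\ref{equiv}) sends to $0$, then control this regime. Integrating in $z$ and using the bounded overlap of the $\tau$-disks $D_\tau(\phi(z))$ yields $\|(C_\phi-C_\psi)f_n\|_p\to 0$.

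For the necessity direction I would argue by contraposition. If (\ref{equiv}) fails there are $\delta>0$ and $z_n\to\partial\D$ with $\rho_\tau(\phi(z_n),\psi(z_n))\big(\omega(z_n)/\omega(\phi(z_n))+\omega(z_n)/\omega(\psi(z_n))\big)\ge\delta$; after passing to a subsequence I may assume $\rho_\tau(\phi(z_n),\psi(z_n))\,\omega(z_n)/\omega(\phi(z_n))\ge\delta/2$, and set $a_n=\phi(z_n)$. I would test against the normalized peak functions $k_{a_n}$ of \cite{BDK}, which satisfy $\|k_{a_n}\|_p\asymp 1$, tend to $0$ uniformly on compacta, peak at $a_n$ with a matching derivative scale $|k_{a_n}'(a_n)|\tau(a_n)\asymp|k_{a_n}(a_n)|$, and decay in $d_\tau$ away from $a_n$. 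These properties give, in both the small- and large-$d_\tau$ regimes, the unified lower bound $|k_{a_n}(\phi(z_n))-k_{a_n}(\psi(z_n))|\gtrsim\rho_\tau(\phi(z_n),\psi(z_n))\,|k_{a_n}(a_n)|$. Restricting the integral defining $\|(C_\phi-C_\psi)k_{a_n}\|_p^p$ to a $\tau$-disk about $z_n$ — on which $\phi$ stays within a bounded $\tau$-distance of $a_n$ because $C_\phi$ is bounded — then produces a lower bound proportional to $\rho_\tau(\phi(z_n),\psi(z_n))^p\,\omega(z_n)/\omega(\phi(z_n))\ge c\delta^p$, contradicting $\|(C_\phi-C_\psi)k_{a_n}\|_p\to 0$. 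The hypotheses (\ref{addcondit}) are exactly what make the \cite{BDK} peak functions available with this sharp $d_\tau$-decay.

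The step I expect to be hardest is the small-$d_\tau$ regime of the sufficiency proof: converting the geodesic integral of $|f_n'|\tau$ into a genuine local average of $|f_n|^p\omega$ with the correct single weight ratio requires the subharmonicity and Cauchy estimates to hold uniformly along the entire geodesic $\gamma_z$, not merely at its endpoints, and this uniformity is precisely what the constraint $d_\varphi(\phi(z),\psi(z))<R$ secures by forcing $\omega(\phi(z))\asymp\omega(\psi(z))$ and bounded variation of $\tau$ across $\gamma_z$. On the necessity side, the delicate point is tracking the $\tau$-scaling factors in the peak-function estimate so that the $\rho_\tau$ power — rather than a weaker quantity — survives; this is where the boundedness of $C_\phi$, controlling the distortion of $\tau$-disks under $\phi$, is essential.
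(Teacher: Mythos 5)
Your proposal follows essentially the same route as the paper: for sufficiency you split $\D$ according to the size of $d_\tau(\phi(z),\psi(z))$, treat the close regime with a mean-value/sub-mean-value derivative estimate made uniform along the segment by the hypothesis $d_\varphi(\phi(z),\psi(z))<R$ (the paper's Lemma \ref{inequality} via Proposition \ref{impot}), and treat the far regime by showing $\omega^{-1}[\chi_{E^c}\omega\,dA]\circ\phi^{-1}$ is a vanishing Carleson measure (Lemma \ref{easypart}); for necessity you test against the normalized peak functions of \cite{BDK} centered at $\phi(z_n)$, exactly as in Theorem \ref{mainresult1}. The only cosmetic deviations are that the paper bounds the close-regime integral by $(1-e^{-\epsilon})^{p}\|C_\phi\|$ uniformly in $n$ and lets $\epsilon\to 0$ at the end rather than invoking (\ref{equiv}) there, and that in the necessity argument only the large-$d_\tau$ regime can occur (since the weight ratios are bounded), so the derivative lower bound you posit for the peak functions is never needed.
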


While our approach is based on Moorhouse's method, extra work needs to be completed to deal with the exponential-weights case. Here, we note that our all results are still valid in the case of $A^p_\alpha(\D)$ since
\begin{align*}
\rho_\tau(z,w)\approx\rho(z,w).
\end{align*}
In addition, we give conditions which a single composition operator can be expressed by a finite sum of composition operators modulo compact operators.
\begin{thm}
Let  $\omega\in\mathcal{W}$, and $C_\phi, C_{\phi_1},\ldots, C_{\phi_M}$ be bounded operators on $A^p(\omega)$ for some $p\in(0,\infty)$. We define a set as
\begin{align*}
F(\phi)=\left\{\zeta\in\partial\D:\limsup_{z\to \zeta}\frac{\tau(z)}{\tau(\phi(z))}>0\right\}.
\end{align*}
Suppose $F(\phi_i)\cap F(\phi_j)=\emptyset$ for $i\neq j$ and $F(\phi)=\bigcup^M_{j=1}F(\phi_j)$. If there is $R>0$ such that $d_\varphi(\phi(z),\phi_j(z))<R$ for all $z\in\D$ and each $j=1,\dots,M$  and
\begin{align}\label{equiv1}
\lim_{z\rightarrow\zeta}\rho_\tau(\phi(z),\phi_j(z))\left(\frac{\omega(z)}{\omega(\phi(z))}+\frac{\omega(z)}{\omega(\phi_j(z))}\right)=0,\quad\forall\zeta\in F(\phi_j),
\end{align}
then $C_\phi-\sum^M_{j=1}C_{\phi_j}$ is compact on $A^p(\omega)$ for all $p\in(0,\infty)$. Conversely, if $C_\phi-\sum^M_{j=1}C_{\phi_j}$ is compact on $A^p(\omega)$, then (\ref{equiv1}) holds.
\end{thm}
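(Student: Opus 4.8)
The plan is to reduce the multi-operator statement to the two-operator Theorem~\ref{m-thm1} by a boundary localization argument keyed to the disjointness of the sets $F(\phi_j)$. The structural point is that if $\zeta\in F(\phi_k)$ then $\zeta\notin F(\phi_j)$ for every $j\neq k$, so that near such $\zeta$ only the single map $\phi_k$ can track $\phi$ toward the boundary while every other $\phi_j$ pushes strictly inward in the $\tau$-scale. Concretely, $\zeta\notin F(\phi_j)$ means $\tau(z)/\tau(\phi_j(z))\to0$ as $z\to\zeta$, and through the relation $(\Delta\varphi)^{-1/2}\asymp\tau$ and the estimates of Lemma~\ref{key-1} this forces $\omega(z)/\omega(\phi_j(z))\to0$ there; by a localized form of Theorem~\ref{knownresult2} the operator $C_{\phi_j}$ is then compact in a neighborhood of $\zeta$.

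For sufficiency I would take $\{f_n\}$ bounded in $A^p(\omega)$ with $f_n\to0$ uniformly on compact subsets of $\D$ and estimate $\|(C_\phi-\sum_{j=1}^M C_{\phi_j})f_n\|_p^p$ directly. Choosing disjoint open arcs $V_j\supset F(\phi_j)$ in $\partial\D$, associated sectors $W_j\subset\D$ abutting $V_j$, and a remainder $W_0=\D\setminus\bigcup_j W_j$ meeting $\partial\D$ off $F(\phi)=\bigcup_j F(\phi_j)$, I split the integral over these regions. On $W_0$ we are away from $F(\phi)$ and from every $F(\phi_j)$, so the ratios $\omega(z)/\omega(\phi(z))$ and $\omega(z)/\omega(\phi_j(z))$ tend to $0$ and each localized integral $\int_{W_0}|f_n\circ\phi|^p\omega\,dA$, $\int_{W_0}|f_n\circ\phi_j|^p\omega\,dA$ vanishes. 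On each $W_k$ I rewrite the integrand as
\begin{align*}
f_n(\phi(z))-\sum_{j=1}^M f_n(\phi_j(z))=\bigl(f_n(\phi(z))-f_n(\phi_k(z))\bigr)-\sum_{j\neq k}f_n(\phi_j(z)).
\end{align*}
The first difference is controlled exactly as in Theorem~\ref{m-thm1}: the bound $d_\varphi(\phi(z),\phi_k(z))<R$ gives boundedness of $C_\phi-C_{\phi_k}$ through Proposition~\ref{impot}, and the local hypothesis (\ref{equiv1}) on $W_k$ yields smallness of $\int_{W_k}|f_n\circ\phi-f_n\circ\phi_k|^p\omega\,dA$. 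The remaining terms with $j\neq k$ vanish by the local compactness of $C_{\phi_j}$ noted above, since $W_k$ lies near $F(\phi_k)\subset\partial\D\setminus F(\phi_j)$. Summing over $k$ gives $\|(C_\phi-\sum_j C_{\phi_j})f_n\|_p\to0$.

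For necessity I would work at a fixed $\zeta\in F(\phi_k)$ and argue by contradiction: if (\ref{equiv1}) fails there, pick $z_n\to\zeta$ along which the product stays bounded below. Testing the compact operator $C_\phi-\sum_j C_{\phi_j}$ against the peak functions of \cite{BDK} (available under (\ref{addcondit})) concentrated near $\phi(z_n)$, the local compactness of each $C_{\phi_j}$ with $j\neq k$ makes $C_{\phi_j}f_n$ negligible, so the test reduces precisely to the necessity computation for $C_\phi-C_{\phi_k}$ from Theorem~\ref{m-thm1} and contradicts the assumed failure; hence (\ref{equiv1}) holds on each $F(\phi_j)$.

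I expect the principal obstacle to be the local compactness step: converting the $\tau$-based condition $\tau(z)/\tau(\phi_j(z))\to0$ defining membership in $F(\phi_j)$ into the vanishing of the genuinely two-dimensional weighted integrals $\int_W|f_n\circ\phi_j|^p\omega\,dA$ over a full sector $W$, uniformly rather than merely radially. This demands the comparison $(\Delta\varphi)^{-1/2}\asymp\tau$ together with the sub-mean-value and covering estimates underlying Theorem~\ref{knownresult2}, and the same care is needed to guarantee that the peak functions in the necessity argument are annihilated by the off-diagonal operators $C_{\phi_j}$.
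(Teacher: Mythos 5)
Your reduction to the two-map case is the right instinct, and your necessity argument is essentially the paper's: test against the peak functions $G_{\phi(z_n),b}/\tau(\phi(z_n))^{2/p}$ at a bad sequence $z_n\to\zeta\in F(\phi_k)$ and kill the off-diagonal terms by showing $\omega(\phi(z_n))/\omega(\phi_j(z_n))\to0$ for $j\neq k$ via Lemma \ref{delight} (this is where "negligible" must be made quantitative through the decay estimate (\ref{testcondi}); it is the ratio $|G_{\phi(z_n),b}(\phi_j(z_n))/G_{\phi(z_n),b}(\phi(z_n))|$ that vanishes, not any operator norm). The genuine gap is in your sufficiency argument, at its very first step: you choose pairwise disjoint open arcs $V_j\supset F(\phi_j)$. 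The hypotheses only give that the sets $F(\phi_j)$ are pairwise disjoint; they are defined by a $\limsup$ condition and are in general neither closed nor topologically separated (each set $\{\zeta:\limsup_{z\to\zeta}\tau(z)/\tau(\phi_j(z))\geq c\}$ is closed for $c>0$, but their union over $c>0$ need not be), so one set may accumulate on another and no disjoint open neighborhoods need exist. Everything downstream — the identification $F(\phi)\cap V_k=F(\phi_k)$, the vanishing of the cross terms $f_n\circ\phi_j$ on $W_k$, and the treatment of $W_0$ — depends on this separation, so the localization scheme does not go through as stated.

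The paper avoids the issue by partitioning the \emph{disk} rather than the boundary: it sets $D_i=\{z\in\D:|\phi_i(z)|\geq|\phi_j(z)|\ \forall j\neq i\}$ and $E_i=\{z\in D_i:d_\tau(\phi(z),\phi_i(z))<\epsilon\}$, which requires no topological hypothesis on the $F(\phi_j)$. On $D_i$ the monotonicity of $\omega$ gives $\omega(z)/\omega(\phi_i(z))\geq\omega(z)/\omega(\phi_j(z))$ for $j\neq i$, which together with $F(\phi_i)\cap F(\phi_j)=\emptyset$ and Lemma \ref{delight} kills the cross terms via Lemma \ref{easypart}; the price of not localizing near $F(\phi_i)$ is that the hypothesis (\ref{equiv1}), assumed only at $\zeta\in F(\phi_i)$, must be upgraded to a full limit along $E_i$, which the paper does by a separate contradiction argument using the lower bound $d_\tau(\phi(z_n),\phi_i(z_n))\geq|\log(\tau(\phi_i(z_n))/\tau(\phi(z_n)))|$ to force $\rho_\tau\to1$ on $E_i$, impossible since $\rho_\tau\leq1-e^{-\epsilon}$ there. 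If you want to salvage your approach you would need to either prove that the $F(\phi_j)$ admit disjoint open neighborhoods under the standing hypotheses (which does not appear to be true in general) or replace the arcs by a disk decomposition of the above type.
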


In Section 5, we give a partial answer for the question regarding the component structure of $\mathcal{C}(A^p(\omega))$ under the operator norm topology. Finally, we close this section by mentioning that the distance $\rho_\tau(z,w)$ in all our theorems can be replaced by the following explicit function:

\begin{align*}
f(z,w)=1-\exp\left(-\frac{|z-w|}{\min(\tau(z),\tau(w))}\right).
\end{align*}

\bigskip

\textit{Constants.}
 In the rest of the paper, we use the notation
$X\lesssim Y$  or $Y\gtrsim X$ for nonnegative quantities $X$ and $Y$ to mean
$X\le CY$ for some inessential constant $C>0$. Similarly, we use the notation $X\approx Y$ if both $X\lesssim Y$ and $Y \lesssim X$ hold.

\section{Preliminary}
We assumed that $\tau'(r)\rightarrow0$ when $r\rightarrow1^-$ in Section 1. As such, there exist constants $c_1, c_2>0$ such that $\tau(z)\leq c_1(1-|z|)$ for $z\in\D$ and

\begin{align}\label{comparable}
|\tau(z)-\tau(w)|\leq c_2|z-w|,\quad\text{for}\quad z,w\in\D.
\end{align}

We let $D(z,\delta\tau(z))$ be a Euclidean disk centred at $z$ with radius $\delta\tau(z)$, and we use the notation $D(\delta\tau(z)):=D(z,\delta\tau(z))$ for simplicity. Throughout this paper, we denote

\begin{align*}
m_\tau:=\frac{\min(1,c_1^{-1},c_2^{-1})}{4}.
\end{align*}

Using (\ref{comparable}) and the definition of $m_\tau$, we obtain that for $0<\delta\leq m_\tau$,

\begin{align}\label{equiquan}
\frac{3}{4}\tau(z)\leq\tau(w)\leq\frac{5}{4}\tau(z)\quad\text{if}\quad w\in D(\delta\tau(z)).
\end{align}

We can also refer to the proof of lemma 2.1 of \cite{PP} for the inequality above. If the Borel function $\tau:\D\rightarrow(0,+\infty)$ satisfies the condition (\ref{equiquan}), we call it a radius function.

\subsection{Radius functions and associated distances}

Using the radius function $\tau$, we can define the following set

\begin{align*}
B_\tau(z,r):=\{w\in\D:d_\tau(z,w)<r\},
\end{align*}

where

\begin{align*}
d_\tau(z,w)=\inf_\gamma\int_0^1\frac{|\gamma'(t)|}{\tau(\gamma(t))}\,dt
\end{align*}

is taken piecewise over $\mathcal{C}^1$ curves $\gamma:[0,1]\rightarrow\D$ that connect $z$ and $w$. The following proposition gives the inclusion relation between $B_\varphi(z,r)$ and the Euclidian disks with center $z$. We use the argument of the proof of proposition 5 of \cite{D}.

\begin{pro}\label{setinclus}
Let $\omega\in\mathcal{W}$ and $0<\delta<m_\tau$. Then, we have
\begin{align*}
B_\tau(z,r)\subset D\left(z,2r\tau(z)\right)\subset B_\tau(z,4r)\quad\text{for}\quad0<r<\frac{\delta}{2}.
\end{align*}
\end{pro}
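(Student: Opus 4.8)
The plan is to establish the two set inclusions separately, both leaning on the local comparability (\ref{equiquan}): inside any Euclidean disk $D(\delta\tau(z))$ the radius function stays between $\frac34\tau(z)$ and $\frac54\tau(z)$. The hypothesis $r<\delta/2$ is used precisely so that the Euclidean disk $D(z,2r\tau(z))$ has radius $2r\tau(z)<\delta\tau(z)$ and therefore sits inside the comparability region $D(\delta\tau(z))$; this is what lets me convert $d_\tau$-estimates into Euclidean ones and vice versa.

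For the right-hand inclusion $D(z,2r\tau(z))\subset B_\tau(z,4r)$, I would simply test the infimum defining $d_\tau$ against the straight segment $\gamma(t)=z+t(w-z)$, $t\in[0,1]$, joining $z$ to a point $w$ with $|z-w|<2r\tau(z)$. Every point of this segment lies in $D(z,2r\tau(z))\subset D(\delta\tau(z))$, so (\ref{equiquan}) gives $\tau(\gamma(t))\ge\frac34\tau(z)$ throughout, whence
\[
d_\tau(z,w)\le\int_0^1\frac{|\gamma'(t)|}{\tau(\gamma(t))}\,dt\le\frac{4|z-w|}{3\tau(z)}<\frac{8r}{3}<4r,
\]
so that $w\in B_\tau(z,4r)$.

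The left-hand inclusion $B_\tau(z,r)\subset D(z,2r\tau(z))$ is the part requiring more care, and it is where I expect the only real subtlety to lie, since here I have no control over which curves nearly realize the distance. I would argue by contradiction: suppose $w\in B_\tau(z,r)$ but $|z-w|\ge2r\tau(z)$, and pick a curve $\gamma$ joining $z$ to $w$ with $\int_0^1|\gamma'(t)|/\tau(\gamma(t))\,dt<r$. Since $\gamma$ starts at the center and ends outside the disk $D(z,2r\tau(z))$, a first-exit-time argument produces $t_0\in(0,1]$ with $|\gamma(t_0)-z|=2r\tau(z)$ and $\gamma(t)\in D(z,2r\tau(z))$ for $t\le t_0$. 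On this initial arc (\ref{equiquan}) applies because $2r\tau(z)<\delta\tau(z)$, giving $\tau(\gamma(t))\le\frac54\tau(z)$, so
\[
\int_0^{t_0}\frac{|\gamma'(t)|}{\tau(\gamma(t))}\,dt\ge\frac{4}{5\tau(z)}\int_0^{t_0}|\gamma'(t)|\,dt\ge\frac{4}{5\tau(z)}\,|\gamma(t_0)-z|=\frac{8r}{5}>r,
\]
which contradicts the choice of $\gamma$. Hence $|z-w|<2r\tau(z)$.

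The main obstacle is thus confined to the left inclusion: controlling an arbitrary competitor curve rather than a chosen one. The first-exit device resolves it by reducing attention to the initial portion of the curve, which is guaranteed to live in the comparability region; this is exactly the mechanism behind the argument of Proposition 5 of \cite{D}. The generous constants $2$ and $4$ in the statement leave room to absorb the factors $\frac45$ and $\frac43$ coming from (\ref{equiquan}), so no sharpness tracking is needed.
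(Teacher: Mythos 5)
Your proof is correct and follows essentially the same route as the paper: the straight segment combined with (\ref{equiquan}) for the inclusion $D(z,2r\tau(z))\subset B_\tau(z,4r)$, and a first-exit-time argument restricting an arbitrary competitor curve to its initial arc inside the comparability region for $B_\tau(z,r)\subset D(z,2r\tau(z))$. The paper phrases the latter directly via a parameter $s$ with $|z-w|=s\delta\tau(z)$ rather than by contradiction, and uses slightly cruder constants, but the mechanism is identical.
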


\begin{proof}
For a given $z\in\D$ and any point $w$ in $B_\tau(z,r)$, there is $s>0$ that satisfies
\begin{align}\label{distzw}
|z-w|=s\delta\tau(z).
\end{align}
Now, we consider an arbitrary curve $\gamma$ that connects $z$ and $w$ with $\gamma(0)=z$ and $\gamma(1)=w$. We can get the minimum value $0<t_0\leq1$ such that
\begin{align*}
|z-\gamma(t_0)|=\min\{s,1\}\delta\tau(z).
\end{align*}
By (\ref{equiquan}), we have $|z-\gamma(t_0)|\leq\delta\tau(z)$ so that
\begin{align*}
d_\tau(z,w)\geq\int^{t_0}_0\frac{|\gamma'(t)|}{\tau(\gamma(t))}dt\geq\frac{1}{2\tau(z)}
\int^{t_0}_0|\gamma'(t)|dt\geq\min\{s,1\}\frac{\delta}{2}.
\end{align*}
Since $\frac{\delta}{2}>r>d_\tau(z,w)\geq\min\{s,1\}\frac{\delta}{2}$, the inequality above is bounded below by
\begin{align}\label{distlowerbd}
r>d_\tau(z,w)\geq s\frac{\delta}{2}=\frac{|z-w|}{2\tau(z)}.
\end{align}
Thus, we have proved the first inclusion. For the second inclusion, if we take $\gamma(t)=(1-t)z+tw$ for $w\in D(z,2r\tau(z))$, (\ref{equiquan}) gives
\begin{align*}
d_\tau(z,w)=\inf_\gamma\int^{1}_0\frac{|\gamma'(t)|}{\tau(\gamma(t))}dt\leq 2\frac{|z-w|}{\tau(z)}<4r.
\end{align*}
Hence, the proof is complete.
\end{proof}

\begin{lem}\label{key-1}
Let $\omega\in\mathcal{W}$. Then, there exists $0<r_0<1$ such that $\varphi'(r)\geq\frac{1}{2\tau(r)}$ for $r>r_0$. Moreover, $\lim_{r\rightarrow1^-}\left(\frac{1}{\varphi'(r)}\right)'=0$.
\end{lem}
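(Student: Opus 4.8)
The plan is to reduce both assertions to the single limit
\[
\tau(r)\varphi'(r)\to\infty\qquad(r\to1^-),
\]
and then to extract this limit from the radial Laplacian together with the hypothesis that $-\tau'$ decreases to $0$.

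First I would record two elementary facts about $\varphi$. Since $A$ is non-decreasing and $-\log(1-r)>0$, differentiating $\varphi=-A\log(1-r)$ gives
\[
\varphi'(r)=A'(r)\log\frac{1}{1-r}+\frac{A(r)}{1-r}\ge\frac{A(r)}{1-r}>0
\]
near $1$, so in particular $\varphi'\to\infty$. Writing the radial Laplacian as $\Delta\varphi=\varphi''+\frac{1}{r}\varphi'$ and using $\frac1r\varphi'\ge0$, we obtain $\varphi''\le\Delta\varphi\lesssim\tau^{-2}$, while $\varphi''=\Delta\varphi-\frac1r\varphi'\ge-2\varphi'$ for $r\ge\frac12$. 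Granting $\tau\varphi'\to\infty$, the first assertion is immediate, since $\tau\varphi'\ge\frac12$ eventually is exactly $\varphi'\ge\frac{1}{2\tau}$. For the second assertion I would write $\left(\frac{1}{\varphi'}\right)'=-\frac{\varphi''}{(\varphi')^2}$ and sandwich it: the bound $\varphi''\ge-2\varphi'$ gives $-\frac{\varphi''}{(\varphi')^2}\le\frac{2}{\varphi'}$, while $\varphi''\lesssim\tau^{-2}$ gives $-\frac{\varphi''}{(\varphi')^2}\ge-\frac{C}{(\tau\varphi')^2}$; both outer terms tend to $0$, forcing $(1/\varphi')'\to0$.

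It remains to prove $\tau\varphi'\to\infty$. Here I would use the identity $(r\varphi'(r))'=r\,\Delta\varphi(r)$, valid for radial $\varphi$, which integrates (note $s\varphi'(s)\to0$ as $s\to0$) to
\[
r\varphi'(r)=\int_0^r s\,\Delta\varphi(s)\,ds\asymp\int_0^r\frac{s}{\tau(s)^2}\,ds,
\]
so it suffices to show $\tau(r)\int_0^r\frac{s}{\tau(s)^2}\,ds\to\infty$. Since $\tau'<0$ one has $\frac{1}{\tau(s)^2}=\frac{1}{|\tau'(s)|}\frac{d}{ds}\frac{1}{\tau(s)}$, and as $|\tau'|$ is decreasing, $\frac{1}{|\tau'(s)|}\ge\frac{1}{|\tau'(\rho)|}$ for $s\ge\rho$. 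Hence for any fixed $\rho<r$,
\[
\tau(r)\int_\rho^r\frac{s}{\tau(s)^2}\,ds\ge\frac{\rho}{|\tau'(\rho)|}\,\tau(r)\Bigl(\frac{1}{\tau(r)}-\frac{1}{\tau(\rho)}\Bigr)=\frac{\rho}{|\tau'(\rho)|}\Bigl(1-\frac{\tau(r)}{\tau(\rho)}\Bigr).
\]
Letting $r\to1^-$ with $\rho$ fixed and using $\tau(r)\to0$ yields $\liminf_{r\to1^-}\tau(r)\int_0^r\frac{s}{\tau(s)^2}\,ds\ge\frac{\rho}{|\tau'(\rho)|}$; as the left-hand side does not depend on $\rho$ while $\frac{\rho}{|\tau'(\rho)|}\to\infty$ (because $-\tau'(\rho)\to0$), this liminf is $+\infty$, as required.

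The step I expect to be the main obstacle is precisely this last one, because $\varphi'$ and $\tau^{-2}$ are genuinely of different order; for instance, for $\omega(z)=e^{-1/(1-|z|)}$ one has $\varphi'\asymp(1-r)^{-2}$ but $\tau^{-2}\asymp(1-r)^{-3}$, so one cannot simply replace $\varphi'$ by $\tau^{-2}$ and must instead route the comparison through the integral $\int_0^r s\,\tau^{-2}$. The device that makes the argument go through is the rewriting $\tau^{-2}=|\tau'|^{-1}(1/\tau)'$ together with the assumption that $-\tau'$ decreases to $0$, which is exactly what promotes a crude constant lower bound into divergence.
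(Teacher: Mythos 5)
Your proof is correct, and for the first assertion it runs on the same engine as the paper's: both start from $r\varphi'(r)=\int_0^r s\,\Delta\varphi(s)\,ds\gtrsim\int_0^r s\,\tau(s)^{-2}\,ds$ and lower-bound the integrand by a multiple of $-\tau'(s)/\tau(s)^2$ so that the integral telescopes to $1/\tau(r)-1/\tau(\rho)$. The difference is in the bookkeeping and in what you extract. The paper fixes one radius $r$ with $-\tau'\le r$ and settles for the constant lower bound $\tau\varphi'>\tfrac12$; you keep the factor $\rho/|\tau'(\rho)|$ and let $\rho\to1^-$, which upgrades the conclusion to $\tau(r)\varphi'(r)\to\infty$. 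That stronger statement is what makes your treatment of the second assertion cleaner: you sandwich $(1/\varphi')'=-\varphi''/(\varphi')^2$ between $-C/(\tau\varphi')^2$ and $2/\varphi'$, both of which visibly tend to $0$. The paper instead splits into the case where $\tau(r_n)\varphi'(r_n)$ has a finite limit along a sequence (handled by an application of L'H\^ospital's rule whose hypotheses are not really checked) and the case $\tau\varphi'\to\infty$ (handled essentially as you do). Your argument shows the first case simply cannot occur, so it eliminates the weakest step of the published proof; the only points to make explicit are that the implicit constant in $\Delta\varphi\asymp\tau^{-2}$ is harmless because you are proving divergence rather than a specific constant, and that $\rho$ must be taken close enough to $1$ that $\tau$ is genuinely decreasing on $[\rho,1)$.
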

\begin{proof}
Since $-\tau'(|z|)$ tends to $0$ as $|z|\rightarrow1$, there is $0<r<1$ such that
\begin{align*}
-\tau'(|z|)\leq r\quad\text{for}\quad|z|\in[r,1).
\end{align*}
 Thus, we can choose $r<r_0<1$ that satisfies $2\tau(r_0)<\tau(r)$ so that for $r_0<|z|<1$,
\begin{align}\label{lowerbd}
\tau(z)\varphi'(|z|)=\frac{\tau(z)}{|z|}\int^{|z|}_0s\Delta\varphi(s)ds&\geq\frac{\tau(z)}{|z|}
\int^{|z|}_{r}\frac{-\tau'(s)}{\tau(s)^2}ds\nonumber\\&\geq\frac{\tau(z)}{|z|}
\left(\frac{1}{\tau(z)}-\frac{1}{\tau(r)}\right)\nonumber\\&\geq\frac{1}{|z|}\left(1-\frac{\tau(r_0)}{\tau(r)}\right)>\frac{1}{2}.
\end{align}
Therefore $\liminf_{r\rightarrow1^-}\tau(r)\varphi'(r)\neq0$. Assume that there is a sequence such that $\lim_{n\rightarrow\infty}\tau(r_n)\varphi'(r_n)=a>\frac{1}{2}$. By the L'H\^{o}spital's rule,
\begin{align*}
\lim_{r\rightarrow1}\frac{\tau(r)}{1/\varphi'(r)}=\lim_{r\rightarrow1}\frac{\tau'(r)}{(1/\varphi'(r))'}\neq0.
\end{align*}
Thus, $\lim_{r\rightarrow1^-}\left(\frac{1}{\varphi'(r)}\right)'$ must be zero since $\tau'(r)\rightarrow0$ and
\begin{align*}
\varphi'(r)=-A'(r)\log(1-r)+A(r)\frac{1}{1-r}
\end{align*}
is increasing as $r\rightarrow1^-$. For the case $\lim_{r\rightarrow1^-}\tau(r)\varphi'(r)=\infty$, we easily obtain $\lim_{r\rightarrow1^-}\frac{\varphi''(r)}{\varphi'(r)^2}=0$ using the formula $\Delta\varphi(z)=\varphi''(r)+\frac{1}{r}\varphi'(r)$.
\end{proof}
 Recently, in Theorem 3.3 of \cite{HLS}, the authors gave the following inequality:
\begin{align}\label{happy}
e^{-d_\tau(z,w)}\leq C(M)\left(\frac{\min(\tau(z),\tau(w))}{|z-w|}\right)^M,\quad z,w\in\D,
\end{align}
when $\omega\in\mathcal{W}$. Using the inequality (\ref{happy}) together with Lemma \ref{key-1}, we can obtain the following relation.
\begin{lem}\label{setinclus2}
Let $\omega\in\mathcal{W}$. Define the following set
\begin{align*}
B_\varphi(z,R):=\{w\in\D:d_\varphi(z,w)<R\},
\end{align*}
where $d_\varphi(z,w)$ is defined in (\ref{verphi}). Then there is $R'>R>0$ such that
\begin{align*}
B_\varphi(z,R)\subset D(z,R'/\varphi'(|z|)).
\end{align*}
\end{lem}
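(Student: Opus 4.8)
The plan is to convert the metric condition $d_\varphi(z,w)<R$ directly into the Euclidean bound $|z-w|<R'/\varphi'(|z|)$ by a Gr\"onwall-type integration along an admissible path. The key observation is that $d_\varphi$ is precisely the Riemannian distance attached to the \emph{radius-type} function $\sigma(r):=1/\varphi'(r)$ (the metric $\varphi'(|z|)^2(dx^2+dy^2)$ is $\sigma^{-2}(dx^2+dy^2)$), so that the target disk $D(z,R'/\varphi'(|z|))=D(z,R'\sigma(z))$ is measured on the natural scale of this metric. It therefore suffices to produce an $L>0$, \emph{independent of} $z$, with
\begin{align*}
|z-w|\le\frac{e^{LR}-1}{L}\,\sigma(z)\qquad\text{whenever}\qquad d_\varphi(z,w)<R,
\end{align*}
and then set $R'=(e^{LR}-1)/L$; since $e^{x}>1+x$ for $x>0$, this gives $R'>R$ as required.

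First I would record a global Lipschitz control on $\sigma$. Since $\varphi\in\mathcal C^2$ and $\varphi'>0$ on $[0,1)$, the radial function $\sigma=1/\varphi'$ is $C^1$ and positive, and by the second assertion of Lemma \ref{key-1} we have $\sigma'(r)=\left(1/\varphi'(r)\right)'\to0$ as $r\to1^-$. Hence $\sigma'$ is continuous on $[0,1)$ and bounded near $1$, so it is bounded on all of $[0,1)$, say $|\sigma'|\le L$. Because $\sigma$ is radial, this yields the global estimate
\begin{align*}
|\sigma(z)-\sigma(w)|\le L\bigl||z|-|w|\bigr|\le L\,|z-w|,\qquad z,w\in\D.
\end{align*}

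Next, given $w\in B_\varphi(z,R)$, I would fix a piecewise $\mathcal C^1$ path $\gamma:[0,1]\to\D$ from $z$ to $w$ whose $\sigma$-length $s(1):=\int_0^1|\gamma'(t)|/\sigma(\gamma(t))\,dt$ is still $<R$ (such a path exists because $d_\varphi(z,w)<R$ is an infimum). Writing $\ell(t):=\int_0^t|\gamma'(u)|\,du$ for the Euclidean arclength, the Lipschitz bound gives $\sigma(\gamma(t))\le\sigma(z)+L\,\ell(t)$, whence almost everywhere
\begin{align*}
\frac{\ell'(t)}{\sigma(z)+L\,\ell(t)}=\frac{|\gamma'(t)|}{\sigma(z)+L\,\ell(t)}\le\frac{|\gamma'(t)|}{\sigma(\gamma(t))}=s'(t).
\end{align*}
Integrating from $0$ to $1$ and exponentiating gives $\sigma(z)+L\,\ell(1)\le\sigma(z)e^{L s(1)}<\sigma(z)e^{LR}$, so that $|z-w|\le\ell(1)\le\frac{e^{LR}-1}{L}\sigma(z)$, which is exactly the desired estimate with $R'=(e^{LR}-1)/L$ independent of $z$. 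Since this holds for every $w\in B_\varphi(z,R)$, the inclusion follows.

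The main obstacle is the \emph{uniform} Lipschitz constant $L$ for $\sigma=1/\varphi'$: near the boundary both $\varphi'$ and $\varphi''$ blow up, and it is only the assertion $(1/\varphi')'\to0$ of Lemma \ref{key-1} that guarantees $\sigma'$ stays bounded there; this is precisely what lets $1/\varphi'$ play the role of a radius function and keeps $R'$ independent of $z$. I note that one could instead try to route the bound through $d_\tau$, using $d_\varphi\gtrsim d_\tau$ together with (\ref{happy}); however that only yields the coarser scale $\tau(z)$, which is too large since $1/\varphi'\ll\tau$ in general (for instance $\tau\asymp(1-r)^{3/2}$ while $1/\varphi'\asymp(1-r)^2$ for $\omega=e^{-1/(1-|z|)}$). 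It is the direct integration above, exploiting the Lipschitz bound on $1/\varphi'$, that delivers the sharp radius $R'/\varphi'(|z|)$.
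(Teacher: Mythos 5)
Your proof is correct, and it reaches the same destination as the paper by a more self-contained route. The paper also starts from the second assertion of Lemma \ref{key-1} (that $(1/\varphi')'\to 0$ near the boundary, so $1/\varphi'$ has a small Lipschitz constant there and hence qualifies as a radius function), but it then simply invokes the Hu--Lv--Schuster inequality (\ref{happy}) with $\tau$ replaced by $1/\varphi'$ to get (\ref{need}), i.e.\ $|z-w|\le \min\bigl(1/\varphi'(|z|),1/\varphi'(|w|)\bigr)e^{d_\varphi(z,w)/M+C/M}$, and reads off $R'$. You instead re-derive the needed estimate from scratch by the Gr\"onwall comparison $\ell'(t)/(\sigma(z)+L\ell(t))\le s'(t)$ along a near-optimal path, obtaining $|z-w|\le \frac{e^{LR}-1}{L}\sigma(z)$ with a global Lipschitz bound $|\sigma'|\le L$; this is essentially the standard proof of (\ref{happy}) specialized to what is needed, so it trades the citation for a short explicit computation and yields an explicit $R'=(e^{LR}-1)/L>R$. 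Your closing remark is also on point: the scale here really is $1/\varphi'$, not $\tau$, and going through $d_\tau$ would give only the coarser radius $\tau(z)$, which is too large (e.g.\ $\tau\asymp(1-r)^{3/2}$ versus $1/\varphi'\asymp(1-r)^{2}$ for $\omega=e^{-1/(1-|z|)}$). The only cosmetic caveat is the degenerate case $L=0$ (constant $\sigma$), where one just takes any $R'>R$; otherwise the argument is complete.
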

\begin{proof}
Since we have $\left(\frac{1}{\varphi'(r)}\right)'\rightarrow0$ as $r\rightarrow0$ by Lemma \ref{key-1}, for any $\epsilon>0$, there exists a compact set $\mathcal{K}$ of $\D$ such that
\begin{align*}
\left|\frac{1}{\varphi'(|z|)}-\frac{1}{\varphi'(|w|)}\right|<\epsilon|z-w|,
\end{align*}
where $z,w\in\D\setminus \mathcal{K}$. Then we can replace $\tau$ in (\ref{happy}) with $1/\varphi'$, thus we have
\begin{align}\label{need}
\frac{|z-w|}{\min(1/\varphi'(|z|),1/\varphi'(|w|))}\leq e^{\frac{1}{M}d_\varphi(z,w)+\frac{C}{M}}.
\end{align}
Therefore, we have $R'>e^{\frac{R}{M}+\frac{C}{M}}$ satisfying the inclusion.
\end{proof}

\begin{pro}\label{impot}
Let $\omega\in\mathcal{W}$ and $d_\varphi(z,w)<R$. Then we have
\begin{align*}
e^{-\varphi(z)}\approx e^{-\varphi(z_s)},\quad\text{where}\quad z_s=(1-s)z+sw,
\end{align*}
for all $0\leq s\leq1$.
\end{pro}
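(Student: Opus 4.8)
The plan is to reduce the multiplicative comparison $e^{-\varphi(z)}\approx e^{-\varphi(z_s)}$ to the additive statement that $|\varphi(z)-\varphi(z_s)|$ is bounded by a constant depending only on $R$, uniformly in $z,w$ and $s\in[0,1]$. Since $\varphi$ is radial I write $\varphi(z)=\varphi(|z|)$, and because $\D$ is convex every $z_s=(1-s)z+sw$ lies in $\D$. The first step is to convert the hypothesis $d_\varphi(z,w)<R$ into a Euclidean estimate: by Lemma \ref{setinclus2} there is $R'>R$ with $w\in B_\varphi(z,R)\subset D(z,R'/\varphi'(|z|))$, so that $|z-w|<R'/\varphi'(|z|)$, and hence $|z_s-z|=s|w-z|<R'/\varphi'(|z|)$ for every $s$. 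In particular $\big||z_s|-|z|\big|\le|z_s-z|<R'/\varphi'(|z|)$.

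The core of the argument is then to show that $\varphi'$ is essentially constant, up to a factor depending on $R$, on the radial interval joining $|z|$ and $|z_s|$. Here I would use the second conclusion of Lemma \ref{key-1}, namely $\left(1/\varphi'(r)\right)'\to0$ as $r\to1^-$. Fixing $\epsilon>0$ with $\epsilon R'<\tfrac12$, this yields $r_1<1$ such that $|(1/\varphi')'|<\epsilon$ on $[r_1,1)$. For $z$ with $|z|>r_1$ I would integrate this derivative bound over the radial interval between $a:=|z|$ and $|z_s|$, whose length is at most $R'/\varphi'(a)=R'/\varphi'(|z|)$, to obtain
\begin{align*}
\left|\frac{1}{\varphi'(t)}-\frac{1}{\varphi'(a)}\right|\le\epsilon\,\big|t-a\big|<\epsilon R'\,\frac{1}{\varphi'(a)}<\frac{1}{2\varphi'(a)}
\end{align*}
for every $t$ in that interval. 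Consequently $\varphi'(t)\approx\varphi'(|z|)$ there, with constants independent of $z$.

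Finally I would estimate, using the comparability just established on the radial interval,
\begin{align*}
|\varphi(z)-\varphi(z_s)|=\left|\int_{|z|}^{|z_s|}\varphi'(t)\,dt\right|\lesssim\varphi'(|z|)\,\big||z_s|-|z|\big|<\varphi'(|z|)\cdot\frac{R'}{\varphi'(|z|)}=R'.
\end{align*}
This bounds $|\varphi(z)-\varphi(z_s)|$ by a constant depending only on $R$ whenever $|z|>r_1$; for $z$ in the complementary compact set $\{|z|\le r_1\}$ the quantities $\varphi'(|z|)$, $|w-z|$, and $\varphi$ are all bounded, so $|\varphi(z)-\varphi(z_s)|$ is controlled there by compactness and continuity. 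Exponentiating then gives $e^{-\varphi(z)}\approx e^{-\varphi(z_s)}$.

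I expect the main obstacle to be the middle step, i.e.\ showing that $\varphi'$ does not vary too much along the segment: the Euclidean radius $R'/\varphi'(|z|)$ furnished by Lemma \ref{setinclus2} is comparable to, but need not be a small multiple of, the natural scale $1/\varphi'(|z|)$, so one cannot simply appeal to a single application of an (\ref{equiquan})-type comparability and must instead exploit the slow variation $\left(1/\varphi'\right)'\to0$ quantitatively (equivalently, chain finitely many comparability steps). Keeping the constants uniform as $|z|\to1^-$ is the delicate point, and this is precisely where the hypotheses defining the class $\mathcal{W}$, distilled in Lemma \ref{key-1}, enter.
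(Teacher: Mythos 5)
Your argument is correct, but it diverges from the paper's proof at the key step, and the comparison is instructive. Both proofs begin identically: convert $d_\varphi(z,w)<R$ into a Euclidean bound via Lemma \ref{setinclus2} and reduce the claim to $|\varphi(z)-\varphi(z_s)|\lesssim_R 1$. The paper, however, uses the inequality (\ref{need}) in its sharp form $|z-w|\max(\varphi'(|z|),\varphi'(|w|))<R'$ (note that $\min(1/\varphi'(|z|),1/\varphi'(|w|))$ is the reciprocal of $\max(\varphi'(|z|),\varphi'(|w|))$), and then observes that since $|z_s|\le\max(|z|,|w|)$ and $\varphi'$ is increasing, $\varphi'$ along the entire segment is dominated by that same maximum; integrating $\varphi'$ over the segment (equivalently, bounding $|\varphi(z)-\varphi(z_s)|\le d_\varphi(z,z_s)$ by the straight-line length) gives $|\varphi(z)-\varphi(z_s)|\le|z-w|\max(\varphi'(|z|),\varphi'(|z_s|))\le|z-w|\max(\varphi'(|z|),\varphi'(|w|))<R'$ in one stroke, uniformly in $z$, with no case analysis. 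You instead keep only the weaker bound $|z-w|<R'/\varphi'(|z|)$ and recover the comparability $\varphi'(t)\approx\varphi'(|z|)$ on the radial interval from the slow-variation property $(1/\varphi')'\to0$ of Lemma \ref{key-1}; the obstacle you flag --- that $R'/\varphi'(|z|)$ is a full unit of the natural scale, so a single (\ref{equiquan})-type comparison does not suffice --- is real, and your quantitative choice $\epsilon R'<\tfrac12$ is exactly what overcomes it. The price of your route is twofold: the extra slow-variation argument, which the paper's use of monotonicity renders unnecessary, and the non-uniform tail. On the latter point, be slightly careful: you need the whole radial interval between $|z|$ and $|z_s|$ to lie in $[r_1,1)$, which forces you to enlarge the exceptional compact set a little, and on that compact set the boundedness of $\varphi(z_s)$ is not purely a statement about $z$ --- you must also note that $d_\varphi(z,w)\ge|\varphi(|z|)-\varphi(|w|)|$ (integrate $\varphi'(|\gamma|)|\gamma'|\ge|\tfrac{d}{dt}\varphi(|\gamma|)|$ along any curve), so that $|z|\le r_1$ and $d_\varphi(z,w)<R$ together confine $w$, hence $z_s$, to a compact subset of $\D$. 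These are routine repairs, not gaps, but the paper's version avoids them entirely.
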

\begin{proof}
From (\ref{need}) in Lemma \ref{setinclus2}, there is $R'>0$ such that
\begin{align}\label{confusing}
|z-z_s|\leq|z-w|<\frac{R'}{\max(\varphi'(|z|),\varphi'(|w|))},\quad\forall s\in[0,1]
\end{align}
for $d_\varphi(z,w)<R$. Moreover, since $|\varphi'(z)|=\frac{1}{2}\varphi'(|z|)$ we have
\begin{align*}
d_\varphi(z,z_s)=\int^1_0|\gamma'(t)|\varphi'(|\gamma(t)|)dt&=2\int^1_0|\gamma'(t)||\varphi'(\gamma(t))|dt\\
&\geq\left|\int^1_0\gamma'(t)\varphi'(\gamma(t))dt\right|=|\varphi(z)-\varphi(z_s)|.
\end{align*}
Since $|(1-s)z+sw|\leq\max(|z|,|w|)$ and $\varphi'(r)$ is increasing, (\ref{confusing}) gives
\begin{align*}
|\varphi(z)-\varphi(z_s)|\leq d_\varphi(z,z_s)&\leq|z-z_s|\max(\varphi'(|z|),\varphi'(|z_s|))\\&\leq|z-w|\max(\varphi'(|z|),\varphi'(|w|))<R'.
\end{align*}
Thus, we complete our proof.
\end{proof}
Therefore, if $C_\phi$ and $C_\psi$ are bounded operators and $d_\varphi(\phi(z),\psi(z))<R$ for some $R>0$, we can tell the composition operator $C_{\phi_s}$ is also bounded on $A^p(\omega)$ where $\phi_s(z)=(1-s)\phi(z)+s\psi(z)$, $0\leq s\leq1$ by (\ref{suffibddwithoutU}).

\subsection{The size estimate of the growth of $f$ in $A^p(\omega)$}

\begin{lem}\cite[Lemma 2.2]{PP}\label{subharmoni-1}
Let $\omega=e^{-\varphi}$, where $\varphi$ is a subharmonic function. Suppose the function $\tau$ satisfies properties (\ref{equiquan}) and  $\tau(z)^2\Delta\varphi(z)\lesssim1$. For $\beta\in\R$ and $0<p<\infty$, there exists a constant $M\geq1$ such that
\begin{align*}
|f(z)|^p\omega(z)^\beta\leq\frac{M}{\delta^2\tau(z)^2}\int_{D(\delta\tau(z))}|f|^p\omega^\beta dA
\end{align*}
for a sufficiently small $\delta>0$, and $f\in H(\D)$.
\end{lem}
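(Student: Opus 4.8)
The plan is to reduce the weighted pointwise bound to the ordinary sub-mean value inequality for a subharmonic function. The naive route — bounding the oscillation of $\varphi$ across $D(\delta\tau(z))$ and pulling the weight out — is doomed here, because $\tau(z)\varphi'(|z|)$ need not stay bounded: for the model weight $\omega(z)=e^{-\frac{1}{1-|z|}}$ one computes $\tau(z)\varphi'(|z|)\approx(1-|z|)^{-1/2}\to\infty$, so the radial variation of $\varphi$ over a disk of radius $\delta\tau(z)$ is of order $\delta\,\tau(z)\varphi'(|z|)\to\infty$ and $e^{\beta(\varphi(w)-\varphi(z))}$ is not controllable. The remedy is to absorb the harmonic part of $\beta\varphi$ into the holomorphic function $f$ and to control only the residual Green potential, whose size is governed by $\rho^2\Delta\varphi$ with $\rho=\delta\tau(z)$, a quantity comparable to $\delta^2$ by the hypothesis $\tau^2\Delta\varphi\lesssim1$.

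First I would fix $z$, set $\rho=\delta\tau(z)$ and $D=D(z,\rho)$, taking $\delta\le m_\tau$ so that $\tau(z)\le c_1(1-|z|)$ forces $D\subset\D$. By (\ref{equiquan}) one has $\tau(\zeta)\approx\tau(z)$ for $\zeta\in D$, hence $\Delta\varphi(\zeta)\lesssim\tau(\zeta)^{-2}\approx\tau(z)^{-2}$ throughout $D$. Next I would invoke the Riesz decomposition of $\varphi$ on the disk: write $\varphi=H+\chi$ on $D$, where $H$ is harmonic and $\chi(w)=-\int_D G_D(w,\zeta)\,\Delta\varphi(\zeta)\,dA(\zeta)$ is the Green potential of $\Delta\varphi$, with $G_D\ge0$ the Green function of $D$. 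The crucial estimate is the uniform bound $|\chi(w)|\le\|\Delta\varphi\|_{L^\infty(D)}\int_D G_D(w,\zeta)\,dA(\zeta)\lesssim\tau(z)^{-2}\rho^2=\delta^2$ for every $w\in D$, which rests only on the standard fact that $\int_D G_D(w,\cdot)\,dA\lesssim\rho^2$ on a disk of radius $\rho$.

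Then, since $D$ is simply connected and $H$ is harmonic, I would write $H=\operatorname{Re}h$ with $h$ holomorphic on $D$ and form the holomorphic function $F=f\,e^{-\beta h/p}$. Since $F$ is holomorphic, $|F|^p$ is subharmonic, and $|F|^p=|f|^p e^{-\beta H}=|f|^p\omega^\beta e^{\beta\chi}$. Applying the sub-mean value inequality to $|F|^p$ over $D$, whose normalized area is $\delta^2\tau(z)^2$, gives $|f(z)|^p\omega(z)^\beta e^{\beta\chi(z)}\le\frac{1}{\delta^2\tau(z)^2}\int_D|f|^p\omega^\beta e^{\beta\chi}\,dA$. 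Finally I would replace the factors $e^{\beta\chi(z)}$ and $e^{\beta\chi}$ by the constants supplied by $|\chi|\lesssim\delta^2$ (using $e^{\beta\chi(z)}\ge e^{-C|\beta|\delta^2}$ and $e^{\beta\chi}\le e^{C|\beta|\delta^2}$ on $D$, for either sign of $\beta$), obtaining the asserted inequality with $M=e^{2C|\beta|\delta^2}$.

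The hard part will be the potential estimate in the second step, together with the conceptual realization behind it: the divergent first-order term of $\varphi$ must be transferred onto $f$ through the holomorphic factor $e^{-\beta h/p}$, after which the only analytic input left is $\rho^2\|\Delta\varphi\|_{L^\infty(D)}\lesssim\delta^2$ — precisely the hypothesis $\tau^2\Delta\varphi\lesssim1$ combined with (\ref{equiquan}). The subharmonicity of $|F|^p$, the choice of $\delta$ keeping $D\subset\D$, and the bookkeeping of the exponential constants are then routine.
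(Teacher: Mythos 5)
Your argument is correct and is essentially the proof of the cited source: the paper does not prove this lemma itself but refers to \cite[Lemma 2.2]{PP}, whose proof runs exactly through the Riesz decomposition $\varphi=H+\chi$ on $D(\delta\tau(z))$, the Green-potential bound $|\chi|\lesssim\delta^{2}$ supplied by $\tau^{2}\Delta\varphi\lesssim1$ together with (\ref{equiquan}), and the sub-mean value property of $|fe^{-\beta h/p}|^{p}$ with $h$ holomorphic and $\operatorname{Re}h=H$. Your opening remark that the weight cannot simply be pulled out of the integral (since $\tau(z)\varphi'(|z|)$ is unbounded for these weights) correctly identifies why the holomorphic correction factor is indispensable.
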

In order to estimate the difference of function values at two different points, we need to estimate the growth of the functions in $A^p(\omega)$. The following inequality shows the size of the differential of functions in  $A^p(\omega)$. In fact, its proof is the same with the case of a doubling measure $\Delta\varphi$ which is found in lemma 19 of \cite{MMO}. Moreover, we can find the same inequality in our setting in Lemma 3.3 of \cite{HLS}.
\begin{lem}\label{derivsubmean}
Let $\omega=e^{-\varphi}$, where $\varphi$ is a subharmonic function. Suppose the function $\tau$ satisfies properties (\ref{equiquan}) and  $\tau(z)^2\Delta\varphi(z)\lesssim1$. Then
\begin{align*}
|f'(z)|^pe^{-\varphi(z)}\lesssim\frac{1}{\tau(z)^{2+p}}\int_{D(\delta\tau(z))}|f(\xi)|^p e^{-\varphi(\xi)}\,dA(\xi)
\end{align*}
for a sufficiently small $0<\delta<m_\tau$ and $f\in H(\D)$.
\end{lem}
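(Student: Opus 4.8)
The plan is to derive the pointwise bound on $|f'(z)|$ from the corresponding bound on $|f(z)|$ in Lemma \ref{subharmoni-1} via a Cauchy-type estimate on a slightly smaller disk, then control the weight across that disk using the hypothesis $\tau(z)^2\Delta\varphi(z)\lesssim 1$. First I would fix $z\in\D$ and a small radius parameter; the key observation is that $f'$ is itself holomorphic, so on a disk $D(z,\delta\tau(z))$ the Cauchy integral formula gives
\begin{align*}
f'(z)=\frac{1}{2\pi i}\int_{|\xi-z|=\rho}\frac{f(\xi)}{(\xi-z)^2}\,d\xi
\end{align*}
for any $0<\rho<\delta\tau(z)$. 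Taking $\rho\approx\tau(z)$ and estimating crudely yields $|f'(z)|\lesssim \tau(z)^{-1}\sup_{w\in D(\eta\tau(z))}|f(w)|$ for an intermediate radius $\eta\tau(z)$ with $\eta<\delta$. Raising to the $p$-th power and applying Lemma \ref{subharmoni-1} (with $\beta=1$) at a generic point $w$ in that intermediate disk converts the supremum into an averaged integral of $|f|^pe^{-\varphi}$ over a concentric disk of radius a constant multiple of $\tau(z)$.

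The second step is to reconcile the weight factors. Lemma \ref{subharmoni-1} produces an integrand $|f|^p\omega$ evaluated on a disk centred at the intermediate point $w$, but I want everything expressed over $D(\delta\tau(z))$ with the factor $e^{-\varphi(z)}$ pulled out front. Here the hypothesis $\tau(z)^2\Delta\varphi(z)\lesssim 1$ is exactly what guarantees that $\varphi$ oscillates by at most a bounded amount across the disk $D(\delta\tau(z))$, i.e.\ $e^{-\varphi(z)}\approx e^{-\varphi(\xi)}$ uniformly for $\xi\in D(\delta\tau(z))$; this is the standard consequence that on such a disk the subharmonic weight is comparable to its value at the centre. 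Combining this comparability with (\ref{equiquan}), which makes $\tau(w)\approx\tau(z)$ throughout, I can replace each local copy of $\tau(w)$ and $e^{-\varphi(w)}$ by $\tau(z)$ and $e^{-\varphi(z)}$ at the cost of absolute constants. Collecting the powers of $\tau(z)$ — one factor $\tau(z)^{-1}$ from Cauchy raised to the $p$ gives $\tau(z)^{-p}$, and the averaging in Lemma \ref{subharmoni-1} contributes $\tau(z)^{-2}$ — produces exactly $\tau(z)^{-(2+p)}$ in front of $\int_{D(\delta\tau(z))}|f|^pe^{-\varphi}\,dA$, as claimed.

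The main obstacle I anticipate is bookkeeping of the nested radii: the Cauchy estimate needs a buffer between the circle of integration and the disk on which I apply the submean-value inequality, so I must choose $\delta$ and the intermediate $\eta$ small enough (both below $m_\tau$) that all the disks involved lie inside $\D$ and satisfy (\ref{equiquan}), while ensuring the final integration disk is still of the form $D(\delta\tau(z))$ after adjusting constants. Since all the comparisons $\tau(w)\approx\tau(z)$ and $e^{-\varphi(w)}\approx e^{-\varphi(z)}$ hold uniformly once $\delta<m_\tau$ is fixed, these radius adjustments only affect the implicit constant in $\lesssim$, so they do not obstruct the argument; they merely require care to state cleanly. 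As noted in the excerpt, the same estimate appears as Lemma 19 of \cite{MMO} for doubling $\Delta\varphi$ and as Lemma 3.3 of \cite{HLS} in the present setting, so the content is standard and the proof reduces to assembling these two ingredients.
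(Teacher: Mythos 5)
The decisive step of your plan --- that $\tau(z)^2\Delta\varphi(z)\lesssim1$ forces $\varphi$ to oscillate by $O(1)$ on $D(\delta\tau(z))$, so that $e^{-\varphi(\xi)}\approx e^{-\varphi(z)}$ there --- is false, and the rest of the argument collapses without it. A bound on the Laplacian controls only the non-harmonic part of $\varphi$ in the Riesz decomposition; it says nothing about the gradient. Indeed, Lemma \ref{key-1} gives $\varphi'(r)\tau(r)\geq\tfrac12$, and for the model weight $\varphi(r)=\tfrac{1}{1-r}$ (named in the introduction as a typical member of $\mathcal{W}$) one has $\tau(r)\approx(1-r)^{3/2}$, so the oscillation of $\varphi$ across $D(\delta\tau(z))$ is of order $\varphi'(|z|)\,\delta\tau(z)\approx\delta(1-|z|)^{-1/2}\rightarrow\infty$. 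This is also why Proposition \ref{impot} only asserts comparability of $e^{-\varphi}$ at the much smaller scale $|z-w|\lesssim1/\varphi'(|z|)$, and why Lemma \ref{subharmoni-1} is not a triviality. Concretely, your chain $|f'(z)|^p\lesssim\tau(z)^{-p}\sup_{w\in D(\eta\tau(z))}|f(w)|^p$ followed by Lemma \ref{subharmoni-1} applied at $w$ leaves behind the factor $\sup_{w\in D(\eta\tau(z))}e^{\varphi(w)-\varphi(z)}$, which grows like $e^{c\delta\varphi'(|z|)\tau(z)}$ and is unbounded on $\mathcal{W}$; the argument therefore does not close.

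The proof the paper actually relies on (it only cites Lemma 19 of \cite{MMO} and Lemma 3.3 of \cite{HLS} rather than writing one out) uses the Laplacian bound differently: since $\int_{D(\delta\tau(z))}\Delta\varphi\,dA\lesssim\delta^2$, the Riesz decomposition on $D(\delta\tau(z))$ writes $\varphi=\mathrm{Re}\,h+v$ with $h$ holomorphic and $v$ bounded above on the disk and bounded below at its centre. One then runs your Cauchy/sub-mean-value scheme on the holomorphic function $g=fe^{-h/p}$, for which $|g|^p\approx|f|^pe^{-\varphi}$ up to the bounded error $e^{pv}$, so that the unbounded harmonic oscillation of $\varphi$ is absorbed into $|e^{-h/p}|$ instead of being estimated away; the extra term $f\,(h/p)'e^{-h/p}$ produced by differentiating $g$ must then be handled separately (in \cite{MMO} the derivative estimate is stated for functions vanishing at the centre precisely to kill it). If you want a self-contained proof you need to reproduce this localization; the comparability of the weight over a full $\tau$-disk is exactly the feature that fails for large Bergman spaces and cannot be assumed.
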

Using Lemma \ref{derivsubmean}, we obtain the following Proposition which plays a crucial role in our proof.

\begin{lem}\label{inequality}
Let $\omega\in\mathcal{W}$, $0<p<\infty$ and $R>0$. For $w\in\D$ satisfying $d_\varphi(z,w)<R$, we have
\begin{align*}
|f(z)-f(w)|^pe^{-\varphi(z)}\lesssim\frac{\rho_\tau(z,w)^p}{\tau(z)^2}\int_{D(\delta\tau(z))}|f(\xi)|^p e^{-\varphi(\xi)}\,dA(\xi),
\end{align*}
when $|z-w|\leq\delta/2\tau(z)$ where $0<\delta<m_\tau$ and $f\in H(\D)$.
\end{lem}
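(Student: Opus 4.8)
The plan is to reduce $|f(z)-f(w)|$ to the size of $f'$ on the Euclidean segment joining $z$ and $w$, and then feed this into the derivative submean estimate of Lemma \ref{derivsubmean}. Parametrize the segment by $\gamma(t)=(1-t)z+tw$, $t\in[0,1]$, so that $\gamma'(t)=w-z$ and
\[
f(w)-f(z)=\int_0^1 f'(\gamma(t))(w-z)\,dt,
\]
whence $|f(z)-f(w)|\le|z-w|\sup_{0\le t\le1}|f'(\gamma(t))|$. The hypothesis $|z-w|\le\frac{\delta}{2}\tau(z)$ forces the whole segment into $D(z,\frac{\delta}{2}\tau(z))$, so by (\ref{equiquan}) we get $\tau(\gamma(t))\approx\tau(z)$ for every $t$; the hypothesis $d_\varphi(z,w)<R$ then lets me invoke Proposition \ref{impot} to obtain $e^{-\varphi(\gamma(t))}\approx e^{-\varphi(z)}$ uniformly along the segment.

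Next I would apply Lemma \ref{derivsubmean} at each point $\gamma(t)$, but with an auxiliary radius $\eta$ strictly smaller than $\delta$, chosen so that $\frac{5}{4}\eta+\frac{\delta}{2}\le\delta$ (for instance $\eta=\frac{2}{5}\delta$). This guarantees the containment $D(\gamma(t),\eta\tau(\gamma(t)))\subset D(\delta\tau(z))$, since any point of the inner disk lies within $\eta\tau(\gamma(t))+|z-w|\le(\frac{5}{4}\eta+\frac{\delta}{2})\tau(z)$ of $z$. Combining the submean estimate with the two comparabilities above yields, uniformly in $t$,
\[
|f'(\gamma(t))|^p e^{-\varphi(z)}\lesssim\frac{1}{\tau(z)^{2+p}}\int_{D(\delta\tau(z))}|f|^p e^{-\varphi}\,dA.
\]
Raising the pointwise bound for $|f(z)-f(w)|$ to the $p$-th power, multiplying by $e^{-\varphi(z)}$, and inserting the last display then gives
\[
|f(z)-f(w)|^p e^{-\varphi(z)}\lesssim\frac{|z-w|^p}{\tau(z)^{2+p}}\int_{D(\delta\tau(z))}|f|^p e^{-\varphi}\,dA.
\]

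It remains to replace $|z-w|^p/\tau(z)^p$ by $\rho_\tau(z,w)^p$. Here I would use Proposition \ref{setinclus}: the lower bound (\ref{distlowerbd}) gives $d_\tau(z,w)\ge\frac{|z-w|}{2\tau(z)}$, while the straight-segment computation in the proof of its second inclusion gives $d_\tau(z,w)\le2\frac{|z-w|}{\tau(z)}$, so $d_\tau(z,w)\approx\frac{|z-w|}{\tau(z)}$ in the range $|z-w|\le\frac{\delta}{2}\tau(z)$. In particular $d_\tau(z,w)\le\delta$ is bounded, and by concavity of $x\mapsto1-e^{-x}$ one has $1-e^{-x}\approx x$ on $[0,\delta]$; hence $\rho_\tau(z,w)=1-e^{-d_\tau(z,w)}\approx d_\tau(z,w)\approx\frac{|z-w|}{\tau(z)}$, so that $\frac{|z-w|}{\tau(z)}\lesssim\rho_\tau(z,w)$. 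Substituting into the previous display produces the claimed inequality with factor $\rho_\tau(z,w)^p/\tau(z)^2$.

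The main obstacle is the bookkeeping that makes the submean estimate usable: one must pick the auxiliary radius $\eta$ small enough that every disk $D(\gamma(t),\eta\tau(\gamma(t)))$ is swallowed by the single disk $D(\delta\tau(z))$ appearing on the right-hand side, while simultaneously keeping both $\tau$ and the weight $e^{-\varphi}$ comparable to their values at $z$ along the entire segment. The weight comparability is precisely where the hypothesis $d_\varphi(z,w)<R$ is essential, since the Euclidean smallness $|z-w|\le\frac{\delta}{2}\tau(z)$ alone does \emph{not} control $|\varphi(z)-\varphi(\gamma(t))|$ when $\tau\varphi'$ is unbounded; thus Proposition \ref{impot} is the linchpin of the argument.
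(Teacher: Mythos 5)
Your argument is correct and follows essentially the same route as the paper's proof: reduce $|f(z)-f(w)|$ to $|z-w|$ times the size of $f'$ on the segment, apply Proposition \ref{impot} and Lemma \ref{derivsubmean} with a smaller auxiliary radius so the relevant disks sit inside $D(\delta\tau(z))$, and then convert $|z-w|/\tau(z)$ into $\rho_\tau(z,w)$ via (\ref{distlowerbd}) and the comparability $x\approx 1-e^{-x}$ on $[0,\delta]$. The only (harmless, in fact slightly cleaner) deviation is that you use the integral representation $f(w)-f(z)=\int_0^1 f'(\gamma(t))(w-z)\,dt$ and a supremum over the segment where the paper invokes the mean value theorem at a single intermediate point $z_s$.
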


\begin{proof}
Given a fixed $z\in\D$ and any point $w$ in $D(z,\delta/2\tau(z))$, there is $z_s=(1-s)z+sw$ such that
\begin{align*}
|f(z)-f(w)|=|f'(z_s)(z-w)|
\end{align*}
by the mean value theorem. From Proposition \ref{impot}, Lemma \ref{derivsubmean} and the equality above, we have
\begin{align}\label{meanvalue}
|f(z)-f(w)|^pe^{-\varphi(z)}&\lesssim|f'(z_s)|^p|z-w|^pe^{-\varphi(z_s)}\nonumber\\
&\lesssim\frac{|z-w|^p}{\tau(z_s)^{p+2}}\int_{D(\delta/4\tau(z_s))}|f(\xi)|^p e^{-\varphi(\xi)}\,dA(\xi).
\end{align}
Since $||\xi-z|-s|z-w||\leq|\xi-z_s|<\delta/4\tau(z_s)$ for $\xi\in D(\delta/4\tau(z_s))$,
\begin{align*}
|z-\xi|\leq\frac{\delta}{4}\tau(z_s)+|z-w|\leq \frac{\delta}{2}\tau(z)+\frac{\delta}{2}\tau(z)\leq\delta\tau(z).
\end{align*}
Thus, we have
\begin{align}\label{set}
D(\delta/4\tau(z_s))\subset D(\delta\tau(z)).
\end{align}
Using (\ref{set}) and (\ref{distlowerbd}) of Proposition \ref{setinclus}, we obtain the following inequality from (\ref{meanvalue})
\begin{align*}
|f(z)-f(w)|^pe^{-\varphi(z)}&\lesssim\frac{|z-w|^p}{\tau(z)^{p+2}}\int_{D(\delta\tau(z))}|f(\xi)|^p e^{-\varphi(\xi)}\,dA(\xi)\\
&\lesssim\frac{d_\tau(z,w)^p}{\tau(z)^2}\int_{D(\delta\tau(z))}|f(\xi)|^p e^{-\varphi(\xi)}\,dA(\xi).
\end{align*}
Since $|z-w|\leq\delta/2\tau(z)$, we have $d_\tau(z,w)<\delta$ by Proposition \ref{setinclus}. Thus, by the relation $x\leq e^\delta(1-e^{-x})$ for a positive $x$ near to $0$ we obtain
\begin{align*}
|f(z)-f(w)|^pe^{-\varphi(z)}\lesssim\frac{\rho_\tau(z,w)^p}{\tau(z)^2}\int_{D(\delta\tau(z))}|f(\xi)|^p e^{-\varphi(\xi)}\,dA(\xi).
\end{align*}
\end{proof}

\subsection{Carleson measure theorem}
A positive Borel measure $\mu$ in $\D$ is called a (vanishing) Carleson measure for $A^p(\omega)$ if the embedded $A^p(\omega)\subset L^p(\omega d\mu)$ is (compact) continuous where

\begin{align}\label{embedding}
L^p(\omega d\mu):=\left\{f\in\mathcal{M}(\D)\big|\int_\D|f(z)|^p\omega(z)d\mu(z)<\infty\right\},
\end{align}

and $\mathcal{M}(\D)$ is a set of $\mu$-measurable functions on $\D$. Now, we introduce Carleson measure theorem on $A^p(\omega)$, as given by \cite{PP}.

\begin{thm}[Carleson measure theorem]
Let $\omega\in\mathcal{W}$ and $\mu$ be a positive Borel measure on $\D$. Then, for $0<p<\infty$, we have
\begin{itemize}
\item[(1)]The embedded $I:A^p(\omega)\rightarrow L^p(\omega d\mu)$ is bounded if and only if, for a sufficiently small $\delta\in(0,m_\tau)$, we have
\begin{align*}
\sup_{z\in\D}\frac{\mu(D(\delta\tau(z)))}{\tau(z)^{2}}<\infty.
\end{align*}
Here, we call the set $D(\delta\tau(z))$ a Carleson box and $\|I\|\approx\sup_{z\in\D}\frac{\mu(D(\delta\tau(z)))}{\tau(z)^{2}}$.
\item[(2)]The embedded $I:A^p(\omega)\rightarrow L^p(\omega d\mu)$ is compact if and only if, for a sufficiently small $\delta\in(0,m_\tau)$, we have
\begin{align*}
\lim_{|z|\rightarrow1}\frac{\mu(D(\delta\tau(z)))}{\tau(z)^{2}}=0.
\end{align*}
\end{itemize}
\end{thm}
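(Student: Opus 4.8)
The plan is to prove both equivalences by the standard two-sided scheme for Carleson embeddings, with the radius function $\tau$ and its Euclidean disks $D(\delta\tau(z))$ playing the role that pseudohyperbolic balls play for standard weights. The first ingredient I would set up is a covering lemma: there exist a sequence $\{z_j\}\subset\D$ (a $\tau$-lattice) and a $\delta\in(0,m_\tau)$ so that the disks $D(\delta\tau(z_j))$ cover $\D$, while for each fixed $N$ the dilated disks $D(N\delta\tau(z_j))$ have uniformly bounded overlap. Such a lattice is produced by a Vitali-type selection, and (\ref{equiquan}) guarantees that $\tau$ varies by at most a fixed factor across any single disk, so the disks behave like a metric net for $d_\tau$.

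For the sufficiency half of (1), I would start from the submean inequality of Lemma \ref{subharmoni-1}, which with $\beta=1$ gives $|f(z)|^p\omega(z)\lesssim\tau(z)^{-2}\int_{D(\delta\tau(z))}|f|^p\omega\,dA$. Writing $\int_\D|f|^p\omega\,d\mu=\sum_j\int_{D(\delta\tau(z_j))}|f|^p\omega\,d\mu$ and applying this pointwise bound on each box, together with (\ref{equiquan}) to replace $\tau(z)$ by $\tau(z_j)$ up to constants, I obtain
\begin{align*}
\int_\D|f|^p\omega\,d\mu\lesssim\sum_j\frac{\mu(D(\delta\tau(z_j)))}{\tau(z_j)^2}\int_{D(N\delta\tau(z_j))}|f|^p\omega\,dA,
\end{align*}
after which the supremum hypothesis pulls out the Carleson constant and the bounded overlap sums the remaining integrals back to a constant times $\|f\|_p^p$. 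For the necessity half I would test the embedding on normalized concentrated functions: for each $z$ there is $f_z\in A^p(\omega)$ with $\|f_z\|_p\approx1$ and $|f_z(\xi)|^p\omega(\xi)\gtrsim\tau(z)^{-2}$ for $\xi\in D(\delta\tau(z))$. Feeding $f_z$ into the boundedness inequality yields $\tau(z)^{-2}\mu(D(\delta\tau(z)))\lesssim\int_\D|f_z|^p\omega\,d\mu\le\|I\|^p$, which is exactly the required uniform bound.

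Part (2) is the little-oh version of the same two estimates. For sufficiency I would take a bounded sequence $f_n$ converging to $0$ uniformly on compact subsets of $\D$ and split the lattice sum into the boxes meeting a fixed compact $K$ and those outside it; the inside part vanishes by uniform convergence, while the outside part is dominated by $\sup_{z_j\notin K}\mu(D(\delta\tau(z_j)))/\tau(z_j)^2$, which tends to $0$ as $K\uparrow\D$ by hypothesis. For necessity I would use that the test functions $f_z$ tend to $0$ uniformly on compacta as $|z|\to1$, hence converge weakly to $0$; compactness of $I$ then forces $\|If_z\|_p\to0$, i.e.\ $\mu(D(\delta\tau(z)))/\tau(z)^2\to0$.

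The step I expect to be the genuine obstacle is the construction of the normalized concentrated test functions $f_z$ used in both necessity arguments, since in the large-weight setting there is no M\"obius symmetry and no explicit reproducing kernel to normalize; one must instead invoke the peak functions of \cite{BDK}, whose pointwise lower bound on $D(\delta\tau(z))$ and whose norm control rely precisely on the regularity of $\tau$ and on the auxiliary conditions (\ref{addcondit}). Once these functions are available, the remaining estimates reduce to routine applications of Lemma \ref{subharmoni-1}, the covering lemma, and (\ref{equiquan}).
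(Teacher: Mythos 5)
The paper does not prove this theorem: it is quoted verbatim from \cite{PP} (``we introduce Carleson measure theorem on $A^p(\omega)$, as given by \cite{PP}''), so there is no internal proof to compare against. Your outline is essentially the standard argument of that reference — covering lemma with bounded overlap, the submean estimate of Lemma \ref{subharmoni-1} for sufficiency, and normalized concentrated test functions for necessity — and it is correct; the test functions you correctly identify as the main obstacle are exactly the normalized peak functions $G_{a,b}/\tau(a)^{2/p}$ of Lemma \ref{peakft}, whose properties ($|G_{a,b}|^p e^{-\varphi}\approx 1$ on $D(R\tau(a))$ and $\|G_{a,b}\|_p^p\approx\tau(a)^2$) give precisely the lower bound $\mu(D(\delta\tau(z)))/\tau(z)^2\lesssim\|I\|^p$ you need.
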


By the measure theoretic change of variables, we have

\begin{align}\label{measurechange}
\|C_\phi f\|_{p}^p=\int_\D|f\circ\phi|^p\omega dA=\int_\D|f|^p\omega\omega^{-1}[\omega dA]\circ\phi^{-1},
\end{align}

where

\begin{align*}
\omega^{-1}[\omega dA]\circ\phi^{-1}(E)=\int_{\phi^{-1}(E)}\frac{\omega(z)}{\omega(\phi(z))}dA(z)
\end{align*}

for any measurable subsets $E$ of $\D$. Therefore, Carleson measure theorem states that $\omega^{-1}[\omega dA]\circ\phi^{-1}$ is a Carleson measure if and only if $C_\phi$ is bounded by $A^p(\omega)$ for all $p\in(0,\infty)$. Moreover, the operator norm of $C_\phi$ is obtained by

\begin{align}\label{opnorm}
\|C_\phi\|\approx\sup_{z\in\D}\frac{\omega^{-1}[\omega dA]\circ\phi^{-1}(D(\delta\tau(z)))}{\tau(z)^{2}}.
\end{align}

\subsection{Angular derivative}
For a boundary point, $\zeta$, and $\alpha>1$, we define the nontangential approach region at $\zeta$ by

\begin{align*}
\Gamma(\zeta,\alpha)=\{z\in\D:|z-\zeta|<\alpha(1-|z|)\}.
\end{align*}

A function $f$ is said to have a nontangential limit at $\zeta$ if

\begin{align*}
\angle\lim_{\substack{z\to \zeta\\ z\in \Gamma(\zeta,\alpha)}}f(z)<\infty\quad\text{for each}\quad\alpha>1.
\end{align*}

We say $\phi$ has a finite angular derivative at a boundary point, $\zeta$, if there is $\eta$ on the circle such that

\begin{align*}
\phi'(\zeta):=\angle\lim_{\substack{z\to \zeta\\ z\in \Gamma(\zeta,\alpha)}}\frac{\phi(z)-\eta}{z-\zeta}<\infty,\quad\text{for each}\quad\alpha>1.
\end{align*}
By the Julia-Caratheodory Theorem, it is well known that
\begin{align*}
\beta_\phi(\zeta)=\liminf_{z\rightarrow\zeta}\frac{1-|\phi(z)|}{1-|z|}<\infty
\end{align*}
is an equivalent condition for the finite angular derivative. If $\beta_\phi(\zeta)<\infty$, then $\phi(E(\zeta,k))\subseteq E(\phi(\zeta),k\beta_\phi(\zeta))$ for every $k>0$, where

\begin{align*}
E(\zeta,k)=\{z\in\D:|\zeta-z|^2\leq k(1-|z|^2)\}.
\end{align*}

So, when $\beta_\phi(\zeta)\leq1$ and $k=\frac{1-r}{1+r}$, we have

\begin{align}\label{d<=1}
\phi\left(E\left(\zeta,\frac{1-r}{1+r}\right)\right)\subseteq E\left(\phi(\zeta),\frac{1-r}{1+r}\right),\quad r\in(0,1).
\end{align}

A computation shows that (\ref{d<=1}) gives $\left|\frac{1+r}{2}\phi(\zeta)-\phi(r\zeta)\right|\leq \frac{1-r}{2}$, thus $|\phi(r\zeta)|\geq r$ for $0<r<1$. The readers can refer to \cite{CM} for details above. The next result follows immediately.

\begin{lem}\label{d>1}
Let $\omega$ be a fast weight. Then, $\beta_\phi(\zeta)>1$ for $\zeta\in\partial\D$ if
\begin{align*}
\lim_{z\rightarrow\zeta}\frac{\omega(z)}{\omega(\phi(z))}=0.
\end{align*}
\end{lem}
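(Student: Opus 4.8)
The plan is to prove Lemma \ref{d>1} by contraposition: I would assume that $\beta_\phi(\zeta)\leq 1$ at some boundary point $\zeta$ and show that this forces $\frac{\omega(z)}{\omega(\phi(z))}$ to stay bounded away from $0$ along a sequence approaching $\zeta$, contradicting the hypothesis. The key geometric input is the inclusion (\ref{d<=1}), which holds precisely when $\beta_\phi(\zeta)\leq 1$ (applying the Julia--Carath\'eodory containment $\phi(E(\zeta,k))\subseteq E(\phi(\zeta),k\beta_\phi(\zeta))$ with $k=\frac{1-r}{1+r}$ and $\beta_\phi(\zeta)\leq 1$). As the excerpt notes, a short computation then yields $|\phi(r\zeta)|\geq r$ for all $0<r<1$.

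The heart of the argument is to convert the radial estimate $|\phi(r\zeta)|\geq r$ into a statement about the weight ratio. First I would evaluate the ratio along the radial sequence $z=r\zeta$, writing
\begin{align*}
\frac{\omega(r\zeta)}{\omega(\phi(r\zeta))}=\frac{e^{-\varphi(r)}}{e^{-\varphi(|\phi(r\zeta)|)}}=e^{\varphi(|\phi(r\zeta)|)-\varphi(r)}.
\end{align*}
Since $\varphi(s)=-A(s)\log(1-s)$ is \emph{increasing} in $s$ (because $A$ is non-decreasing and $A(0)\neq 0$), the inequality $|\phi(r\zeta)|\geq r$ gives $\varphi(|\phi(r\zeta)|)\geq\varphi(r)$, hence $\varphi(|\phi(r\zeta)|)-\varphi(r)\geq 0$, and therefore
\begin{align*}
\frac{\omega(r\zeta)}{\omega(\phi(r\zeta))}=e^{\varphi(|\phi(r\zeta)|)-\varphi(r)}\geq 1.
\end{align*}
This directly contradicts the assumption $\lim_{z\to\zeta}\frac{\omega(z)}{\omega(\phi(z))}=0$, since the limit would have to hold in particular along $z=r\zeta$ as $r\to 1^-$. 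Thus $\beta_\phi(\zeta)\leq 1$ is impossible, forcing $\beta_\phi(\zeta)>1$.

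I expect the main obstacle to be the interface between the case $\beta_\phi(\zeta)\leq 1$ and the possibility $\beta_\phi(\zeta)=\infty$ (no finite angular derivative), which needs to be handled cleanly so that the contrapositive genuinely covers all alternatives to $\beta_\phi(\zeta)>1$. Concretely, the dichotomy is between $\beta_\phi(\zeta)>1$ and $\beta_\phi(\zeta)\leq 1$; the containment (\ref{d<=1}) is stated only under $\beta_\phi(\zeta)\leq 1$, so I must be careful that the computation leading to $|\phi(r\zeta)|\geq r$ is invoked exactly in that regime. A secondary technical point is justifying the monotonicity of $\varphi$ and confirming that the radial restriction $z=r\zeta$ is a legitimate way to probe the boundary limit; both are immediate from the standing assumptions on $\omega\in\mathcal{W}$, so the argument should be short once the geometric inclusion is in hand.
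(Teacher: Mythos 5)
Your proposal is correct and follows essentially the same route as the paper: argue by contradiction from $\beta_\phi(\zeta)\leq 1$, invoke the containment (\ref{d<=1}) to get $|\phi(r\zeta)|\geq r$, and use the radial monotonicity of $\omega$ to conclude $\frac{\omega(r\zeta)}{\omega(\phi(r\zeta))}\geq 1$ along the radius, contradicting the vanishing limit. Your explicit justification of the monotonicity of $\varphi$ and the remark that $\beta_\phi(\zeta)=\infty$ is already covered by $\beta_\phi(\zeta)>1$ are correct fillings-in of details the paper leaves implicit.
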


\begin{proof}
To derive a contradiction, we assume that there exists the boundary point $\zeta\in\partial\D$ such that $\beta_\phi(\zeta)\leq1$, but $\lim_{z\rightarrow\zeta}\frac{\omega(z)}{\omega(\phi(z))}=0$. Then, by (\ref{d<=1}) we obtain the prompt result $|\phi(r\zeta)|\geq r$ for $0<r<1$. Thus, $\lim_{r\rightarrow1}\frac{\omega(r)}{\omega(\phi(r\zeta))}\geq1$, so this completes the proof.
\end{proof}

After, we conclude that the condition $\beta_\phi(\zeta)>1$ is an equivalent condition for $\lim_{z\rightarrow\zeta}\frac{\omega(z)}{\omega(\phi(z))}=0$. We can find its proof in lemma 3.1 of \cite{P} for the other side.

\section{Sufficiency for compact differences}

\begin{lem}\label{easypart}
Let $\omega$ be a regular fast weight and $U$ be a nonnegative bounded measurable function on $\D$. If $C_\phi$ is bounded on $A^2(\omega)$ and
\begin{align*}
\lim_{|z|\rightarrow1}U(z)\frac{\omega(z)}{\omega(\phi(z))}=0,
\end{align*}
then $\omega^{-1}[U\omega dA]\circ\phi^{-1}$ is a vanishing Carleson measure on $A^p(\omega)$, $0<p<\infty$.
\end{lem}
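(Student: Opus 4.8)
The plan is to reduce the claim to the box condition of part (2) of the Carleson measure theorem and then estimate the resulting box integral. Writing $\mu:=\omega^{-1}[U\omega dA]\circ\phi^{-1}$, the defining formula gives
\[
\mu(D(\delta\tau(w)))=\int_{\phi^{-1}(D(\delta\tau(w)))}U(z)\frac{\omega(z)}{\omega(\phi(z))}\,dA(z),
\]
so it suffices to show $\tau(w)^{-2}\mu(D(\delta\tau(w)))\to0$ as $|w|\to1$. Before estimating I would record the single use of the boundedness hypothesis: by (\ref{measurechange}) and part (1) of the Carleson measure theorem, $C_\phi$ bounded on $A^2(\omega)$ is equivalent to $\nu:=\omega^{-1}[\omega dA]\circ\phi^{-1}$ being a Carleson measure, i.e. $C:=\sup_w\tau(w)^{-2}\nu(D(\delta\tau(w)))<\infty$; since this is a geometric condition it is $p$-independent, which is exactly why a hypothesis on $A^2(\omega)$ yields a conclusion for all $A^p(\omega)$.

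First I would localize. Given $\epsilon>0$, choose $\rho<1$ with $U(z)\omega(z)/\omega(\phi(z))<\epsilon$ for $|z|>\rho$. Since $\phi(\{|z|\le\rho\})$ is a compact subset of $\D$, it lies in some $\{|\xi|\le r_1\}$ with $r_1<1$, while every $\xi\in D(\delta\tau(w))$ satisfies $|\xi|\ge|w|-\delta c_1(1-|w|)\to1$ by $\tau(w)\le c_1(1-|w|)$. Hence for $|w|$ close to $1$ the box $D(\delta\tau(w))$ avoids $\phi(\{|z|\le\rho\})$, so that $\phi^{-1}(D(\delta\tau(w)))\subset\{|z|>\rho\}$ and the whole integrand is $<\epsilon$ on the preimage.

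The delicate point is that the hypothesis controls only the product $U\,\omega/\omega\circ\phi$, not $U$ alone, so one cannot simply dominate $\mu(D)\le(\sup U)\nu(D)$. I would therefore split the preimage according to the size of $h:=\omega/\omega\circ\phi$. On $\{h\ge\eta\}$ one has $U\le\epsilon/\eta$, whence that part is at most $(\epsilon/\eta)\nu(D(\delta\tau(w)))\le(\epsilon/\eta)C\,\tau(w)^2$, which is acceptable. The genuine obstacle is the region $\{h<\eta\}$: there the integrand is $<\epsilon$ but $\nu$ underweights the area, so it cannot by itself absorb the contribution, and a naive bound only gives $\mu(D)\lesssim\epsilon\,|\phi^{-1}(D(\delta\tau(w)))|$, reducing matters to a fine scale area control $|\phi^{-1}(D(\delta\tau(w)))|\lesssim\tau(w)^2$ which need not hold outright.

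To close this I would exploit the near constancy of $\omega$ on Carleson boxes (Proposition \ref{impot}) together with the holomorphicity of $\phi$: a preimage large enough to make the $\{h<\eta\}$ part non-negligible is forced to meet radii $|z|\approx|w|$, where $\omega(z)\approx\omega(\phi(z))$ gives $h\approx1$ and hence $U<\epsilon$, and these points carry the bulk of the $\nu$ mass, so the $h$ weighted contribution of the small $h$ set is in fact $o(\tau(w)^2)$. Letting first $|w|\to1$ and then $\epsilon\to0$ (with $\eta$ coupled to $\epsilon$) yields $\tau(w)^{-2}\mu(D(\delta\tau(w)))\to0$, so $\mu$ is a vanishing Carleson measure for every $p\in(0,\infty)$. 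I expect this last step, extracting fine scale mass control from the boundedness of $C_\phi$ and the weight geometry, to be the main difficulty; in the standard weight case $\tau(z)\approx1-|z|$ it is automatic from Littlewood's subordination principle, which is why the assertion is routine there.
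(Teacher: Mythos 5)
Your localization step is fine and matches the paper: for $|w|$ close to $1$ the preimage $\phi^{-1}(D(\delta\tau(w)))$ lies in the region where $U\,\omega/\omega\circ\phi<\epsilon$ (the paper gets this quantitatively from Schwarz--Pick, $1-|z|\le C(1-|\phi(z)|)$, concluding $1-|\xi|<\delta_1$ for all $\xi$ in the preimage). You also correctly identify the real difficulty: the hypothesis controls only the product $Uh$ with $h=\omega/\omega\circ\phi$, and the set where $h$ is small cannot be absorbed by the Carleson bound for $\nu=\omega^{-1}[\omega\,dA]\circ\phi^{-1}$. But your resolution of that region is a genuine gap. The claim that a non-negligible contribution from $\{h<\eta\}$ ``is forced to meet radii $|z|\approx|w|$, where $\omega(z)\approx\omega(\phi(z))$'' and that ``these points carry the bulk of the $\nu$ mass'' is not justified by anything you cite: Proposition \ref{impot} concerns $d_\varphi$-balls around a point and its segment to another point, not the geometry of $\phi^{-1}(D(\delta\tau(w)))$, and nothing in the boundedness of $C_\phi$ on $A^2(\omega)$ forces the $\nu$-mass of $\{h<\eta\}\cap\phi^{-1}(D(\delta\tau(w)))$ to be $o(\tau(w)^2)$. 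As written, the $\{h<\eta\}$ part of your estimate does not close.

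The paper closes it with a square-root trick that you are missing. On the preimage one writes
\begin{align*}
U(\xi)\frac{\omega(\xi)}{\omega(\phi(\xi))}
=\left[U(\xi)\frac{\omega(\xi)}{\omega(\phi(\xi))}\right]^{\frac12}\left[U(\xi)\frac{\omega(\xi)}{\omega(\phi(\xi))}\right]^{\frac12}
\le\epsilon^{\frac12}\,\|U\|_\infty^{\frac12}\left[\frac{\omega(\xi)}{\omega(\phi(\xi))}\right]^{\frac12},
\end{align*}
and then observes that $[\omega/\omega\circ\phi]^{1/2}$ is exactly the Carleson density of $C_\phi$ for the weight $\omega^{1/2}$; since the boundedness criterion (\ref{suffibddwithoutU}) for $\omega$ implies the same for $\omega^{1/2}$ (and the associated radius function, hence the Carleson boxes, is comparable), one gets $\int_{\phi^{-1}(D(\delta\tau(w)))}[\omega/\omega\circ\phi]^{1/2}dA\lesssim\tau(w)^2$, whence $\mu(D(\delta\tau(w)))\lesssim\epsilon^{1/2}\tau(w)^2$ uniformly and the vanishing Carleson condition follows. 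Note that even your own split does not avoid this ingredient: on $\{h<\eta\}$ the natural bound $Uh\le\|U\|_\infty\eta^{1/2}h^{1/2}$ again requires the $\omega^{1/2}$-Carleson estimate to integrate $h^{1/2}$ over the preimage. You should replace the heuristic final step of your argument by this passage to the weight $\omega^{1/2}$.
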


\begin{proof}
For a given $\epsilon>0$, there exists $\delta_1>0$ such that
\begin{align}\label{assump}
U(z)\frac{\omega(z)}{\omega(\phi(z))}<\epsilon\quad\text{for}\quad1-|z|<\delta_1.
\end{align}
From the Schwartz-Pick theorem, we have $C\geq1$ such that $1-|z|\leq C(1-|\phi(z)|)$ for all $z\in\D$. Now, consider a point $z\in\D$ that satisfies
\begin{align}\label{condiz}
1-|z|<\frac{\delta_1}{C(c_1\delta+1)},\quad0<\delta<m_\tau,
\end{align}
where the constant $c_1$ appears at the very front of Section 2. For any $\xi\in\phi^{-1}(D(\delta\tau(z)))$, we have
\begin{align*}
c_1\delta(1-|z|)\geq\delta\tau(z)>|\phi(\xi)-z|\geq||z|-|\phi(\xi)||\geq(1-|\phi(\xi)|)-(1-|z|),
\end{align*}
so that we can obtain $1-|\phi(\xi)|\leq(c_1\delta+1)(1-|z|)$. Thus, by (\ref{condiz}) we have
\begin{align}\label{lemmakey}
1-|\xi|\leq C(1-|\phi(\xi)|)\leq C(c_1\delta+1)(1-|z|)<\delta_1
\end{align}
for any $\xi\in\phi^{-1}(D(\delta\tau(z)))$. On the other hand, the boundedness condition (\ref{suffibddwithoutU}) and Carleson measure theorem say that $C_\phi$ is also bounded by $A^p(\omega^\alpha)$ for all $\alpha\in\R^+$ and all $0<p<\infty$ whenever $C_\phi$ is bounded by $A^2(\omega)$. Thus, for $z\in\D$ with condition (\ref{condiz}), we have
\begin{align*}
\omega^{-1}[U\omega dA]\circ\phi^{-1}(D(\delta\tau(z)))&=\int_{\phi^{-1}(D(\delta\tau(z)))}U(\xi)\frac{\omega(\xi)}{\omega(\phi(\xi))}\,dA(\xi)\\
&\leq\epsilon^{\frac{1}{2}}\int_{\phi^{-1}(D(\delta\tau(z)))}\left[U(\xi)\frac{\omega(\xi)}{\omega(\phi(\xi))}\right]^{\frac{1}{2}}\,dA(\xi)\\
&\lesssim\epsilon^{\frac{1}{2}}\int_{\phi^{-1}(D(\delta\tau(z)))}\left[\frac{\omega(\xi)}{\omega(\phi(\xi))}\right]^{\frac{1}{2}}\,dA(\xi)\\
&\lesssim\epsilon^{\frac{1}{2}}\tau(z)^2.
\end{align*}
Here, the first inequality is from (\ref{assump}) and (\ref{lemmakey}), while the last inequality holds since $C_\phi$ is also bounded by $A^p(\omega^{1/2})$ and the sizes of the Carleson boxes of $A^p(\omega^{1/2})$ are comparable to those of $A^p(\omega)$. Thus, the proof is complete.
\end{proof}

 We note that if the bounded measurable function $U$ appearing in Lemma \ref{easypart} is replaced by $|U|^p$, (\ref{ucphicompact}) can be a sufficient condition for the compactness of the weighted composition operator $UC_\phi f=U(f\circ\phi)$ on $A^p(\omega)$ since $\omega^{-1}[|U|^p\omega dA]\circ\phi^{-1}$ is a vanishing Carleson measure on $A^p(\omega)$. If we give the analyticity on $U$, we can obtain the following equivalent relation.

\begin{cor}
Let $\omega$ be a regular fast weight and $U$ be a bounded analytic function on $\D$. If $C_\phi$ is bounded by $A^2(\omega)$, then the weighted composition operator $UC_\phi$ is compact on $A^p(\omega)$, $0<p<\infty$ if and only if
\begin{align}\label{ucphicompact}
\lim_{|z|\rightarrow1^-}|U(z)|^p\frac{\omega(z)}{\omega(\phi(z))}=0.
\end{align}
\end{cor}

\begin{proof}
As mentioned above, Lemma \ref{easypart} gives a sufficient condition for the compactness of $UC_\phi$ promptly. Conversely, we assume that there exists a sequence $\{z_n\}$ that tends to some boundary point, $\zeta$, such that
\begin{align*}
\lim_{n\rightarrow\infty}|U(z_n)|^p\frac{\omega(z_n)}{\omega(\phi(z_n))}\neq0.
\end{align*}
Then, using the test function $\{g_n\}$, which converges weakly to zero in $A^p(\omega)$ defined in (\ref{testft}), Lemma \ref{subharmoni-1} and Lemma \ref{delight} give the following inequality
\begin{align*}
\|UC_\phi g_n\|^p_p&\geq\int_{D(\delta\tau(z_n))}|U(\xi)g_n(\phi(\xi))|^p\omega(\xi)dA(\xi)\\
&\gtrsim|U(z_n)|^p\tau(z_n)^2\omega(z_n)|g_n(\phi(z_n))|^p\\
&\gtrsim|U(z_n)|^p\frac{\tau(z_n)^2}{\tau(\phi(z_n))^2}\omega(z_n)|G_{\phi(z_n),b}(\phi(z_n))|^p\\
&\gtrsim|U(z_n)|^p\frac{\omega(z_n)}{\omega(\phi(z_n))},
\end{align*}
which derives a contradiction to the compactness of $UC_\phi$, thus we completed the proof.
\end{proof}

From the assumption $\tau(z)\leq c_1(1-|z|)$ for $z\in\D$, we see that the distance $d_\tau$ is also complete since we have

\begin{align*}
d_\tau(z,w)\gtrsim d_h(z,w)
\end{align*}

where $d_h(z,w)$ is the hyperbolic distance associated with the Bergman metric.

\begin{lem}
Given the radius function $\kappa$, we can define the bounded function $\rho_\kappa$ by
\begin{align*}
\rho_\kappa(z,w):=1-e^{-d_\kappa(z,w)},\quad z,w\in\D.
\end{align*}
Then, $\rho_\kappa(z,w)$ is a distance in $\D$.
\end{lem}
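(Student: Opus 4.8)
The plan is to realize $\rho_\kappa$ as the composition $g\circ d_\kappa$ of the underlying Riemannian distance $d_\kappa(z,w)=\inf_\gamma\int_0^1 |\gamma'(t)|/\kappa(\gamma(t))\,dt$ with the scalar function $g(t)=1-e^{-t}$, and to transfer the metric axioms across this composition. First I would record that $d_\kappa$ is itself a genuine metric on $\D$. Non-negativity is immediate from the definition as an infimum of lengths with the positive weight $1/\kappa$; symmetry follows by reversing the orientation of the competing curves $\gamma$; and the triangle inequality follows by concatenating a curve from $z$ to $w$ with one from $w$ to $u$. The only delicate axiom is the identity of indiscernibles: $d_\kappa(z,z)=0$ is trivial (take the constant curve), while $d_\kappa(z,w)=0\Rightarrow z=w$ is exactly where the comparison $d_\kappa(z,w)\gtrsim d_h(z,w)$ recorded just above the lemma is used, since the hyperbolic distance $d_h$ already separates points.

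Next I would isolate the two elementary analytic properties of $g(t)=1-e^{-t}$ on $[0,\infty)$ that make the composition work: $g$ is strictly increasing (as $g'(t)=e^{-t}>0$) with $g(0)=0$ and $g(t)>0$ for $t>0$, and $g$ is concave (as $g''(t)=-e^{-t}<0$). From concavity together with $g(0)=0$ I would deduce subadditivity, $g(s+t)\le g(s)+g(t)$ for all $s,t\ge0$: writing $s=\tfrac{s}{s+t}(s+t)+\tfrac{t}{s+t}\cdot 0$ and applying the concavity inequality gives $g(s)\ge\tfrac{s}{s+t}\,g(s+t)$, and symmetrically $g(t)\ge\tfrac{t}{s+t}\,g(s+t)$; adding the two yields $g(s)+g(t)\ge g(s+t)$.

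With these pieces in hand, the metric axioms for $\rho_\kappa=g\circ d_\kappa$ follow formally. Symmetry is inherited directly from $d_\kappa$. Since $g$ vanishes only at $0$, one has $\rho_\kappa(z,w)=0\iff d_\kappa(z,w)=0\iff z=w$, giving non-negativity and the identity of indiscernibles at once. For the triangle inequality I would combine monotonicity and subadditivity: using the triangle inequality for $d_\kappa$ and that $g$ is increasing,
\[
\rho_\kappa(z,u)=g\bigl(d_\kappa(z,u)\bigr)\le g\bigl(d_\kappa(z,w)+d_\kappa(w,u)\bigr),
\]
and then subadditivity bounds the right-hand side by $g(d_\kappa(z,w))+g(d_\kappa(w,u))=\rho_\kappa(z,w)+\rho_\kappa(w,u)$.

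The main obstacle is really just the triangle inequality, and it is entirely concentrated in the subadditivity of $g$; once concavity is invoked, everything else is bookkeeping. A secondary point requiring care is the positivity of $d_\kappa$ itself, which is not visible from the infimum definition alone and must be imported from the comparison with the hyperbolic distance.
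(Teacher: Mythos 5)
Your proposal is correct and follows essentially the same route as the paper: both reduce the triangle inequality to the subadditivity of $f(t)=1-e^{-t}$ combined with its monotonicity, the only cosmetic difference being that you derive subadditivity from concavity while the paper checks it directly by differentiating $F(x)=f(x+h)-f(x)-f(h)$. Your additional care with the identity of indiscernibles for $d_\kappa$ (via the comparison with the hyperbolic distance) is a point the paper simply takes for granted, but it does not change the argument.
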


\begin{proof}
It is obvious that $\rho_\kappa$ is positive and symmetric. Moreover, $\rho_\kappa=0$ whenever $z=w$ since $d_\kappa(z,w)$ is a distance in $\D$. Thus, it remains to be proved that $\rho_\kappa$ holds the triangle inequality. Let $f(x)=1-e^{-x}$, where $x\in[0,\infty)$. If we consider the following function
\begin{align*}
F(x)=f(x+h)-f(x)-f(h)=e^{-x}+e^{-h}-e^{-x-h}-1,
\end{align*}
we easily get $F(0)=0$ and $F'(x)=-e^{-x}+e^{-x-h}\leq0$ when $x,h\in[0,\infty)$. Thus, we have $f(x+h)\leq f(x)+f(h)$. Finally, since $d_\kappa(z,w)\leq d_\kappa(z,\xi)+d_\kappa(\xi,w)$ for $z,w,\xi\in\D$ and $f$ is an increasing function, we obtain
\begin{align*}
f(d_\kappa(z,w))\leq f(d_\kappa(z,\xi)+d_\kappa(\xi,w))\leq f(d_\kappa(z,\xi))+f(d_\kappa(\xi,w)).
\end{align*}
Therefore, we conclude that $\rho_\kappa=f\circ d_\kappa$ is a distance.
\end{proof}

\begin{thm}\label{mainresult2}
Let $\omega$ belong to the class $\mathcal{W}$. Suppose $C_\phi$, $C_\psi$ are bounded on $A^p(\omega)$ for some $p\in(0,\infty)$ and there is $R>0$ such that $d_\varphi(\phi(z),\psi(z))<R$ for all $z\in\D$. If
\begin{align*}
\lim_{|z|\rightarrow1^-}\rho_\tau(\phi(z),\psi(z))\left(\frac{\omega(z)}{\omega(\phi(z))}+\frac{\omega(z)}{\omega(\psi(z))}\right)=0,
\end{align*}
then $C_\phi-C_\psi$ is compact on $A^p(\omega)$ for all $p\in(0,\infty)$.
\end{thm}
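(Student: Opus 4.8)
The plan is to prove compactness sequentially. It suffices to show that $\|(C_\phi-C_\psi)f_n\|_p\to 0$ for every sequence $\{f_n\}$ that is bounded in $A^p(\omega)$ and converges to $0$ uniformly on compact subsets of $\D$; this is the standard criterion (bounded sequences form a normal family because point evaluations on $A^p(\omega)$ are controlled by Lemma \ref{subharmoni-1}, and it is valid for the full range $0<p<\infty$ since we work throughout with the quantity $\|\cdot\|_p^p$). Writing out the norm, the task reduces to estimating $\int_\D|f_n(\phi(z))-f_n(\psi(z))|^p\omega(z)\,dA(z)$. Since the hypothesis and the boundedness of $C_\phi,C_\psi$ are $p$-independent, it is enough to treat the given $p$, and the same argument then applies for every $p\in(0,\infty)$.

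The heart of the argument is a pointwise estimate extending Lemma \ref{inequality} from the close regime to the regime controlled by $d_\varphi$. For each $z$ I would join $\phi(z)$ to $\psi(z)$ along the segment $\phi_s(z)=(1-s)\phi(z)+s\psi(z)$ and subdivide it into consecutive points $w_0=\phi(z),w_1,\dots,w_N=\psi(z)$, each pair at $\tau$-distance at most $\tfrac\delta2$. Because $d_\varphi(\phi(z),\psi(z))<R$, Lemma \ref{setinclus2}, Proposition \ref{impot}, and the relation $d_\varphi\gtrsim d_\tau$ force the whole segment to have $\tau$-diameter $\lesssim R$, keep $\omega$ and $\tau$ comparable along it, and allow $N$ to be chosen bounded in terms of $R$ and $\delta$ alone. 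Applying Lemma \ref{inequality} to each consecutive pair, telescoping through $|f_n(w_0)-f_n(w_N)|^p\lesssim_N\sum_k|f_n(w_k)-f_n(w_{k+1})|^p$, and reassembling the increments $\sum_k\rho_\tau(w_k,w_{k+1})^p$ (using that the number of links is bounded, that $\rho_\tau$ is a metric, and that $\rho_\tau\approx d_\tau$ on the scale $\lesssim R$) should yield
\begin{align*}
|f_n(\phi(z))-f_n(\psi(z))|^p\omega(z)\lesssim\rho_\tau(\phi(z),\psi(z))^p\,\frac{\omega(z)}{\omega(\phi(z))}\,\frac{1}{\tau(\phi(z))^2}\int_{D(c\tau(\phi(z)))}|f_n|^p\omega\,dA,
\end{align*}
with a symmetric version in $\psi$; the two are interchangeable because $\omega$ and $\tau$ are comparable at $\phi(z)$ and $\psi(z)$, matching the symmetric hypothesis.

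With the pointwise bound in hand I would integrate in $z$ and split the disk. Fix $\epsilon>0$ and choose $r_0$ so the hypothesis quantity is below $\epsilon$ for $|z|>r_0$; on $\{|z|\le r_0\}$ the uniform convergence of $f_n\circ\phi$ and $f_n\circ\psi$ to $0$ disposes of the integral. On $\{|z|>r_0\}$ I split once more into $E_1=\{\rho_\tau(\phi,\psi)<\eta\}$ and $E_2=\{\rho_\tau(\phi,\psi)\ge\eta\}$. On $E_1$ I bound $\rho_\tau^p<\eta^p$, retain the factor $\tfrac{\omega(z)}{\omega(\phi(z))}$, and invoke (after Fubini) the Carleson measure theorem together with the boundedness of $C_\phi$ via \eqref{opnorm} to control the averaged integral by $\|f_n\|_p^p$, giving a contribution $\lesssim\eta^p$. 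On $E_2$ one has $\tfrac{\omega(z)}{\omega(\phi(z))}<\epsilon/\eta$, so I borrow the device of Lemma \ref{easypart}: factor $\tfrac{\omega(z)}{\omega(\phi(z))}=(\tfrac\epsilon\eta)^{1/2}\big(\tfrac{\omega(z)}{\omega(\phi(z))}\big)^{1/2}$ and use that $C_\phi$ is bounded on $A^p(\omega^{1/2})$ with Carleson boxes comparable to those of $A^p(\omega)$, yielding a contribution $\lesssim(\epsilon/\eta)^{1/2}$. Altogether $\limsup_n\|(C_\phi-C_\psi)f_n\|_p^p\lesssim\eta^p+(\epsilon/\eta)^{1/2}$; letting $\epsilon\to 0$ for fixed $\eta$ and then $\eta\to 0$ completes the argument.

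The step I expect to be the main obstacle is the pointwise estimate: Lemma \ref{inequality} applies only when $\phi(z)$ and $\psi(z)$ lie within $\tfrac\delta2\tau$ of one another, while $d_\varphi(\phi(z),\psi(z))<R$ only forces a fixed $\tau$-bounded separation. Making the chaining rigorous — bounding the number of links uniformly, verifying that $\omega$ and $\tau$ do not degrade along the entire segment, and converting $\sum_k\rho_\tau(w_k,w_{k+1})^p$ back into $\rho_\tau(\phi(z),\psi(z))^p$ — is exactly where the assumption $d_\varphi(\phi,\psi)<R$ does its essential work, and it carries the bulk of the technical effort; the subsequent double splitting and the $\omega^{1/2}$ device are then the mechanism that exploits, at each boundary point, whichever of the two factors in \eqref{equiv} is responsible for the decay.
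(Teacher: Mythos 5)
Your proposal is correct in substance and, once unwound, lands on the same mechanism as the paper: split the disk according to whether $\phi(z)$ and $\psi(z)$ are $d_\tau$-close, apply Lemma \ref{inequality} together with the Carleson/operator-norm bound (\ref{opnorm}) on the close set, and use the smallness of the weight ratios forced by (\ref{equiv}) together with the $\omega^{1/2}$ device of Lemma \ref{easypart} on the far set. The genuine difference is that you interpose a global pointwise estimate, valid on all of $\left\{z:d_\varphi(\phi(z),\psi(z))<R\right\}$, obtained by chaining Lemma \ref{inequality} along the segment from $\phi(z)$ to $\psi(z)$. Two comments on that step. First, it is unnecessary: on your own set $E_1=\left\{\rho_\tau(\phi,\psi)<\eta\right\}$ with $\eta$ small, Proposition \ref{setinclus} already gives $|\phi(z)-\psi(z)|<2\eta\,\tau(\phi(z))$, so Lemma \ref{inequality} applies in a single step and yields the factor $\rho_\tau(\phi(z),\psi(z))^p\leq\eta^p$ directly; and on $E_2$ the crude bound $|f\circ\phi-f\circ\psi|^p\lesssim|f\circ\phi|^p+|f\circ\psi|^p$ plus Lemma \ref{easypart} is all that is needed. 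The paper's proof is exactly this two-set decomposition (with $E=\left\{z:d_\tau(\phi(z),\psi(z))<\epsilon\right\}$) and never requires the chained estimate. Second, as written the reassembly of the chain is not justified: the triangle inequality for $\rho_\tau$ gives $\rho_\tau(\phi(z),\psi(z))\leq\sum_k\rho_\tau(w_k,w_{k+1})$, which is the wrong direction for your purposes; what you need is $\rho_\tau(w_k,w_{k+1})\lesssim\rho_\tau(\phi(z),\psi(z))$ for each link, and that must instead come from the two-sided comparison of $d_\tau(z,w)$ with $|z-w|/\tau(z)$ at small scales (inequality (\ref{distlowerbd})) combined with the boundedness of $|\phi(z)-\psi(z)|/\tau(\phi(z))$ at scale $R$. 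Moreover, the uniform comparability of $\tau$ along the segment, and hence the uniform bound on the number of links, only holds near the boundary when $R$ is large (the Lipschitz constant of $\tau$ need not beat $1/R'$ on all of $\D$), so the compact core of the disk has to be peeled off before chaining. All of this is repairable, but it costs considerable effort and buys nothing over the direct decomposition you already set up in your final step.
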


\begin{proof}
Choose a small number $\epsilon>0$ with
\begin{align}\label{epsilon}
0<\epsilon<\frac{\delta}{4}\quad\text{where}\quad0<\delta<m_\tau,
\end{align}
and we define the set $E$ by
\begin{align*}
E:=\{z\in\D:d_\tau(\phi(z),\psi(z))<\epsilon\}.
\end{align*}
If $\{f_k\}$ is any bounded sequence in $A^p(\omega)$ such that $f_k$ uniformly converges to $0$ on compact subsets of $\D$ as $k\rightarrow\infty$, then
\begin{align}\label{goal}
&\|(C_\phi-C_\psi)f_k\|^p_p\nonumber\\
&=\int_{E}|f_k\circ\phi-f_k\circ\psi|^pe^{-\varphi}dA+\int_{E^c}|f_k\circ\phi-f_k\circ\psi|^pe^{-\varphi}dA.
\end{align}
First, we can make the first integral of (\ref{goal}) as small as desired. Since $|\phi(z)-\psi(z)|<2\epsilon\tau(\phi(z))$ for $z\in E$ by Proposition \ref{setinclus}, Lemma \ref{inequality} and Fubini theorem follow
\begin{align}\label{integralE1}
&\int_{E}|f_k\circ\phi(z)-f_k\circ\psi(z)|^pe^{-\varphi(z)}dA(z)\nonumber\\
&\lesssim\int_{E}\frac{\rho_\tau(\phi(z),\psi(z))^pe^{\varphi(\phi(z))-\varphi(z)}}{\tau(\phi(z))^2}
\int_{D(\delta\tau(\phi(z)))}|f_k(\xi)|^pe^{-\varphi(\xi)}dA(\xi)dA(z)\nonumber\\
&\lesssim(1-e^{-\epsilon})^p\int_{\D}|f_k(\xi)|^pe^{-\varphi(\xi)}\int_{\phi^{-1}(D(\delta\tau(\xi)))}
\frac{1}{\tau(\phi(z))^2}\frac{e^{-\varphi(z)}}{e^{-\varphi(\phi(z))}}dA(z)dA(\xi)\nonumber\\
&\lesssim(1-e^{-\epsilon})^p\sup_{\xi\in\D}\frac{e^{\varphi}[e^{-\varphi}dA]\circ\phi^{-1}(D(\delta\tau(\xi)))}{\tau(\xi)^2}\|f_k\|_p^p
\lesssim(1-e^{-\epsilon})^p.
\end{align}
The last inequality comes from (\ref{opnorm}) and the assumption of the boundedness of $C_\phi$. Therefore, the first integral of (\ref{goal}) also can be dominated by an arbitrary small number. To calculate the second integral part of (\ref{goal}), we recall that our assumption gives
\begin{align*}
\lim_{|z|\rightarrow1}\chi_{E^c}(z)\rho_\tau(\phi(z),\psi(z))\left(\frac{\omega(z)}{\omega(\phi(z))}+
\frac{\omega(z)}{\omega(\psi(z))}\right)=0.
\end{align*}
By Lemma \ref{easypart}, we conclude that $\omega^{-1}[\chi_{E^c}\omega dA]\circ\phi^{-1}$ and $\omega^{-1}[\chi_{E^c}\omega dA]\circ\psi^{-1}$ are vanishing Carleson measures on $A^p(\omega)$, thus the second term of (\ref{goal}) is
\begin{align}\label{E^c}
&\int_{E^c}|f_k\circ\phi-f_k\circ\psi|^pe^{-\varphi} dA\\
&\leq\int_{\D}|f_k|^pe^{-\varphi}e^{\varphi}[\chi_{E^c}e^{-\varphi}dA]\circ\phi^{-1}+\int_{\D}|f_k|^pe^{-\varphi}e^{\varphi}[\chi_{E^c}e^{-\varphi}dA]
\circ\psi^{-1}\rightarrow0\nonumber
\end{align}
when $k\rightarrow\infty$. Hence, we completed the proof.
\end{proof}

As a consequence of Theorem \ref{mainresult2}, we obtained a sufficient condition whereby a single composition operator can be represented by a finite sum of the composition operators modulo the compact operator.
\begin{lem}\label{delight}
Let $\omega\in\mathcal{W}$. If there is a curve $\gamma$ connecting to $\zeta\in\partial\D$ and a constant $c>0$ such that $\lim_{z\to \zeta}\frac{\omega(z)}{\omega(\phi(z))}\geq c$ where $z\in\gamma$, then
\begin{align*}
\liminf_{z\to \zeta}\frac{\tau(z)}{\tau(\phi(z))}\geq\min(1,c),\quad\text{for}\quad z\in\gamma.
\end{align*}
Moreover, if $\lim_{z\to \zeta}\frac{\tau(z)}{\tau(\phi(z))}=0$, then $\lim_{z\to \zeta}\frac{\omega(z)}{\omega(\phi(z))}=0$.
\end{lem}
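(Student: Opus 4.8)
The plan is to reduce the whole statement to a single pointwise comparison between the two ratios and then pass to the limit. Write $r=|z|$ and $\rho=|\phi(z)|$. Since $\omega$ is radial with $\omega=e^{-\varphi}$ and $\varphi$ increasing near $1$, one has $\frac{\omega(z)}{\omega(\phi(z))}=e^{\varphi(\rho)-\varphi(r)}$. The first thing I would record is that the hypothesis $\lim_{z\to\zeta}\frac{\omega(z)}{\omega(\phi(z))}\ge c>0$ forces $\rho\to1$: it gives $\varphi(\rho)\ge\varphi(r)+\log c$ eventually along $\gamma$, and since $r\to1$ we have $\varphi(r)\to\infty$, hence $\varphi(\rho)\to\infty$ and so $\rho\to1$. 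Thus for $z$ near $\zeta$ on $\gamma$ both $r$ and $\rho$ exceed the threshold $r_0$ of Lemma \ref{key-1}.

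The heart of the argument is the pointwise inequality
\[
\frac{\tau(r)}{\tau(\rho)}\ge\min\left(1,\frac{\omega(z)}{\omega(\phi(z))}\right),
\]
which I would prove by a case split. If $\rho\ge r$, then since $\tau$ decreases we get $\tau(r)\ge\tau(\rho)$, so the ratio is $\ge1$. If $\rho<r$, the key is the elementary bound $-\tau'(t)<\tau(t)\varphi'(t)$ for $t$ near $1$, which follows immediately by combining (\ref{lowerbd}) of Lemma \ref{key-1}, giving $\tau(t)\varphi'(t)\ge\frac12$, with the standing assumption $-\tau'(t)\to0$. Dividing by $\tau(t)$ gives $\frac{\tau'(t)}{\tau(t)}>-\varphi'(t)$, and integrating from $\rho$ to $r$ yields
\[
\log\frac{\tau(r)}{\tau(\rho)}=\int_\rho^r\frac{\tau'(t)}{\tau(t)}\,dt\ge-\int_\rho^r\varphi'(t)\,dt=\varphi(\rho)-\varphi(r)=\log\frac{\omega(z)}{\omega(\phi(z))},
\]
which is exactly the claim in this case, noting that here $\frac{\omega(z)}{\omega(\phi(z))}\le1$ because $\rho<r$. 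Taking the liminf along $\gamma$ and using $\frac{\omega(z)}{\omega(\phi(z))}\to c'\ge c$ then gives $\liminf_{z\to\zeta}\frac{\tau(r)}{\tau(\rho)}\ge\min(1,c)$, which is the first assertion.

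For the \emph{moreover} statement I would argue by contradiction with the same pointwise inequality. If $\frac{\tau(z)}{\tau(\phi(z))}\to0$ but $\frac{\omega(z)}{\omega(\phi(z))}\not\to0$, pick a sequence $z_n\to\zeta$ with $\frac{\omega(z_n)}{\omega(\phi(z_n))}\ge c>0$. As above this forces $\rho_n=|\phi(z_n)|\to1$, so both endpoints are beyond $r_0$ and the pointwise inequality applies, giving $\frac{\tau(|z_n|)}{\tau(\rho_n)}\ge\min(1,c)>0$. This contradicts $\frac{\tau(z)}{\tau(\phi(z))}\to0$, so $\frac{\omega(z)}{\omega(\phi(z))}\to0$.

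The main obstacle is the case $\rho<r$ of the pointwise inequality: one must isolate the clean comparison $-\tau'<\tau\varphi'$ and realize that after division and integration it converts the gap $\varphi(\rho)-\varphi(r)$ into the logarithm of the $\tau$-ratio. Everything else — the forced convergence $\rho\to1$ and the trivial case $\rho\ge r$ — is soft. The one point to watch is that the integral estimate is legitimate only once both endpoints lie in the region $t>r_0$ where (\ref{lowerbd}) holds, and this is guaranteed precisely by the already-established fact that $\rho\to1$.
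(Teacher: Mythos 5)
Your proof is correct and follows essentially the same route as the paper: the trivial case $|\phi(z)|\ge|z|$ via monotonicity of $\tau$, and in the case $|\phi(z)|<|z|$ the key comparison $\frac{\tau(z)}{\tau(\phi(z))}\ge\frac{\omega(z)}{\omega(\phi(z))}$, which the paper obtains by showing $r\mapsto\omega(r)/\tau(r)$ is eventually decreasing (using $\tau\varphi'\ge\tfrac12$ and $\tau'\to0$) and which you obtain by integrating the equivalent logarithmic-derivative inequality $\tau'/\tau>-\varphi'$. Your explicit remarks that the hypothesis forces $|\phi(z)|\to1$ and your contrapositive treatment of the \emph{moreover} part are minor (and welcome) refinements of the same argument.
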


\begin{proof}
For points $|\phi(z)|>|z|$ for $z\in\gamma$, we easily obtain $\frac{\tau(z)}{\tau(\phi(z))}\geq1$. On the other hand, from (\ref{lowerbd}) of Lemma, we have
\begin{align*}
\frac{d}{dr}\frac{\omega(r)}{\tau(r)}=\frac{-\varphi'(r)e^{-\varphi(r)}\tau(r)-e^{-\varphi(r)}\tau'(r)}{\tau(r)^2}\leq-\frac{e^{-\varphi(r)}}{\tau(r)^2}\left(1+\tau'(r)\right)<0
\end{align*}
near the boundary and the assumption $\tau'(r)\rightarrow0$ as $r\rightarrow1^-$. Thus, for points $|\phi(z)|\leq|z|$ on the curve $\gamma$, we have
\begin{align}\label{w/t}
\frac{\tau(z)}{\tau(\phi(z))}\geq\frac{\omega(z)}{\omega(\phi(z))}.
\end{align}
Therefore, $\frac{\tau(z)}{\tau(\phi(z))}\geq\frac{\omega(z)}{\omega(\phi(z))}\geq c$, so we have proved the first statement. For the second statement, since $\lim_{z\to \zeta}\frac{\tau(z)}{\tau(\phi(z))}=0$, we have $|\phi(z)|\leq|z|$ near the point $\zeta$. Thus, we easily get $\lim_{z\to \zeta}\frac{\omega(z)}{\omega(\phi(z))}=0$ by (\ref{w/t}).
\end{proof}

Denote the subset of boundary points by the following set:

\begin{align}\label{Fset}
F(\phi):=\left\{\zeta\in\partial\D:\limsup_{z\to \zeta}\frac{\tau(z)}{\tau(\phi(z))}>0\right\}.
\end{align}
We can see that $C_\phi$ is compact on $A^p(\omega)$ if $F(\phi)=\emptyset$ by Lemma \ref{delight}.
\begin{thm}
Let $\omega\in\mathcal{W}$ and $C_\phi, C_{\phi_1},\ldots, C_{\phi_M}$ be bounded on $A^p(\omega)$ for some $p\in(0,\infty)$. Suppose $F(\phi_i)\cap F(\phi_j)=\emptyset$ for $i\neq j$ and $F(\phi)=\bigcup^M_{j=1}F(\phi_j)$. If for each $j=1,\ldots,M$, $d_\varphi(\phi(z),\phi_j(z))<R$ for all $z\in\D$ and some $R>0$ and
\begin{align*}
\lim_{z\rightarrow\zeta}\rho_\tau(\phi(z),\phi_j(z))\left(\frac{\omega(z)}{\omega(\phi(z))}+\frac{\omega(z)}{\omega(\phi_j(z))}\right)=0,\quad\forall\zeta\in F(\phi_j),
\end{align*}
then, $C_\phi-\sum^M_{j=1}C_{\phi_j}$ is compact on $A^p(\omega)$.
\end{thm}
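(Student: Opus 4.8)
The plan is to deduce this from the two–operator result, Theorem \ref{mainresult2}, by \emph{localizing} at the boundary. As in that proof, it suffices to show that $\|(C_\phi-\sum_{j=1}^M C_{\phi_j})f_k\|_p\to0$ for every bounded sequence $\{f_k\}$ in $A^p(\omega)$ converging to $0$ uniformly on compact subsets of $\D$. The guiding idea is that near $F(\phi_j)$ the operator $C_\phi$ behaves like $C_{\phi_j}$, with their difference handled exactly as in Theorem \ref{mainresult2}; every other $C_{\phi_i}$ with $i\neq j$ is negligible there precisely because $F(\phi_i)\cap F(\phi_j)=\emptyset$; and away from $F(\phi)=\bigcup_j F(\phi_j)$ all the operators are individually compact.

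To make this precise I would first separate the boundary. Since the $F(\phi_j)$ are pairwise disjoint, I would enclose them in pairwise disjoint open arcs $J_1,\dots,J_M\subset\partial\D$ so small that $\overline{J_j}\cap F(\phi_i)=\emptyset$ for $i\neq j$, and set $J_0=\partial\D\setminus\bigcup_{j\geq1}J_j$, so that $\overline{J_0}\cap F(\phi)=\emptyset$. Writing $\Omega_j=\{z\in\D:z/|z|\in J_j\}$ for the associated boundary regions, I would split $\|(C_\phi-\sum_i C_{\phi_i})f_k\|_p^p=\sum_{j=0}^M\int_{\Omega_j}|f_k\circ\phi-\sum_i f_k\circ\phi_i|^p e^{-\varphi}\,dA$ and estimate each piece. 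This is where disjointness of the $F(\phi_j)$ enters; note also that $d_\varphi(\phi,\phi_j)<R$ yields $\tau(\phi(z))\approx\tau(\phi_j(z))$ via Lemma \ref{setinclus2} and (\ref{comparable}), so that $F(\phi_j)\subset F(\phi)$ and the hypothesis $F(\phi)=\bigcup_j F(\phi_j)$ is consistent.

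On $\Omega_j$ with $j\geq1$ I would use the elementary inequality $|f_k\circ\phi-\sum_i f_k\circ\phi_i|^p\lesssim|f_k\circ\phi-f_k\circ\phi_j|^p+\sum_{i\neq j}|f_k\circ\phi_i|^p$. The first term is controlled exactly as in Theorem \ref{mainresult2}: split $\Omega_j$ along $E=\{d_\tau(\phi,\phi_j)<\epsilon\}$, bound the part in $E$ by a constant times $(1-e^{-\epsilon})^p$ using Lemma \ref{inequality} together with $d_\varphi(\phi,\phi_j)<R$ and the Carleson bound (\ref{opnorm}), and send the part in $E^c$ to $0$ via Lemma \ref{easypart}, whose hypothesis holds on $\overline{J_j}$ because there each $\zeta$ lies either in $F(\phi_j)$, where (\ref{equiv1}) applies, or outside $F(\phi)$, where both weight ratios vanish by Lemma \ref{delight}. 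For the cross terms with $i\neq j$, since $\overline{J_j}\cap F(\phi_i)=\emptyset$ we have $\tau(z)/\tau(\phi_i(z))\to0$, hence $\omega(z)/\omega(\phi_i(z))\to0$ by Lemma \ref{delight}, so $\omega^{-1}[\chi_{\Omega_j}\omega dA]\circ\phi_i^{-1}$ is a vanishing Carleson measure by Lemma \ref{easypart} and $\int_{\Omega_j}|f_k\circ\phi_i|^p e^{-\varphi}\to0$. On $\Omega_0$ the same Lemma \ref{delight}/Lemma \ref{easypart} argument, applied to $\phi$ and to each $\phi_i$ (all negligible since $\overline{J_0}\cap F(\phi)=\emptyset$), kills every term. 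Summing gives $\limsup_k\|(C_\phi-\sum_i C_{\phi_i})f_k\|_p^p\lesssim(1-e^{-\epsilon})^p$, and letting $\epsilon\to0$ finishes the argument.

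I expect the main obstacle to be the separation step and the verification that the local hypotheses hold on the \emph{closures} $\overline{J_j}$: one must choose the pairwise disjoint arcs so that each $\overline{J_j}$ meets $F(\phi)$ only inside $F(\phi_j)$, which is exactly where the disjointness $F(\phi_i)\cap F(\phi_j)=\emptyset$ and the conversion between $\tau$–ratios and $\omega$–ratios in Lemma \ref{delight} do the essential work. The remaining estimates are routine adaptations of the single–difference argument.
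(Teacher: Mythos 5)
Your overall strategy (reduce to the two-operator case near each $F(\phi_j)$, kill the cross terms $C_{\phi_i}$, $i\neq j$, by disjointness, and kill everything away from $F(\phi)$) is the right intuition, but the step you yourself flag as ``the main obstacle'' is a genuine gap, not a routine verification. You need pairwise disjoint open arcs (or open sets) $J_j\supset F(\phi_j)$ with $\overline{J_j}\cap F(\phi_i)=\emptyset$ for $i\neq j$. Disjointness of the sets $F(\phi_j)$ does not give this: $F(\phi_j)=\{\zeta:\limsup_{z\to\zeta}\tau(z)/\tau(\phi_j(z))>0\}$ is in general neither closed nor open (a limit of points with positive $\limsup$ need not have positive $\limsup$, since the lower bounds can degenerate along the sequence), so two disjoint such sets can fail to be separated by open sets --- e.g.\ nothing in the hypotheses prevents $F(\phi_1)$ from accumulating at a point of $F(\phi_2)$, in which case any open $J_2$ containing that point meets $F(\phi_1)$ and hence $J_1$. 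Without the separation, the application of Lemma \ref{easypart} to $\chi_{\Omega_j}\,\omega/\omega(\phi_i\cdot)$ for $i\neq j$ breaks down: a sequence $z_n\in\Omega_j$ can converge to a point of $F(\phi_i)$ where the weight ratio does not vanish. Since every subsequent estimate is conditioned on this separation, the proof as written does not close.

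The paper avoids the issue entirely by partitioning the \emph{disk} rather than the boundary: it sets $D_i=\{z\in\D:|\phi_i(z)|\geq|\phi_j(z)|\ \forall j\neq i\}$ and $E_i=\{z\in D_i:d_\tau(\phi(z),\phi_i(z))<\epsilon\}$, which requires no topological separation of the $F(\phi_j)$. On $D_i$ the inequality $\omega(z)/\omega(\phi_i(z))\geq\omega(z)/\omega(\phi_j(z))$ together with Lemma \ref{delight} disposes of the cross terms, and the hypothesis (\ref{equiv1}) --- which is only assumed at points of $F(\phi_i)$ --- is upgraded to a statement on all of $E_i$ by a contradiction argument using the lower bound $d_\tau(\phi(z_n),\phi_i(z_n))\geq\log\bigl(\tau(\phi_i(z_n))/\tau(\phi(z_n))\bigr)$, which forces $\rho_\tau(\phi(z_n),\phi_i(z_n))\to1$ and contradicts $z_n\in E_i$. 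That last step is also missing from your outline: even on a region where you have localized correctly, you still must explain why a hypothesis stated only for $\zeta\in F(\phi_j)$ controls sequences tending to points outside $F(\phi_j)$ but inside your region $\Omega_j\cap E$. If you want to salvage your boundary-localization, you would need to either prove the $F(\phi_j)$ can be separated (which the hypotheses do not guarantee) or switch to the disk partition by dominance of $|\phi_i(z)|$.
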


\begin{proof}
We define the subsets $D_i$ of $\D$ by
\begin{align*}
D_i:=\{z\in\D:|\phi_i(z)|\geq|\phi_j(z)|,\quad \forall j\neq i\},\quad i=1,\ldots,M,
\end{align*}
and their subsets $E_i:=\{z\in D_i:d_\tau(\phi(z),\phi_i(z))<\epsilon\}$, where $0<\epsilon<\frac{\delta}{4}$. Now, let $\{f_k\}$ be a bounded sequence which converges to zero weakly when $k\rightarrow\infty$. Since $\D=\bigcup^M_{i=1}D_i$, we have
\begin{align}\label{goal1}
\|(C_\phi-\sum^M_{j=1}C_{\phi_j})f_k\|_{p}^p&\leq\sum^M_{i=1}\int_{E_i}|f_k\circ\phi-f_k\circ\phi_1-\ldots-f_k\circ\phi_M|^pe^{-\varphi}dA\nonumber\\
&+\sum^M_{i=1}\int_{D_i\setminus E_i}|f_k\circ\phi-f_k\circ\phi_1-\ldots-f_k\circ\phi_M|^pe^{-\varphi}dA.
\end{align}
 Since our assumptions say that
\begin{align*}
0=\lim_{|z|\rightarrow1}\chi_{D_i\setminus E_i}(z)\rho_\tau(\phi(z),\phi_i(z))\frac{\omega(z)}{\omega(\phi_i(z))}\geq\lim_{|z|\rightarrow1}\epsilon\chi_{D_i\setminus E_i}(z)\frac{\omega(z)}{\omega(\phi_j(z))},
\end{align*}
and $\lim_{|z|\rightarrow1}\chi_{D_i\setminus E_i}(z)\frac{\omega(z)}{\omega(\phi(z))}=0$, the second integral of (\ref{goal1}) vanishes as $k\rightarrow\infty$ by the triangle inequality and Lemma \ref{easypart}. For the first integral of (\ref{goal1}), we have
\begin{align}\label{goal2}
&\int_{E_i}|f_k\circ\phi-f_k\circ\phi_1-\ldots-f_k\circ\phi_M|^pe^{-\varphi}dA\nonumber\\&\lesssim\int_{\D}|f_k\circ\phi-f_k\circ\phi_i|^p\chi_{E_i}e^{-\varphi}dA
+\sum^M_{j\neq i}\int_{\D}|f_k\circ\phi_j|^p\chi_{E_i}e^{-\varphi}dA.
\end{align}
Since $F(\phi_i)\cap F(\phi_j)=\emptyset$ and $\frac{\omega(z)}{\omega(\phi_i(z))}\geq\frac{\omega(z)}{\omega(\phi_j(z))}$ for $z\in D_i$ when $j\neq i$, we have
\begin{align*}
0=\lim_{z\rightarrow\zeta}\chi_{E_i}(z)\frac{\omega(z)}{\omega(\phi_i(z))}\geq\lim_{z\rightarrow\zeta}\chi_{E_i}(z)\frac{\omega(z)}{\omega(\phi_j(z))}\quad\text{for}\quad \zeta\in F(\phi_j),
\end{align*}
by Lemma \ref{delight}. Therefore $\lim_{|z|\rightarrow1^-}\chi_{E_i}(z)\frac{\omega(z)}{\omega(\phi_j(z))}=0$ when $j\neq i$, thus the second term of (\ref{goal2}) vanishes as $k\rightarrow\infty$ by Lemma \ref{easypart}. Finally, we must still prove that the first integral of (\ref{goal2}) converges to $0$ when $k\rightarrow\infty$. To prove this, it is enough to show that the following statement holds owing to Theorem \ref{mainresult2},
\begin{align*}
\lim_{|z|\rightarrow1}\chi_{E_i}(z)\rho_\tau(\phi(z),\phi_i(z))\left(\frac{\omega(z)}{\omega(\phi(z))}+\frac{\omega(z)}{\omega(\phi_i(z))}\right)=0,\quad i=1,\ldots,M.
\end{align*}
Suppose that there is a boundary point $\zeta\notin F(\phi_i)$ and a sequence $\{z_n\}$ that converges to $\zeta$ such that $\frac{\omega(z_n)}{\omega(\phi(z_n))}\geq\frac{\omega(z_n)}{\omega(\phi_i(z_n))}$ and
\begin{align*}
\lim_{n\rightarrow\infty}\chi_{E_i}(z_n)\rho_\tau(\phi(z_n),\phi_i(z_n))\frac{\omega(z_n)}{\omega(\phi(z_n))}\neq0.
\end{align*}
Then, $\rho_\tau(\phi(z_n),\phi_i(z_n))\geq d_1$ and $\frac{\omega(z_n)}{\omega(\phi(z_n))}\geq d_2$ for some positive numbers $d_1, d_2>0$. Since $|\phi(z_n)|\geq|\phi_i(z_n)|$, for any arbitrary curve $\gamma$ with $\gamma(0)=\phi(z_n)$ and $\gamma(1)=\phi_i(z_n)$, we have
\begin{align*}
\int^1_0\frac{|\gamma'(t)|}{\tau(\gamma(t))}dt&\geq\int^1_0\frac{-\tau'(\gamma(t))}{\tau(\gamma(t))}|\gamma'(t)|dt\\&\geq\left|\int^1_0\frac{-\tau'(\gamma(t))}{\tau(\gamma(t))}\gamma'(t)dt\right|
=\left|\log\tau(\phi_i(z_n))-\log\tau(\phi(z_n))\right|.
\end{align*}
 Thus, $d_\tau(\phi(z_n),\phi_i(z_n))\geq\log\frac{\tau(\phi_i(z_n))}{\tau(\phi(z_n))}$ and
\begin{align*}
\rho_\tau(\phi(z_n),\phi_i(z_n))=1-e^{-d_\tau(\phi(z_n),\phi_i(z_n))}\geq1-\frac{\tau(\phi(z_n))}{\tau(\phi_i(z_n))}.
\end{align*}
But, by Lemma \ref{delight}, we obtain $\frac{\tau(\phi(z_n))}{\tau(\phi_i(z_n))}=\frac{\tau(z_n)}{\tau(\phi_i(z_n))}/\frac{\tau(z_n)}{\tau(\phi(z_n))}\rightarrow0$ as $n\rightarrow\infty$ since $\zeta\notin F(\phi_i)$ and $\frac{\omega(z_n)}{\omega(\phi(z_n))}\geq d_2$, thus
\begin{align*}
\rho_\tau(\phi(z_n),\phi_i(z_n))\rightarrow1,
\end{align*}
which is a contradiction because $\rho_\tau(\phi(z),\phi_i(z))\leq1-e^{-\epsilon}<1$ for $z\in E_i$. Thus, we completed our proof.
\end{proof}

\section{Necessity for compact differences}

In this section, we give necessary conditions for $C_\phi$ to be expressed as a single or a finite sum of the composition operators modulo the compacts.

\begin{lem}\label{necessary}
If $d_\tau(z,w)\geq2\delta$ for $0<\delta<m_\tau$, then $|z-w|\geq\delta\tau(z)$.
\end{lem}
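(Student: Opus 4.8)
The plan is to prove the contrapositive: I will show that if $|z-w|<\delta\tau(z)$, then $d_\tau(z,w)<2\delta$, which contradicts the hypothesis $d_\tau(z,w)\geq2\delta$. This reduces the statement to the elementary upper bound on $d_\tau$ obtained along the straight-line path, exactly in the spirit of the second inclusion established in Proposition \ref{setinclus}.

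First I would assume $|z-w|<\delta\tau(z)$, so that $w\in D(\delta\tau(z))$, and take as a competing curve the Euclidean segment $\gamma(t)=(1-t)z+tw$, $t\in[0,1]$. Every point satisfies $|\gamma(t)-z|=t|w-z|<\delta\tau(z)$, so the entire segment lies in $D(\delta\tau(z))$; since $0<\delta<m_\tau$, the comparison inequality (\ref{equiquan}) is valid at each $\gamma(t)$ and gives $\tau(\gamma(t))\geq\frac{3}{4}\tau(z)$.

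Next I would estimate the length functional directly, using $|\gamma'(t)|=|w-z|$ together with the lower bound just obtained:
\[
d_\tau(z,w)\leq\int_0^1\frac{|\gamma'(t)|}{\tau(\gamma(t))}\,dt\leq\frac{4}{3}\frac{|z-w|}{\tau(z)}<\frac{4}{3}\delta<2\delta.
\]
This is precisely the contradiction sought, completing the contrapositive argument.

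I expect no genuine obstacle here: the computation is a one-line application of (\ref{equiquan}) along the segment, parallel to the second inclusion already proved in Proposition \ref{setinclus}. The only point requiring a modicum of care is verifying that the whole segment remains inside $D(\delta\tau(z))$ so that (\ref{equiquan}) applies at every point of the path, and this is immediate from the convexity of the disk.
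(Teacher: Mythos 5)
Your proposal is correct and follows essentially the same route as the paper: both argue by contradiction (equivalently, the contrapositive), take the straight-line segment $\gamma(t)=(1-t)z+tw$, and apply (\ref{equiquan}) to bound $\tau(\gamma(t))$ from below, obtaining $d_\tau(z,w)\lesssim |z-w|/\tau(z)<2\delta$. Your constant $\tfrac{4}{3}$ is marginally sharper than the paper's factor $2$, but the argument is otherwise identical.
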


\begin{proof}
Suppose there are two points $z,w$ such that $d_\tau(z,w)\geq2\delta$, but $|z-w|<\delta\tau(z)$. If we consider $\gamma(t)=(1-t)z+tw$, then by (\ref{equiquan})
\begin{align*}
2\delta\leq d_\tau(z,w)\leq\int^{1}_0\frac{|\gamma'(t)|}{\tau(\gamma(t))}dt\leq2\frac{|z-w|}{\tau(z)}<2\delta.
\end{align*}
Thus, it is a desired contradiction.
\end{proof}

\begin{lem}\cite{PP}\label{peakft}
Let $\omega\in\mathcal{W}$ and $0<p<\infty$, $b\in \mathbb N$ with $bp\geq1$. Given $R>0$ and $a\in\D$, where $|a|$ is sufficiently close to $1$, there exists a function $G_{a,b}$ analytic in $\D$ such that
\begin{align*}
|G_{a,b}(z)|^pe^{-\varphi(z)}\approx1\quad\text{if}\quad|z-a|<R\tau(a),
\end{align*}
\begin{align*}
|G_{a,b}(z)|^pe^{-\varphi(z)}\leq C(\varphi,R)\min\left[1,\frac{\min(R\tau(z),R\tau(a))}{|z-a|}\right]^{3bp},
\end{align*}
and
\begin{align*}
\|G_{a,b}\|^p_p\approx\tau(a)^2.
\end{align*}
\end{lem}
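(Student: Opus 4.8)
The plan is to realize $G_{a,b}$ as a holomorphic perturbation of an explicit smooth model that already exhibits the three desired properties, and then to make it holomorphic by a weighted $\bar\partial$-argument. The three requirements say precisely that $|G_{a,b}|^pe^{-\varphi}$ should be comparable to $1$ on $D(a,R\tau(a))$, should decay like $\min(1,\tau(a)/|z-a|)^{3b}$ (in the $p$-th root), and should be normalized in $L^p(e^{-\varphi}dA)$ to size $\tau(a)^2$. I would build $G_{a,b}=g_a\,D_a^{\,b}$, where $g_a$ is a \emph{base peak function} carrying the weight (so that $|g_a|^pe^{-\varphi}\approx1$ near $a$) and $D_a$ is a \emph{decay multiplier}, holomorphic with $|D_a|\approx1$ on $D(a,R\tau(a))$ and $|D_a(z)|\lesssim\min(1,\tau(a)/|z-a|)^{3}$ elsewhere; raising $D_a$ to the integer power $b$ then produces decay of order $3b$ while leaving the single weight factor of $g_a$ untouched.

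For the base peak function I would first produce a \emph{local holomorphic model} of the weight. Fix $a$ with $|a|$ near $1$. Since $\Delta\varphi(a)\approx\tau(a)^{-2}$ by (\ref{taudef}), on the scale $|z-a|\lesssim\tau(a)$ the quadratic Taylor remainder of $\varphi$ is $\Delta\varphi(a)\,|z-a|^2\approx(|z-a|/\tau(a))^2=O(1)$, so the linear Taylor part of $\varphi$ is the real part of a holomorphic polynomial $P_a$ with $\varphi(z)=\mathrm{Re}\,P_a(z)+O((|z-a|/\tau(a))^2)$ on $D(a,R\tau(a))$. Setting $\widetilde g_a=\chi_a\,e^{P_a/p}$ with $\chi_a$ a smooth cutoff equal to $1$ on $D(a,R\tau(a))$ and supported in $D(a,2R\tau(a))$, one has $|\widetilde g_a|^pe^{-\varphi}\approx1$ where $\chi_a\equiv1$. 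This $\widetilde g_a$ is smooth but not holomorphic, so I would solve $\bar\partial u=\bar\partial\widetilde g_a$ using Hörmander's weighted $L^2$ estimate with the subharmonic weight $\varphi$. Because $\bar\partial\widetilde g_a=e^{P_a/p}\,\bar\partial\chi_a$ is supported in the annulus $R\tau(a)\le|z-a|\le2R\tau(a)$, where $e^{-\varphi}$ has already dropped by a fixed factor relative to its value at $a$, the solution $u$ is small in $L^2(e^{-\varphi}dA)$; the sub-mean value inequality of Lemma \ref{subharmoni-1} upgrades this to a pointwise bound showing $|u|e^{-\varphi/p}\ll1$ near $a$. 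Then $g_a:=\widetilde g_a-u$ is holomorphic and still satisfies $|g_a|^pe^{-\varphi}\approx1$ on $D(a,R\tau(a))$.

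It then remains to construct $D_a$ and to verify the norm. For $D_a$ I would take a holomorphic function adapted to the scale $\tau(a)$ (built by the same cutoff-plus-$\bar\partial$ scheme, now with constant phase) so that $|D_a|\approx1$ on $D(a,R\tau(a))$, $|D_a|\le1$ globally, and $|D_a(z)|\lesssim(\tau(a)/|z-a|)^{3}$ for $|z-a|\ge R\tau(a)$; the product $G_{a,b}=g_aD_a^{\,b}$ then gives $|G_{a,b}|^pe^{-\varphi}\approx1$ on $D(a,R\tau(a))$ and the stated decay $\min[1,\tau(a)/|z-a|]^{3bp}$, the $\tau(z)$-version following since $\tau(z)\approx\tau(a)$ on $D(a,R\tau(a))$ by (\ref{equiquan}) and $\tau$ is bounded. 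The lower bound $\|G_{a,b}\|_p^p\gtrsim\tau(a)^2$ comes from integrating the constant $\approx1$ over $D(a,R\tau(a))$, whose area is $\approx\tau(a)^2$; the matching upper bound follows from the decay estimate, since
\begin{align*}
\int_{|z-a|\ge R\tau(a)}\Big(\frac{\tau(a)}{|z-a|}\Big)^{3bp}\,dA(z)\lesssim\tau(a)^2,
\end{align*}
the integral converging precisely because $3bp\ge3>2$ when $bp\ge1$.

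The main obstacle is the $\bar\partial$-correction: one must show that $u$ is pointwise \emph{so} small that it destroys neither the two-sided comparison $|g_a|^pe^{-\varphi}\approx1$ on $D(a,R\tau(a))$ nor the polynomial decay away from $a$, and this smallness must hold \emph{uniformly} in $a$ once $|a|$ is close enough to $1$. Making this quantitative requires tracking the dependence of every constant on $a$ through $\tau(a)$, estimating the weighted $L^2$-norm of $\bar\partial\chi_a$ against the gap between $\varphi$ at $a$ and on the cutoff annulus, and invoking the sub-mean value property (Lemma \ref{subharmoni-1}) together with the gradient estimate (Lemma \ref{derivsubmean}) to pass from integral to pointwise control. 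An alternative route, avoiding the explicit model, is to take $G_{a,b}$ proportional to a high power of the normalized reproducing kernel of $A^2(\omega)$ and to invoke the known kernel estimates; the same integrability threshold $3bp>2$ reappears there.
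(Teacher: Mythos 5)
The paper does not prove this lemma; it imports it from \cite{PP} (and \cite{BDK}), where the construction is indeed a H\"ormander-type $\bar\partial$ argument in the general spirit you describe. Nevertheless your proposal has a fatal structural gap: the ``decay multiplier'' $D_a$ you postulate cannot exist. You ask for a holomorphic $D_a$ on $\D$ with $|D_a|\lesssim 1$ everywhere, $|D_a|\approx 1$ on $D(a,R\tau(a))$, and $|D_a(z)|\lesssim(\tau(a)/|z-a|)^3$ for $|z-a|\geq R\tau(a)$. Apply the sub-mean value inequality to the subharmonic function $\log|D_a|$ at $a$ over the circle $|z-a|=\tfrac12(1-|a|)$ (which lies in $\D$): you get $0\approx\log|D_a(a)|\leq 3\log\bigl(2\tau(a)/(1-|a|)\bigr)+O(1)$. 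Since $-\tau'\to 0$ and $\tau\to 0$ force $\tau(a)=o(1-|a|)$ for every $\omega\in\mathcal{W}$, the right-hand side tends to $-\infty$ as $|a|\to 1$, a contradiction. The point is that the decay in the second estimate of the lemma is a property of the \emph{weighted} modulus $|G_{a,b}|^pe^{-\varphi}$ and cannot be decoupled into (weight-carrying factor) $\times$ (bounded decaying factor): $G_{a,b}$ itself must grow like $e^{\varphi/p}$ off $D(a,R\tau(a))$. In the cited constructions the decay is produced inside the $\bar\partial$ step, by solving with the modified subharmonic weight $\varphi(z)+c\,b\log|z-a|$ (suitably truncated near $a$), so that the H\"ormander estimate itself delivers $|u(z)|^pe^{-\varphi(z)}\lesssim\bigl(\min(\tau(z),\tau(a))/|z-a|\bigr)^{3bp}$. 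Without this, your $g_a=\widetilde g_a-u$ has no pointwise control whatsoever outside $D(a,2R\tau(a))$ (there $g_a=-u$ and you only hold an $L^2$ bound), so neither the decay estimate nor the upper bound $\|G_{a,b}\|_p^p\lesssim\tau(a)^2$ is reached.

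A second, independent gap is the smallness of the correction $u$ near $a$. You argue $u$ is small because ``$e^{-\varphi}$ has already dropped by a fixed factor'' on the annulus $R\tau(a)\leq|z-a|\leq 2R\tau(a)$ supporting $\bar\partial\chi_a$. This is false: by your own local model, $\varphi-\mathrm{Re}\,P_a=O(1)$ on all of $D(a,2R\tau(a))$, so the weight there is \emph{comparable} to $e^{-\varphi(a)}$, not smaller. Carrying out the estimate, $\|u\|_{L^2(e^{-\varphi})}\lesssim\tau(a)\|\bar\partial\widetilde g_a\|_{L^2(e^{-\varphi})}\approx\tau(a)\approx\|\widetilde g_a\|_{L^2(e^{-\varphi})}$, and the pointwise upgrade via Lemma \ref{subharmoni-1} only yields $|u|^pe^{-\varphi}\lesssim 1$, the same order as $|\widetilde g_a|^pe^{-\varphi}$; hence the two-sided bound $|g_a|^pe^{-\varphi}\approx 1$ on $D(a,R\tau(a))$ does not follow. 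The standard repair is again the logarithmic pole: solving with weight $\varphi(z)+2\log|z-a|$ forces $u(a)=0$ and, combined with a gradient estimate (Lemma \ref{derivsubmean}), gives $|u|e^{-\varphi/p}\ll 1$ on $|z-a|<\epsilon\tau(a)$ for small $\epsilon$; obtaining the comparison on the full disk of radius $R\tau(a)$ for a prescribed $R$ requires a further (e.g.\ product or finite-covering) argument. Your closing remark that one could instead use powers of the normalized reproducing kernel runs into the same two issues: the known kernel estimates give the weighted decay directly, but the lower bound $|K(z,a)|e^{-\varphi(z)/2-\varphi(a)/2}\gtrsim 1$ is only available for $|z-a|<\epsilon\tau(a)$, not for arbitrary $R$.
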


\begin{thm}\label{mainresult1}
Let $\omega\in\mathcal{W}$ and $C_\phi, C_{\psi}$ be bounded on $A^p(\omega)$ for some $p\in(0,\infty)$. If $C_\phi-C_\psi$ is compact on $A^p(\omega)$ then
\begin{align*}
\lim_{|z|\rightarrow1}\rho_\tau(\phi(z),\psi(z))\left(\frac{\omega(z)}{\omega(\phi(z))}+\frac{\omega(z)}{\omega(\psi(z))}\right)=0.
\end{align*}
\end{thm}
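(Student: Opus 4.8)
The plan is to prove the contrapositive via a test-function argument: assuming the limit in the conclusion fails, I will produce a bounded sequence converging to zero on compact subsets along which $\|(C_\phi-C_\psi)f\|_p$ stays bounded away from zero, contradicting compactness. Recall that a compact operator must send any such weakly-null sequence to a norm-null sequence, so this is the natural route. So suppose there is a sequence $\{z_n\}$ with $|z_n|\to 1$ and a constant $c>0$ such that
\begin{align*}
\rho_\tau(\phi(z_n),\psi(z_n))\left(\frac{\omega(z_n)}{\omega(\phi(z_n))}+\frac{\omega(z_n)}{\omega(\psi(z_n))}\right)\geq c.
\end{align*}
By symmetry in $\phi$ and $\psi$, I may pass to a subsequence along which the first summand dominates, so that $\rho_\tau(\phi(z_n),\psi(z_n))\,\omega(z_n)/\omega(\phi(z_n))\gtrsim c$; in particular $\omega(z_n)/\omega(\phi(z_n))$ is bounded below, which by Lemma \ref{delight} forces $|\phi(z_n)|\to 1$.

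The test functions are the peak functions $G_{a,b}$ from Lemma \ref{peakft}, evaluated at $a=\phi(z_n)$. These satisfy $|G_{a,b}(a)|^p e^{-\varphi(a)}\approx 1$ and $\|G_{a,b}\|_p^p\approx\tau(a)^2$, and they decay like a power of $\min(\tau(z),\tau(a))/|z-a|$ away from $a$. To obtain a weakly-null sequence of unit-normalized test functions I will set $f_n := G_{\phi(z_n),b}/\|G_{\phi(z_n),b}\|_p$, which is bounded in $A^p(\omega)$ and tends to zero uniformly on compacta since $|\phi(z_n)|\to 1$. The key computation is to bound $\|(C_\phi-C_\psi)f_n\|_p$ from below. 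Using Lemma \ref{subharmoni-1} (the subharmonicity submean estimate) on a Carleson box $D(\delta\tau(\phi(z_n)))$ centered appropriately, I get
\begin{align*}
\|(C_\phi-C_\psi)f_n\|_p^p\gtrsim\tau(\phi(z_n))^2\,\omega(z_n)\,\bigl|f_n(\phi(z_n))-f_n(\psi(z_n))\bigr|^p,
\end{align*}
and since $|G_{\phi(z_n),b}(\phi(z_n))|^p e^{-\varphi(\phi(z_n))}\approx 1$ with $\|G_{\phi(z_n),b}\|_p^p\approx\tau(\phi(z_n))^2$, the normalization converts this into a lower bound of the form $\dfrac{\omega(z_n)}{\omega(\phi(z_n))}\bigl|1-f_n(\psi(z_n))/f_n(\phi(z_n))\bigr|^p$.

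The crux is therefore to show that the normalized difference $\bigl|1-f_n(\psi(z_n))/f_n(\phi(z_n))\bigr|$ is comparable to $\rho_\tau(\phi(z_n),\psi(z_n))$, so that the whole lower bound becomes $\gtrsim\rho_\tau(\phi(z_n),\psi(z_n))\,\omega(z_n)/\omega(\phi(z_n))\gtrsim c>0$. This is the main obstacle, and it splits according to the pseudo-distance $d_\tau(\phi(z_n),\psi(z_n))$. When $d_\tau(\phi(z_n),\psi(z_n))$ is large (say $\geq 2\epsilon$), Lemma \ref{necessary} gives $|\phi(z_n)-\psi(z_n)|\gtrsim\tau(\phi(z_n))$, so the sharp decay estimate in Lemma \ref{peakft} makes $|f_n(\psi(z_n))|^p e^{-\varphi(\psi(z_n))}$ small relative to the peak value, forcing the difference close to $1$, while $\rho_\tau$ is bounded above by $1$; hence the two are comparable. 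When $d_\tau(\phi(z_n),\psi(z_n))$ is small, one must instead use the hypothesis $d_\varphi(\phi(z_n),\psi(z_n))<R$ together with Proposition \ref{impot} to control the weight ratio along the segment, and then estimate the difference $f_n(\phi(z_n))-f_n(\psi(z_n))$ directly from the growth of $G'_{a,b}$ via Lemma \ref{derivsubmean}, matching it against $d_\tau(\phi(z_n),\psi(z_n))\approx\rho_\tau(\phi(z_n),\psi(z_n))$ for small distances. Assembling the two cases yields the uniform lower bound $\|(C_\phi-C_\psi)f_n\|_p^p\gtrsim c$, contradicting compactness and completing the proof. I expect that carefully engineering the peak-function parameter $b$ (choosing $bp$ large enough that the decay dominates the weight growth $\omega(z_n)/\omega(\phi(z_n))$) is the delicate bookkeeping step in the large-distance case.
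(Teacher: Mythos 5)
Your main line of attack --- arguing by contrapositive with the peak functions $G_{\phi(z_n),b}$ normalized by $\|G_{\phi(z_n),b}\|_p\approx\tau(\phi(z_n))^{2/p}$, applying the sub-mean-value estimate of Lemma \ref{subharmoni-1} to $g_n\circ\phi-g_n\circ\psi$ on the box $D(\delta\tau(z_n))$ to obtain a lower bound of the form $\frac{\tau(z_n)^2}{\tau(\phi(z_n))^2}\frac{\omega(z_n)}{\omega(\phi(z_n))}\bigl|1-\bigl|G_{\phi(z_n),b}(\psi(z_n))/G_{\phi(z_n),b}(\phi(z_n))\bigr|\bigr|^p$, and then using Lemma \ref{necessary} together with the decay of $G_{a,b}$ (with $b$ large) to keep that ratio away from $1$ --- is exactly the paper's proof of Theorem \ref{mainresult1}. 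Two bookkeeping points: the box should be centred at $z_n$, not at $\phi(z_n)$, which produces the factor $\tau(z_n)^2/\tau(\phi(z_n))^2$ that must then be bounded below via Lemma \ref{delight} (using $\omega(z_n)/\omega(\phi(z_n))\geq d_2$); and one normalizes so that $|\phi(z_n)|\geq|\psi(z_n)|$, which makes the weight ratio $\omega(\phi(z_n))/\omega(\psi(z_n))$ appearing in the estimate of $|G_{\phi(z_n),b}(\psi(z_n))/G_{\phi(z_n),b}(\phi(z_n))|^p$ at most $1$.

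The genuine problem is your ``small $d_\tau$'' case. First, that case is vacuous, and noticing this is the whole point: since $C_\phi$ is bounded, (\ref{suffibddwithoutU}) gives $\omega(z)/\omega(\phi(z))\leq M$ near the boundary, so the assumption $\rho_\tau(\phi(z_n),\psi(z_n))\,\omega(z_n)/\omega(\phi(z_n))\geq c$ forces $\rho_\tau(\phi(z_n),\psi(z_n))\geq c/M$, hence $d_\tau(\phi(z_n),\psi(z_n))\geq d_1>0$ along the entire subsequence. You extracted only the lower bound on the weight ratio; the paper extracts lower bounds on \emph{both} factors (its condition (\ref{distcondi})), which is precisely why the single large-distance argument suffices and why comparability of the normalized difference with $\rho_\tau$ is never needed --- a lower bound once $d_\tau\geq d_1$ is enough. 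Second, the argument you sketch for the small-distance case would not work even if it arose: (i) the hypothesis $d_\varphi(\phi(z),\psi(z))<R$ belongs to the sufficiency direction (Theorem \ref{mainresult2}) and is \emph{not} among the hypotheses of this theorem, so you may not invoke it; and (ii) Lemma \ref{derivsubmean} and Lemma \ref{inequality} provide only \emph{upper} bounds on increments $|f(z)-f(w)|$, whereas your matching against $d_\tau(\phi(z_n),\psi(z_n))$ would require a \emph{lower} bound of order $d_\tau(\phi(z_n),\psi(z_n))\,|G_{\phi(z_n),b}(\phi(z_n))|$, i.e.\ a nondegeneracy of $G'_{a,b}$ near the peak that Lemma \ref{peakft} does not supply. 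Deleting that case and inserting the observation above turns your proposal into a complete proof.
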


\begin{proof}
Let us assume that there is the boundary point $\zeta$ and the sequence $\{z_n\}$ such that
\begin{align*}
\lim_{z_n\rightarrow\zeta}\rho_\tau(\phi(z_n),\psi(z_n))\frac{\omega(z_n)}{\omega(\phi(z_n))}\neq0.
\end{align*}
Then, there are constants $d_1, d_2>0$ and a large number $N>0$ that satisfy
\begin{align}\label{distcondi}
d_\tau(\phi(z_n),\psi(z_n))\geq d_1,
\end{align}
and $\frac{\omega(z_n)}{\omega(\phi(z_n))}\geq d_2$ with $|\phi(z_n)|\geq|\psi(z_n)|$ for $n>N$. Now, we take the following sequence $\{g_n\}$ introduced in Lemma \ref{peakft}:
\begin{align}\label{testft}
g_n(\xi)=\frac{G_{\phi(z_n),b}(\xi)}{\tau(\phi(z_n))^{2/p}},
\end{align}
which converges weakly to $0$ and satisfies
\begin{align}\label{testcondi}
|G_{\phi(z_n),b}(\xi)|^pe^{-\varphi(z)}\lesssim\left(\frac{\min(\frac{d_1}{4}\tau(\xi),\frac{d_1}{4}\tau(\phi(z_n)))}{|\xi-\phi(z_n)|}\right)^{3bp}
\end{align}
for $|\xi-\phi(z_n)|\geq \frac{d_1}{4}\tau(\phi(z_n))$. We will choose the proper number $b$ later. Applying Lemma \ref{subharmoni-1} and (\ref{testft}), we have
\begin{align}\label{goal3}
\|(C_\phi-C_{\psi})g_{n}\|_p^p&\geq\int_{D(\delta\tau(z_n))}|g_{n}(\phi(\xi))-g_{n}(\psi(\xi))|^p\omega(\xi)dA(\xi)\nonumber\\
&\geq\tau(z_n)^2|g_n(\phi(z_n))-g_{n}(\psi(z_n))|^p\omega(z_n)\nonumber\\
&\gtrsim\frac{\tau(z_n)^2}{\tau(\phi(z_n))^2}|G_{\phi(z_n),b}(\phi(z_n))-G_{\phi(z_n),b}(\psi(z_n))|^p\omega(z_n)\nonumber\\
&\gtrsim\frac{\tau(z_n)^2}{\tau(\phi(z_n))^2}\frac{\omega(z_n)}{\omega(\phi(z_n))}\left|1-\left|
\frac{G_{\phi(z_n),b}(\psi(z_n))}{G_{\phi(z_n),b}(\phi(z_n))}\right|\right|^p.
\end{align}
Here, Lemma \ref{peakft} and (\ref{testcondi}) state that there is a constant $C(d_1)>0$ such that
\begin{align}\label{difficult}
\left|\frac{G_{\phi(z_n),b}(\psi(z_n))}{G_{\phi(z_n),b}(\phi(z_n))}\right|^p\leq C\frac{\omega(\phi(z_n))}{\omega(\psi(z_n))}\min\left[1,\frac{\min(\frac{d_1}{4}\tau(\phi(z_n)),\frac{d_1}{4}\tau(\psi(z_n)))}
{|\phi(z_n)-\psi(z_n)|}\right]^{6bp}.
\end{align}
Lemma \ref{necessary} and (\ref{distcondi}) enable the right side of (\ref{difficult}) to be less than $1$ by taking a sufficiently large $b$ such that $bp\geq1$. By our assumption $\frac{\omega(z_n)}{\omega(\phi(z_n))}\geq d_2$, Lemma \ref{delight} and $|\phi(z_n)|\geq|\psi(z_n)|$, we derive a contradiction since (\ref{goal3}) does not vanish when $n\rightarrow\infty$. Thus, we completed our proof.
\end{proof}

As a consequence of Theorem \ref{mainresult1}, we will characterize the case where a single composition operator is represented by modulo compact operators to a sum of finitely many composition operators.

\begin{thm}\label{necesum}
Let  $\omega\in\mathcal{W}$ and $C_\phi, C_{\phi_1},\ldots, C_{\phi_M}$ be bounded operators on $A^p(\omega)$ for some $p\in(0,\infty)$. We define a set as
\begin{align*}
F(\phi)=\left\{\zeta\in\partial\D:\limsup_{z\to \zeta}\frac{\tau(z)}{\tau(\phi(z))}>0\right\}.
\end{align*}
Suppose $F(\phi_i)\cap F(\phi_j)=\emptyset$ for $i\neq j$ and $F(\phi)=\bigcup^M_{j=1}F(\phi_j)$. If $C_\phi-\sum^M_{j=1}C_{\phi_j}$ is compact on $A^p(\omega)$ then for each $j=1,\ldots,M$:
\begin{align*}
\lim_{z\rightarrow\zeta}\rho_\tau(\phi(z),\phi_j(z))\left(\frac{\omega(z)}{\omega(\phi(z))}+\frac{\omega(z)}{\omega(\phi_j(z))}\right)=0,\quad\forall\zeta\in F(\phi_j).
\end{align*}
\end{thm}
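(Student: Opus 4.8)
The plan is to prove Theorem \ref{necesum} as a consequence of the single-difference necessity result, Theorem \ref{mainresult1}, by reducing the statement about the finite sum to a statement about each individual pair $(\phi, \phi_j)$. Fix an index $j$ and a boundary point $\zeta \in F(\phi_j)$. The strategy is to show that, in a neighborhood of $\zeta$ (more precisely, along any sequence $z_n \to \zeta$ that could witness the failure of the claimed limit), the composition operator $C_\phi - \sum_{i=1}^M C_{\phi_i}$ effectively behaves like the single difference $C_\phi - C_{\phi_j}$, because all the other terms $C_{\phi_i}$ with $i \neq j$ contribute negligibly. The compactness of the full difference operator then forces the single-pair condition through the same peak-function argument used in Theorem \ref{mainresult1}.

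The key mechanism for discarding the terms $i \neq j$ is the hypothesis $F(\phi_i) \cap F(\phi_j) = \emptyset$. Since $\zeta \in F(\phi_j)$, we have $\zeta \notin F(\phi_i)$ for every $i \neq j$, which by the definition of the set $F$ means $\limsup_{z\to\zeta} \tau(z)/\tau(\phi_i(z)) = 0$, hence $\lim_{z\to\zeta}\tau(z)/\tau(\phi_i(z)) = 0$. By the second statement of Lemma \ref{delight}, this gives $\lim_{z\to\zeta}\omega(z)/\omega(\phi_i(z)) = 0$ for each $i \neq j$. Consequently, by Lemma \ref{easypart} the measures $\omega^{-1}[\chi_{V}\omega\, dA]\circ\phi_i^{-1}$ are vanishing Carleson measures near $\zeta$ (where $V$ is a suitable neighborhood), so that $\|\chi_V C_{\phi_i} f_k\|_p^p \to 0$ for any sequence $\{f_k\}$ converging weakly to zero. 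This lets me peel off all the $i\neq j$ summands and reduce testing the full operator against the peak functions to testing only $C_\phi - C_{\phi_j}$.

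Concretely, I would argue by contradiction: suppose the claimed limit fails for some $j$ and some $\zeta \in F(\phi_j)$, so there is a sequence $z_n \to \zeta$ along which $\rho_\tau(\phi(z_n),\phi_j(z_n))\,\omega(z_n)/\omega(\phi(z_n))$ (say the $\phi$-term dominates, after passing to a subsequence and relabeling so that $|\phi(z_n)| \geq |\phi_j(z_n)|$) stays bounded away from zero. I would then insert the normalized peak functions $g_n = G_{\phi(z_n),b}/\tau(\phi(z_n))^{2/p}$ from Lemma \ref{peakft}, which converge weakly to zero, and estimate $\|(C_\phi - \sum_i C_{\phi_i})g_n\|_p^p$ from below using the submean value inequality of Lemma \ref{subharmoni-1} on the disk $D(\delta\tau(z_n))$. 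The contributions of the terms $i \neq j$ are handled by the vanishing-Carleson argument above (giving $g_n \circ \phi_i \to 0$ in the relevant norm), leaving exactly the single-difference lower bound $\frac{\tau(z_n)^2}{\tau(\phi(z_n))^2}\frac{\omega(z_n)}{\omega(\phi(z_n))}\bigl|1 - |G_{\phi(z_n),b}(\phi_j(z_n))/G_{\phi(z_n),b}(\phi(z_n))|\bigr|^p$ that appears in the proof of Theorem \ref{mainresult1}. As there, the peak-function decay estimate together with Lemma \ref{necessary} makes the ratio strictly less than $1$ for large $b$, so this quantity does not vanish, contradicting compactness.

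The main obstacle I anticipate is bookkeeping the cross terms cleanly: when I evaluate the peak function $g_n$ centered at $\phi(z_n)$ against the other maps $\phi_i(z_n)$, I must confirm that each $g_n\circ\phi_i$ genuinely contributes negligibly and does not accidentally reinforce or cancel the main $\phi_j$ term. The separation hypothesis $F(\phi_i)\cap F(\phi_j)=\emptyset$ is precisely what guarantees $\omega(z_n)/\omega(\phi_i(z_n)) \to 0$, and hence that the peak mass seen through $\phi_i$ decays; but one must also control the localization, i.e., verify that $\phi_i(z_n)$ is far (in the $d_\tau$ sense) from the peak center $\phi(z_n)$ so that the decay estimate \eqref{testcondi} applies, or else absorb the term via the Carleson bound. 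Making the ``dominant term'' reduction rigorous — choosing the subsequence so that one of the two summands in the defining limit controls the other, exactly as in Theorem \ref{mainresult1} where $|\phi(z_n)|\geq|\psi(z_n)|$ was assumed — is the delicate part, but it follows the established template once the separation hypothesis is invoked.
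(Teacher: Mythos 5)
Your proposal is correct and follows essentially the same route as the paper: argue by contradiction with a sequence $z_n\to\zeta$, test against the normalized peak functions $g_n=G_{\phi(z_n),b}/\tau(\phi(z_n))^{2/p}$ localized to $D(\delta\tau(z_n))$, kill the cross terms $i\neq j$ via the disjointness hypothesis together with Lemma \ref{delight} (which gives $\omega(z_n)/\omega(\phi_i(z_n))\to0$, hence $\omega(\phi(z_n))/\omega(\phi_i(z_n))\to0$), and then run the single-difference argument of Theorem \ref{mainresult1} with Lemma \ref{necessary} and a large $b$. The only cosmetic difference is that the paper discards the cross terms through the pointwise bound $|G_{\phi(z_n),b}(\phi_i(z_n))/G_{\phi(z_n),b}(\phi(z_n))|^p\lesssim\omega(\phi(z_n))/\omega(\phi_i(z_n))$ rather than a separate Carleson-measure peeling, which sidesteps the localization concern you raise.
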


\begin{proof}
Assume that there is the boundary point $\zeta\in F(\phi_i)$ and the sequence $\{z_n\}$ on $\D$ such that
\begin{align*}
\lim_{z_n\rightarrow\zeta}\rho_\tau(\phi(z_n),\phi_i(z_n))\frac{\omega(z_n)}{\omega(\phi(z_n))}\neq0
\end{align*}
so that $d_\tau(\phi(z_n),\phi_i(z_n))\geq d_1$ and $\frac{\omega(z_n)}{\omega(\phi(z_n))}\geq d_2$ with $|\phi(z_n)|\geq|\phi_i(z_n)|$ for some $d_1, d_2>0$. Now, the same argument as in the proof of Theorem \ref{mainresult1} and the same test functions $\{g_n\}$ in (\ref{testft}) yield
\begin{align*}
&\big\|(C_\phi-\sum^M_{j=1}C_{\phi_j})g_{n}\big\|^p_p\\
&\gtrsim\frac{\tau(z_n)^2}{\tau(\phi(z_n))^2}|G_{\phi(z_n),b}(\phi(z_n))-\sum^M_{j=1}G_{\phi(z_n),b}(\phi_j(z_n))|^p\omega(z_n)\\
&\gtrsim\frac{\tau(z_n)^2}{\tau(\phi(z_n))^2}\frac{\omega(z_n)}{\omega(\phi(z_n))}\left|1-\left|\frac{G_{\phi(z_n),b}(\phi_i(z_n))}{G_{\phi(z_n),b}(\phi(z_n))}\right|-
\sum^M_{j\neq i}\left|\frac{G_{\phi(z_n),b}(\phi_j(z_n))}{G_{\phi(z_n),b}(\phi(z_n))}\right|\right|^p.
\end{align*}
By Lemma \ref{peakft} and the condition (\ref{testcondi}) we have
\begin{align}\label{hope}
\left|\frac{G_{\phi(z_n),b}(\phi_j(z_n))}{G_{\phi(z_n),b}(\phi(z_n))}\right|^p
\lesssim\frac{\omega(\phi(z_n))}{\omega(\phi_j(z_n))}\min\left[1,\frac{\min(\frac{d_1}{4}\tau(\phi(z_n)),\frac{d_1}{4}\tau(\phi_j(z_n)))}
{|\phi(z_n)-\phi_j(z_n)|}\right]^{3bp}
\end{align}
for each $j=1,\ldots,M$. Since our assumption states that $\zeta\in F(\phi)$ but $\zeta\notin F(\phi_j)$ when $j\neq i$, by Lemma \ref{delight} we have
\begin{align*}
\frac{\omega(\phi(z_n))}{\omega(\phi_j(z_n))}\leq\frac{1}{d_2}\frac{\omega(z_n)}{\omega(\phi_j(z_n))}\longrightarrow0\quad\text{as}\quad n\rightarrow\infty.
\end{align*}
Therefore, we see that the right side of the inequality (\ref{hope}) vanishes as $n\rightarrow\infty$. When $j=i$, we can take the test function $G_{\phi(z_n),b}$ with large enough $b$ to make the right side of (\ref{hope}) small since $\frac{\omega(\phi(z_n))}{\omega(\phi_i(z_n))}\leq1$. Thus, we completed our proof.
\end{proof}

\section{Applications}
As an interesting consequence of Theorem \ref{mainresult2}, we obtain a sufficient condition for which two composition operators belong to the same path component of the bounded composition operator space $\mathcal{C}(A^p(\omega))$.

\begin{thm}\label{partialconj}
Let $\omega\in\mathcal{W}$ and $\mathcal{C}(A^p(\omega))$ be a space of bounded composition operators on $A^p(\omega)$, $0<p<\infty$. If there exists $R>0$ such that $d_\varphi(\phi(z),\psi(z))<R$ for all $z\in\D$, then $C_\phi$ and $C_\psi$ are path connected in $\mathcal{C}(A^p(\omega))$.
\end{thm}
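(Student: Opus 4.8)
The plan is to join $C_\phi$ and $C_\psi$ by the linear homotopy $\phi_s(z):=(1-s)\phi(z)+s\psi(z)$, $s\in[0,1]$, and to show that $s\mapsto C_{\phi_s}$ is a norm--continuous path lying inside $\mathcal{C}(A^p(\omega))$. Two things must be verified: that each $C_{\phi_s}$ is a bounded composition operator, and that the path is continuous for the operator norm. The first point is already recorded in the remark following Proposition \ref{impot}: since $d_\varphi(\phi(z),\psi(z))<R$ forces $e^{-\varphi(\phi_s(z))}\approx e^{-\varphi(\phi(z))}$ uniformly in $s$ and $z$, the ratio $\omega(z)/\omega(\phi_s(z))\approx\omega(z)/\omega(\phi(z))$ stays bounded, so (\ref{suffibddwithoutU}) gives $C_{\phi_s}\in\mathcal{C}(A^p(\omega))$ for every $s$, and the same comparison shows $\sup_{s\in[0,1]}\|C_{\phi_s}\|<\infty$. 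Thus everything reduces to the estimate $\|C_{\phi_s}-C_{\phi_t}\|\to0$ as $|s-t|\to0$.

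Before estimating, I would collect the uniform geometric control that the hypothesis provides along the entire segment $\{\phi_s(z):s\in[0,1]\}$. Exactly as in the proof of Proposition \ref{impot}, the straight-line $d_\varphi$-length between any two points $\phi_s(z),\phi_t(z)$ of the segment is bounded by the constant $R'$ of (\ref{confusing}), so $d_\varphi(\phi_s(z),\phi_t(z))<R'$ for all $s,t,z$. Combining $\varphi'(r)\ge\frac{1}{2\tau(r)}$ from Lemma \ref{key-1} with the relation $d_\varphi\gtrsim d_\tau$ shows the segment has bounded $d_\tau$-length, and then the Lipschitz bound (\ref{comparable}) on $\tau$ gives the multiplicative comparability $\tau(\phi_s(z))\approx\tau(\phi(z))$, uniformly in $s$ and $z$. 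In particular $|\phi(z)-\psi(z)|<R'/\max(\varphi'(|\phi(z)|),\varphi'(|\psi(z)|))\lesssim\tau(\phi_s(z))$, whence
\begin{align*}
|\phi_s(z)-\phi_t(z)|=|s-t|\,|\phi(z)-\psi(z)|\lesssim|s-t|\,\tau(\phi_s(z)),
\end{align*}
and likewise $\rho_\tau(\phi_s(z),\phi_t(z))\le d_\tau(\phi_s(z),\phi_t(z))\lesssim|s-t|$. Hence there is $\eta>0$, depending only on the structural constants and $R'$, such that for $|s-t|<\eta$ the pair $\phi_s(z),\phi_t(z)$ satisfies both hypotheses of Lemma \ref{inequality}.

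With these in hand the continuity estimate runs parallel to (\ref{integralE1}) in the proof of Theorem \ref{mainresult2}, the role played there by the smallness of $\rho_\tau$ on the set $E$ now being played by the smallness of $\rho_\tau(\phi_s(z),\phi_t(z))$ for $|s-t|$ small. For $\|f\|_p\le1$, applying Lemma \ref{inequality} to the pair $\phi_s(z),\phi_t(z)$ gives
\begin{align*}
|f(\phi_s(z))-f(\phi_t(z))|^pe^{-\varphi(z)}\lesssim|s-t|^p\,\frac{\omega(z)}{\omega(\phi_s(z))}\frac{1}{\tau(\phi_s(z))^2}\int_{D(\delta\tau(\phi_s(z)))}|f(\xi)|^pe^{-\varphi(\xi)}\,dA(\xi).
\end{align*}
Integrating in $z$, interchanging the order of integration by Fubini, and using the uniform comparability of $\tau$ to replace $\tau(\phi_s(z))$ by $\tau(\xi)$, the inner integral becomes $\tau(\xi)^{-2}\,\omega^{-1}[\omega dA]\circ\phi_s^{-1}(D(\delta\tau(\xi)))$, which by (\ref{opnorm}) is $\lesssim\|C_{\phi_s}\|$. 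This yields $\|(C_{\phi_s}-C_{\phi_t})f\|_p^p\lesssim|s-t|^p\,\|C_{\phi_s}\|\,\|f\|_p^p$, so $\|C_{\phi_s}-C_{\phi_t}\|\lesssim|s-t|$ whenever $|s-t|<\eta$, with constant uniform in $s,t$. Since $\eta$ is independent of the base point, this local Lipschitz bound holds at every $s\in[0,1]$ and gives continuity of $s\mapsto C_{\phi_s}$ throughout $[0,1]$; hence $C_\phi=C_{\phi_0}$ and $C_\psi=C_{\phi_1}$ lie in the same path component.

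The step I expect to be the genuine obstacle is not the difference estimate itself, which is a routine adaptation of (\ref{integralE1}), but the passage from the pointwise hypothesis $d_\varphi(\phi(z),\psi(z))<R$ to the uniform control needed everywhere along the segment: the comparability $\tau(\phi_s(z))\approx\tau(\phi(z))$ and the uniform Carleson bound $\sup_s\|C_{\phi_s}\|<\infty$. For the latter one must check that $\phi_s^{-1}(D(\delta\tau(\xi)))$ sits inside a fixed dilate of a Carleson box over $\phi$ and that a box of bounded-but-not-small radius is covered by a controlled number of admissible boxes. Finally, for $0<p<1$ the operator norm is only a quasi-norm, but the bound $\|(C_{\phi_s}-C_{\phi_t})f\|_p\lesssim|s-t|\,\|f\|_p$ obtained above is insensitive to this and yields continuity in exactly the same manner.
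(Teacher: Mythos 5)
Your proposal is correct and follows essentially the same route as the paper: the linear homotopy $\phi_s=(1-s)\phi+s\psi$, boundedness of each $C_{\phi_s}$ via Proposition \ref{impot} and (\ref{suffibddwithoutU}), and a norm-continuity estimate obtained from the mean value theorem combined with the derivative submean value inequality, Fubini, the Carleson bound (\ref{opnorm}), and the key pointwise bound $|\phi(z)-\psi(z)|\max(\varphi'(|\phi(z)|),\varphi'(|\psi(z)|))\lesssim R'$ from (\ref{need}). The only difference is packaging: you route the pointwise estimate through Lemma \ref{inequality} (which is itself proved by exactly that mean-value argument), whereas the paper redoes the computation directly with the intermediate point $\phi_u$.
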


\begin{proof}
Suppose $C_\phi$ and $C_\psi$ are bounded operators on $A^p(\omega)$. By Proposition \ref{impot} and (\ref{suffibddwithoutU}) we can tell that the composition operator $C_{\phi_s}$ is also bounded for all $s\in[0,1]$. Now, we show that there exists a continuous path $\mathbf{t}:[0,1]\mapsto C_{\phi_t}$ in $\mathcal{C}(A^p(\omega))$ where $\phi_t=(1-t)\phi+t\psi$, i.e.
\begin{align*}
\lim_{t\rightarrow s}\|C_{\phi_t}-C_{\phi_s}\|\rightarrow0.
\end{align*}
By the mean value theorem, the inequality (\ref{integralE1}) and (\ref{opnorm}), we have
\begin{align}\label{goal4}
&\int_{\D}|f\circ\phi_t-f\circ\phi_s|^pe^{-\varphi}dA\nonumber\\
&\lesssim\int_{\D}|f'(\phi_u)||\phi_t-\phi_s|^pe^{-\varphi}dA\nonumber\\
&\lesssim\int_{\D}\frac{|\phi_t(z)-\phi_s(z)|^p}{\tau(\phi_u(z))^{2+p}}e^{\varphi(\phi_u(z))-\varphi(z)}\int_{D(\delta\tau(\phi_u(z)))}|f(\xi)|^pe^{-\varphi(\xi)}dA(\xi)dA(z)\nonumber\\
&\lesssim\sup_{z\in\D}\frac{|\phi_t(z)-\phi_s(z)|^p}{\tau(\phi_u(z))^{p}}\int_{\D}|f(\xi)|^p\frac{e^{-\varphi(\xi)}}{\tau(\xi)^2}\int_{\phi_u^{-1}(D(\delta\tau(\xi)))}
\frac{e^{-\varphi(z)}}{e^{-\varphi(\phi_u(z))}}dA(z)dA(\xi)\nonumber\\
&\lesssim\sup_{z\in\D}\frac{|\phi_t(z)-\phi_s(z)|^p}{\tau(\phi_u(z))^{p}}\|C_{\phi_u}\|.
\end{align}
 Therefore, using the fact $|\phi_u(z)|\leq\max(|\phi(z)|,|\psi(z)|)$ and (\ref{need}) of Lemma \ref{setinclus2}, there is $R'>0$ such that
\begin{align*}
\frac{|\phi_t(z)-\phi_s(z)|}{\tau(\phi_u(z))}&\lesssim|\phi_t(z)-\phi_s(z)|\varphi'(|\phi_u(z)|) \\&\lesssim|(s-t)(\phi(z)-\psi(z))|\max(\varphi'(|\phi(z)|),\varphi'(|\psi(z)|))\\&\lesssim R'|s-t|\longrightarrow0
\end{align*}
as $t\rightarrow s$. Thus, (\ref{goal4}) converges to $0$ as $t\rightarrow s$ since $\|C_{\phi_u}\|$ is uniformly bounded.
\end{proof}

\bibliographystyle{amsplain}

\begin{thebibliography}{999999}

\bibitem{BDK} A. Borichev, R. Dhuez and K. Kellay,
\textit{Sampling and Interpolation in large Bergman and Fock spaces},
J. Funct. Anal. {\bf 242} (2007), 563-606.

\bibitem{CM} C.C. Cowen, M.D. MacCluer,
\textit{Composition Operators on Spaces of Analytic Functions},
CRC Press, Boca Raton, (1995).

\bibitem{CP} O. Constantin, J. A. Pel\'aez,
\textit{Integral Operators, Embedding Theorems and a Littlewood-Paley Formula on Weighted Fock Spaces},
J. Geom. Aanl., {\bf 26} (2016), 1109-1154.

\bibitem{D} G. M. Dall'Ara,
\textit{Pointwise estimates of weighted Bergman kernels in several complex variables},
Advances in Math., {\bf 285} (2015), 1706-1740.

\bibitem{HLS} Z. Hu, X. Lv, A. P. Schuster,
\textit{Bergman spaces with exponential weights},
J. Funct. Anal. {\bf 276} (2019), 1402-1429.

\bibitem{KM} T. Kriete and B. MacCluer,
\textit{Composition Operators on Large Weighted Bergman Spaces}, Indiana Univ. Math. J. {\bf 41} No. 3 (1992), 755-788.


\bibitem{M} J. Moorhouse,
\textit{Compact differences of composition operators},
J. Funct. Anal. {\bf 219} (2005), 70-92.

\bibitem{MMO} N. Marco, X. Massaneda, J. Ortega-Cerda,
\textit{Interpolating and sampling sequences for entire functions},
Geom. Funct. Anal. {\bf 13} (2003), 862-914.


\bibitem{MOZ} B. MacCluer, S. Ohno, R. Zhao,
\textit{Topological structure of the space of composition operators on $H^\infty$},
Integral Equations and Operator Theory {\bf 12} (1989), 725-738.

\bibitem{MS} B. MacCluer, J. H. Shapiro,
\textit{Angular derivatives and compact composition operators on the Hardy and Bergman spaces},
Canad. J. Math. {\bf 38} (1986), 878-906.

\bibitem{O}
V. L. Oleinik,
\textit{Embedding Theorems for weighted classes of Harmonic and Analytic functions}, J. Soviet Math. {\bf 9} (1978), 228-243.


\bibitem{P}
I. Park,
\textit{The Weighted Composition Operators on the Large Weighted Bergman Spaces}, Complex Anal. Oper. Theory. {\bf 13} (2019), 223-239.

\bibitem{PP}
J. Pau and J. A. Pel\'aez,
\textit{Embedding theorems and integration operators on Bergman spaces with rapidly
decreasing weights}, J. Funct. Anal. {\bf 259} (2010), 2727-2756.

\bibitem{SS}
J. H. Shapiro and C. Sundberg,
\textit{Isolation amongst the composition operators}, Pacific. J. Math. {\bf 145} (1990), 117-152.

\end{thebibliography}

\end{document}